\newtheorem{theorem}{Theorem}[section]
\newtheorem{proposition}{Proposition}[section]
\newtheorem{lemma}{Lemma}[section]
\newtheorem{corollary}{Corollary}[section]
\newcommand\mydef{\mathrel{\stackrel{\makebox[0pt]{\mbox{\small def}}}{=}}}
\begin{document}


\makeatletter
\def\sommaire{\@restonecolfalse\if@twocolumn\@restonecoltrue\onecolumn
\fi\chapter*{Sommaire\@mkboth{SOMMAIRE}{SOMMAIRE}}
  \@starttoc{toc}\if@restonecol\twocolumn\fi}
\makeatother

\makeatletter
\def\thebibliographie#1{\chapter*{Bibliographie\@mkboth
  {BIBLIOGRAPHIE}{BIBLIOGRAPHIE}}\list
  {[\arabic{enumi}]}{\settowidth\labelwidth{[#1]}\leftmargin\labelwidth
  \advance\leftmargin\labelsep
  \usecounter{enumi}}
  \def\newblock{\hskip .11em plus .33em minus .07em}
  \sloppy\clubpenalty4000\widowpenalty4000
  \sfcode`\.=1000\relax}
\let\endthebibliography=\endlist
\makeatother

\makeatletter
\def\references#1{\section*{R\'ef\'erences\@mkboth
  {R\'EF\'ERENCES}{R\'EF\'ERENCES}}\list
  {[\arabic{enumi}]}{\settowidth\labelwidth{[#1]}\leftmargin\labelwidth
  \advance\leftmargin\labelsep
  \usecounter{enumi}}
  \def\newblock{\hskip .11em plus .33em minus .07em}
  \sloppy\clubpenalty4000\widowpenalty4000
  \sfcode`\.=1000\relax}

\let\endthebibliography=\endlist
\makeatother

\newcommand*\wbar[1]{%
   \hbox{%
     \vbox{%
       \hrule height 0.5pt 
       \kern0.5ex
       \hbox{%
         \kern-0.25em
         \ensuremath{#1}%
         \kern-0.1em
       }%
     }%
   }%
} 

\def\lunloc#1#2{L^1_{loc}(#1 ; #2)}
\def\bornva#1#2{L^\infty(#1 ; #2)}
\def\bornlocva#1#2{L_{loc}^\infty(#1 ;\penalty-100{#2})}
\def\integ#1#2#3#4{\int_{#1}^{#2}#3d#4}
\def\reel#1{\R^#1}
\def\norm#1#2{\|#1\|_{#2}}
\def\normsup#1{\|#1\|_{L^\infty}}
\def\normld#1{\|#1\|_{L^2}}
\def\nsob#1#2{|#1|_{#2}}
\def\normbornva#1#2#3{\|#1\|_{L^\infty({#2};{#3})}}
\def\refer#1{~\ref{#1}}
\def\refeq#1{~(\ref{#1})}
\def\ccite#1{~\cite{#1}}
\def\pagerefer#1{page~\pageref{#1}}
\def\referloin#1{~\ref{#1} page~\pageref{#1}}
\def\refeqloin#1{~(\ref{#1}) page~\pageref{#1}}
\def\suite#1#2#3{(#1_{#2})_{#2\in {#3}}}
\def\ssuite#1#2#3{\hbox{suite}\ (#1_{#2})_{#2\in {#3}}}
\def\longformule#1#2{
\displaylines{
\qquad{#1}
\hfill\cr
\hfill {#2}
\qquad\cr
}
}
\def\inte#1{
\displaystyle\mathop{#1\kern0pt}^\circ
}
\def\sumetage#1#2{\sum_{\substack{{#1}\\{#2}}}}
\def\limetage#1#2{\lim_{\substack{{#1}\\{#2}}}}
\def\infetage#1#2{\inf_{\substack{{#1}\\{#2}}}}
\def\maxetage#1#2{\max_{\substack{{#1}\\{#2}}}}
\def\supetage#1#2{\sup_{\substack{{#1}\\{#2}}}}
\def\prodetage#1#2{\prod_{\substack{{#1}\\{#2}}}}

\def\convm#1{\mathop{\star}\limits_{#1}
}
\def\vect#1{
\overrightarrow{#1}
}
\def\Hd#1{{\mathcal H}^{d/2+1}_{1,{#1}}}

\def\derconv#1{\partial_t#1 + v\cdot\nabla #1}
\def\esptourb{\sigma + L^2(\R^2;\R^2)}
\def\tourb{tour\bil\-lon}



\newcommand{\beq}{\begin{equation}}
\newcommand{\eeq}{\end{equation}}
\newcommand{\ben}{\begin{eqnarray}}
\newcommand{\een}{\end{eqnarray}}
\newcommand{\beno}{\begin{eqnarray*}}
\newcommand{\eeno}{\end{eqnarray*}}

\def\cA{{\mathcal A}}
\def\cB{{\mathcal B}}
\def\cC{{\mathcal C}}
\def\cD{{\mathcal D}}
\def\cE{{\mathcal E}}
\def\cF{{\mathcal F}}
\def\cG{{\mathcal G}}
\def\cH{{\mathcal H}}
\def\cI{{\mathcal I}}
\def\cJ{{\mathcal J}}
\def\cK{{\mathcal K}}
\def\cL{{\mathcal L}}
\def\cM{{\mathcal M}}
\def\cN{{\mathcal N}}
\def\cO{{\mathcal O}}
\def\cP{{\mathcal P}}
\def\cQ{{\mathcal Q}}
\def\cR{{\mathcal R}}
\def\cS{{\mathcal S}}
\def\cT{{\mathcal T}}
\def\cU{{\mathcal U}}
\def\cV{{\mathcal V}}
\def\cW{{\mathcal W}}
\def\cX{{\mathcal X}}
\def\cY{{\mathcal Y}}
\def\cZ{{\mathcal Z}}

\def\virgp{\raise 2pt\hbox{,}}
\def\cdotpv{\raise 2pt\hbox{;}}
\def\eqdef{\buildrel\hbox{\footnotesize d\'ef}\over =}
\def\eqdefa{\buildrel\hbox{\footnotesize def}\over =}
\def\Id{\mathop{\rm Id}\nolimits}
\def\limf{\mathop{\rm limf}\limits}
\def\limfst{\mathop{\rm limf\star}\limits}
\def\sgn{\mathop{\rm sgn}\nolimits}
\def\RE{\mathop{\Re e}\nolimits}
\def\IM{\mathop{\Im m}\nolimits}
\def\im {\mathop{\rm Im}\nolimits}
\def\Sp{\mathop{\rm Sp}\nolimits}
\def\C{\mathop{\mathbb C\kern 0pt}\nolimits}
\def\DD{\mathop{\mathbb D\kern 0pt}\nolimits}
\def\EE{\mathop{\mathbb E\kern 0pt}\nolimits}
\def\K{\mathop{\mathbb K\kern 0pt}\nolimits}
\def\N{\mathop{\mathbb  N\kern 0pt}\nolimits}
\def\Q{\mathop{\mathbb  Q\kern 0pt}\nolimits}
\def\R{{\mathop{\mathbb R\kern 0pt}\nolimits}}
\def\SS{\mathop{\mathbb  S\kern 0pt}\nolimits}
\def\St{\mathop{\mathbb  S\kern 0pt}\nolimits}
\def\Z{\mathop{\mathbb  Z\kern 0pt}\nolimits}
\def\ZZ{{\mathop{\mathbb  Z\kern 0pt}\nolimits}}
\def\H{{\mathop{{\mathbb  H\kern 0pt}}\nolimits}}
\def\PP{\mathop{\mathbb P\kern 0pt}\nolimits}
\def\TT{\mathop{\mathbb T\kern 0pt}\nolimits}
 \def\L {{\rm L}}

\def\h {{\rm h}}
\def\v {{\rm v}}

\newcommand{\ds}{\displaystyle}
\newcommand{\la}{\lambda}
\newcommand{\hn}{{\bf H}^n}
\newcommand{\hnn}{{\mathbf H}^{n'}}
\newcommand{\ulzs}{u^\lam_{z,s}}
\def\bes#1#2#3{{B^{#1}_{#2,#3}}}
\def\pbes#1#2#3{{\dot B^{#1}_{#2,#3}}}
\newcommand{\ppd}{\dot{\Delta}}
\def\psob#1{{\dot H^{#1}}}
\def\pc#1{{\dot C^{#1}}}
\newcommand{\Hl}{{{\mathcal  H}_\lam}}
\newcommand{\fal}{F_{\al, \lam}}
\newcommand{\Dh}{\Delta_{{\mathbf H}^n}}
\newcommand{\car}{{\mathbf 1}}
\newcommand{\X}{{\mathcal  X}}
\newcommand{\fgl}{F_{\g, \lam}}

\newcommand{\andf}{\quad\hbox{and}\quad}
\newcommand{\with}{\quad\hbox{with}\quad}

\def\beginproof {\noindent {\it Proof. }}
\def\endproof {\hfill $\Box$}


\def\vp{{\underline v}}
\def\presspO{{{\underline p}_0}}
\def\presspun{{{\underline p}_1}}
\def\wp{{\underline w}}
\def\wpe{{\underline w}^{\e}}
\def\vapp{v_{app}^\e}
\def\vapph{v_{app}^{\e,h}}
\def\vbar{\overline v}
\def\barEE{\underline{\EE}}


\def\demo{d\'e\-mons\-tra\-tion}
\def\dive{\mathop{\rm div}\nolimits}
\def\curl{\mathop{\rm curl}\nolimits}
\def\cdv{champ de vec\-teurs}
\def\cdvs{champs de vec\-teurs}
\def\cdvdivn{champ de vec\-teurs de diver\-gence nul\-le}
\def\cdvdivns{champs de vec\-teurs de diver\-gence
nul\-le}
\def\stp{stric\-te\-ment po\-si\-tif}
\def\stpe{stric\-te\-ment po\-si\-ti\-ve}
\def\reelnonentier{\R\setminus{\bf N}}
\def\qq{pour tout\ }
\def\qqe{pour toute\ }
\def\Supp{\mathop{\rm Supp}\nolimits\ }
\def\coinfty{in\-d\'e\-fi\-ni\-ment
dif\-f\'e\-ren\-tia\-ble \`a sup\-port com\-pact}
\def\coinftys{in\-d\'e\-fi\-ni\-ment
dif\-f\'e\-ren\-tia\-bles \`a sup\-port com\-pact}
\def\cinfty{in\-d\'e\-fi\-ni\-ment
dif\-f\'e\-ren\-tia\-ble}
\def\opd{op\'e\-ra\-teur pseu\-do-dif\-f\'e\-ren\tiel}
\def\opds{op\'e\-ra\-teurs pseu\-do-dif\-f\'e\-ren\-tiels}
\def\edps{\'equa\-tions aux d\'e\-ri\-v\'ees
par\-tiel\-les}
\def\edp{\'equa\-tion aux d\'e\-ri\-v\'ees
par\-tiel\-les}
\def\edpnl{\'equa\-tion aux d\'e\-ri\-v\'ees
par\-tiel\-les non li\-n\'e\-ai\-re}
\def\edpnls{\'equa\-tions aux d\'e\-ri\-v\'ees
par\-tiel\-les non li\-n\'e\-ai\-res}
\def\ets{espace topologique s\'epar\'e}
\def\ssi{si et seulement si}

\def\pde{partial differential equation}
\def\iff{if and only if}
\def\stpa{strictly positive}
\def\ode{ordinary differential equation}
\def\coinftya{compactly supported smooth}


  \title
{ The Calder\'on Problem in the $L^p$ Framework on Riemann Surfaces
  }

\numberwithin{equation}{section}

\author[Y. Ma]{Yilin Ma}
\address[Y. Ma]%
{F07-CARSLAW BUILDING, THE UNIVERSITY OF SYDNEY, CHIPPENDALE NSW, AUSTRALIA}
\email{K.Ma@maths.usyd.edu.au}

\begin{abstract}
The purpose of this article is to extend the uniqueness results for the two dimensional Calder\'on problem to unbounded potentials on general geometric settings. We prove that the Cauchy data sets for Schr\"odinger equations uniquely determines potentials in $L^{p}$ for $p> 4/3$. In doing so, we first recover singularities of the potential, from which point a $L^2$-based method of stationary phase can be applied. Both steps are done via constructions of complex geometric optic solutions and Carleman estimates.
\end{abstract}

\maketitle

\noindent {\sl Keywords:}  Calder\'on problems, 
Carleman estimates, Riemann surfaces

\vskip 0.2cm
\noindent {\sl AMS Subject Classification (2020):}    35R30  \
\setcounter{equation}{0}

\section{Introduction}
Let $( M_0, g )$ be a compact Riemannian manifold with smooth boundary $\partial M_0$ and dimension $n \geq 2$. Suppose that $V$ is a function in $L^{p}(M_0)$ for $p > 1$, and that $0$ is not a Dirichlet eigenvalue of $\Delta_{g}+ V$, then the famous Calder\'on problem for the Schr\"odinger equation
\begin{alignat}{2} \label{schoequation}
\begin{split}
\begin{cases}
(\Delta_{g} + V)u = 0 &\ \text{in} \ M_0, \\
\hspace*{15.75mm} u = f  &\ \text{on} \ \partial M_0
\end{cases}
\end{split}
\end{alignat}
asks whether or not the Dirichlet-Neumann map
\begin{alignat*}{2}
\begin{split}
\Lambda:
\begin{cases}
H^{1/2}(\partial M_0) \rightarrow H^{-1/2}( \partial M_0), \\ 
\hspace*{16.5mm} f \mapsto {\partial_{\nu}u_{f}}_{|_{\partial M_0}} 
\end{cases}
\end{split}
\end{alignat*}
uniquely determines the potential $V$, where $\nu$ is the outward pointing unit normal vector field on $\partial M_0$ and $u_f$ solves (\ref{schoequation}) with Dirichlet condition $f$. If $M_0 = \Omega$ is a bounded domain in $\mathbb{R}^{n}$ with the Euclidean metric, $n \geq 3$ and $V \in \mathcal{C}^{\infty}(\overline{\Omega})$, then the pioneering breakthrough accomplished in \cite{calderonfirstresult} by J. Sylvester and G. Uhlmann using the method of Complex Geometric Optic (CGO) solutions gave the positive answer. Since them, a considerable number of results towards this direction have appeared in the literature, with the method of CGO solutions becoming a standard tool in the subject. For dimension $n \geq 3$, D. Dos Santos Ferreira, C.E. Kenig, M. Salo and G. Uhlmann in \cite{AMCalderon} solved the Calder\'on problem with smooth potentials on certain admissible Riemannian manifolds which have at least one Euclidean direction. In the $L^{p}$ framework, S. Chanillo solved the Calder\'on problem for $V \in L^{n/2}$ on Euclidean bounded domains in \cite{Ln/2Euclidean} and Ferreira-Kenig-Salo on admissible manifolds in \cite{Ln/2admissible}. One could also consider the much related {\it partial data} problem, where one makes measurement on a specific open subset $\Gamma \subset \partial M_0$ instead of the entire boundary. All of the results mentioned above have their partial data counterparts, see \cite{partialdataeuclidean,Ln/2partialdataeuclidean,Ln/2partialdatamanifold}.  \par  
It is known that the two dimensional case is formally determined and thus notably difficult, with the first uniqueness result by A. Nachman in \cite{nachmancalderon} for a bounded domain $\Omega \subset \mathbb{C}$. For a potential $V \in W^{2,p}(\Omega)$, $p > 2$, the successful implementation of CGO solutions was due to Bukgheim in \cite{bukcalderon}. This was improved to $V \in L^{p}(\Omega)$ for $p > 2$ in \cite{L2calderon} by E. Bl\aa sten, O.Y. Imanuvilov and M. Yamamoto, and later on E. Bl{\aa}sten, L. Tzou and J.-N. Wang in \cite{L4/3calderon} did the case of $V \in L^{p}(\Omega)$ for $p> 4/3$. Not as much is known if $M_0$ is a compact Riemann surface with smooth boundary. In this case, L. Tzou and C. Guillarou in \cite{leocalderon} extended Bukgheim's method to solve the Calder\'on problem for a potential $V \in W^{2,p}(M_0)$ with $p>2$ and partial data. However, their proof relied critically on the fact that $V$ is continuous. For the partial data problem in Euclidean geometry,  O.Y. Imanuvilov, G. Uhlmann and M. Yamamoto obtained this result for $V \in W^{2,p}(\Omega)$, $p >2$ in \cite{2dpartialdataold,2dpartialdatanew}.  \par 
To this day there has not been any work in establishing uniqueness for the Calder\'on problem on compact Riemann surfaces with smooth boundaries for unbounded potentials. Since the direct problem for the Schr\"odinger equation is well-posed for all $V \in L^{p}(M_0)$, $p>1$, it is reasonable to ask whether the inverse problem can be solved in this range as well. In this paper we take care of the cases for $p > 4/3$. It remains an interesting question to fill the gap for $p \in {]}1, 4/3{]} $. Our main result, formulated in terms of the graph of the Dirichlet-Neumann map, is thus the following:
\begin{theorem} \label{main theorem}
Let $(M_0,g)$ be a compact Riemann surface with smooth boundary $\partial M_0$. Assume that $V_1$ and $V_2$ are two complex valued functions in $L^{p}(M_0)$ for $p>4/3$, such that their corresponding Cauchy data sets
\begin{alignat*}{2}
\mathcal{C}_{j} \ \mydef \ \{ ( u_{|_{\partial M_0}}, \partial_{\nu}  u_{|_{\partial M_0}} ) \ / \ u \in H^{1}(&M_0),  \ (\Delta_{g} + V)u = 0 \} \\
& \subset H^{1/2}(\partial M_0) \times H^{-1/2}(\partial M_0), \ \ j = 1,2
\end{alignat*}
satisfy $\mathcal{C}_1 = \mathcal{C}_2$, then $V_1 = V_2$.
\end{theorem}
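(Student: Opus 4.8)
The plan is to run the classical Bukhgeim-type scheme for the two-dimensional Calder\'on problem, but in a function-space framework robust enough to tolerate $L^p$ potentials, and to split the final extraction of $V_1-V_2$ into a ``recovery of singularities'' step followed by an $L^2$-flavoured stationary phase, exactly as the abstract advertises. First, since the direct problem for $(\Delta_g+V_j)u=0$ is well posed for $V_j\in L^p$, $p>1$, the hypothesis $\mathcal C_1=\mathcal C_2$ translates, through Green's formula, into the interior orthogonality identity
\[
\int_{M_0}(V_1-V_2)\,u_1u_2\,dV_g=0
\]
for all $u_1,u_2\in H^1(M_0)$ solving $(\Delta_g+V_1)u_1=0$ and $(\Delta_g+V_2)u_2=0$ respectively. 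The Cauchy-data (graph) formulation is convenient here: it bypasses both any non-eigenvalue assumption and any separate boundary determination of $V_j$, which is delicate for rough potentials. One only has to ensure that $u_1u_2$ and $(V_1-V_2)u_1u_2$ lie in $L^1(M_0)$, which is built into the mapping estimates for the CGO solutions below.

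Next, fix an interior point $p_0$. Using density of holomorphic Morse functions, pick $\Phi=\varphi+i\psi\colon M_0\to\C$ holomorphic and Morse with a nondegenerate critical point at $p_0$ (and controlled critical set elsewhere). Since $\pm\varphi=\pm\operatorname{Re}\Phi$ are harmonic they are limiting Carleman weights, so, writing $\Delta_g=4e^{-2\rho}\partial_z\partial_{\bar z}$ in isothermal coordinates $z=x+iy$, one derives semiclassical Carleman estimates on $M_0$ in the $L^p$--$W^{s,p}$ scale carrying a definite gain in $h$ --- enough to absorb the zeroth-order term $V_ju$ despite $\|V_j\|_{L^p}$ not being small. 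Coupled with solvability of the relevant $\bar\partial$-equations on the surface for the amplitudes, this produces CGO solutions
\[
u_1=e^{\Phi/h}(a_1+r_1),\qquad u_2=e^{-\overline{\Phi}/h}(a_2+r_2),
\]
with $a_1,a_2$ holomorphic amplitudes adapted to the Morse structure (so that, under stationary phase, only $p_0$ contributes) and remainders $r_1,r_2$ small in the norms needed to pair against $V_1-V_2\in L^p$. Since $\Phi-\overline\Phi=2i\psi$, feeding these into the identity of the first step yields, with $V:=V_1-V_2$,
\[
\int_{M_0}V\,e^{2i\psi/h}\bigl(a_1a_2+h(\cdots)\bigr)\,dV_g=(\text{error}),
\]
an oscillatory integral whose phase $\psi$ has a single nondegenerate (saddle) critical point at $p_0$.

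Now split $V=V^\sharp+V^\flat$ with $V^\flat$ continuous and $\|V^\sharp\|_{L^p}$ arbitrarily small. For the smooth part, ordinary stationary phase around the saddle $p_0$ gives a contribution of size $\sim c\,h\,V^\flat(p_0)$ with $c\neq0$. The real issue --- and the reason one needs $p>4/3$ rather than all $p>1$ --- is to control the rough contribution $\int V^\sharp e^{2i\psi/h}(\cdots)$ and the remainder errors against this leading order $h$: a crude H\"older bound by $\|V^\sharp\|_{L^p}\|a_1a_2\|_{L^{p'}}$ loses, since modulus-one oscillation contributes nothing to an $L^{p'}$ norm. The remedy is to exploit the oscillation quantitatively through an $L^2$-based stationary phase: pairing against the metaplectic-type operator attached to a nondegenerate quadratic phase is $L^2$-bounded with the correct power of $h$, and when combined with the Carleman/mapping estimates --- and, in the sub-$L^2$ range $4/3<p\le2$, with a more careful treatment letting the singular part of the potential enter the construction rather than an $L^2$ estimate it cannot satisfy --- one shows that the rough term and all remainder errors are $o(h)$. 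Dividing by $h$ and letting $h\to0$, then $\|V^\sharp\|_{L^p}\to0$, first forces $V$ to carry no singular part (it lands in a space --- continuous, or a positive-order $L^2$-Sobolev space --- where a genuine $L^2$ stationary phase applies verbatim), and then that cleaner argument gives $V(p_0)=0$. As $p_0$ ranges over the interior, $V\equiv0$, i.e.\ $V_1=V_2$.

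The main obstacle is precisely the tension in the third step between the low integrability of $V$ and the need for an honest $h$-gain in the oscillatory estimates: it dictates a careful choice of the spaces in which the CGO remainders live and in which the Carleman estimates gain, and a decomposition of every error term so that each piece is controlled either by smallness of $\|V^\sharp\|_{L^p}$ or by a power of $h$ --- and the endpoint $p=4/3$ is exactly where this bookkeeping becomes sharp. By comparison, the geometric infrastructure --- globalizing the Carleman estimates and the CGO construction to $M_0$, solving $\bar\partial$ on the surface, and producing Morse holomorphic phases with critical points at arbitrary interior points --- is by now standard after Guillarmou--Tzou, and I would expect it to be essentially routine relative to the analytic core.
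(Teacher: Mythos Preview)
Your overall architecture matches the paper's: Alessandrini's identity, CGO solutions from a holomorphic phase via Carleman estimates, a separate recovery-of-singularities step pushing $V:=V_1-V_2$ into $L^2$, and then an $L^2$-based stationary phase (the paper's Lemma~5.1 is exactly your ``metaplectic'' observation) to conclude $V=0$ locally. But the implementation differs in ways you should not dismiss as routine.

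First, the phase. You take $\Phi$ Morse with a critical point at $p_0$, \`a la Guillarmou--Tzou. The paper instead chooses $\Phi$ holomorphic with \emph{non-vanishing} derivative everywhere (Gunning--Narasimhan), then manufactures the quadratic phase $\Psi(p;p_0)=(\Phi(p)-\Phi(p_0))^2$. The price is that $\Phi^{-1}(\Phi(p_0))=\{\tilde p_0,\dots,\tilde p_N\}$ is a finite set, not a single point; the other preimages are killed by special holomorphic amplitudes $a(p;p_0)$ and $(1,0)$-forms $b_{j'}(p;p_0)$ satisfying a ``delta'' property across that set (Lemma~2.2). Because $a$ depends holomorphically on $p_0$, it admits a Taylor expansion in $p_0$ with summable $L^\infty$ coefficients (Lemma~2.3), and this device---not Morse amplitudes---is what lets one interchange limits and sums against the non-continuous $V$. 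The paper's introduction says explicitly that the GT argument ``relied critically on the fact that $V$ is continuous,'' which is why this alternative was taken.

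Second, the recovery of singularities (Theorem~1.2, Section~4) is not a byproduct of the quadratic-phase identity but a separate argument using a \emph{second} family of CGOs with \emph{linear} phase $e^{-\frac{i}{2}\Phi\bar\omega}$, $e^{-\frac{i}{2}\bar\Phi\omega}$. Plugging into Alessandrini, multiplying by a Gaussian weight in $\omega$, and integrating yields an approximate Fourier inversion: the principal term converges in $L^1$ to $\mathbf 1_{\Omega_0}V|g|^{1/2}$, while the remainder lands in $L^2(\Omega_0)$ thanks to the $|\omega|^{-1}$-type Carleman gain. This gives $V\in L^2_{loc}$ directly; there is no $V=V^\sharp+V^\flat$ splitting.

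Third, in the final identification (Section~5) the first-order remainder terms $I_{1,0},I_{0,1}$ are \emph{not} $o(\lambda^{-1})$ on their own. The paper inserts correction terms $Q_{j',\epsilon}^\pm\, b_{j'}$ into the definition of $u_1,v_1$, with $Q_{j',\epsilon}^\pm$ chosen a posteriori as smooth approximations to the local values of $T^\star(V_1 a)$, $\bar T^\star(V_2\bar a)$, so that after the $L^2$ stationary phase their limits cancel up to $o_{L^2,\epsilon}(1)$. This cancellation device is specific and does not appear in your sketch; without it (or something equivalent) the $k+k'=1$ terms obstruct the conclusion.
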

 \par 
\noindent To prove Theorem \ref{main theorem} we will extend the strategy of \cite{L4/3calderon} which follows the philosophy of Bukgheim. There it was important to know a priori that the difference $V_1 - V_2$ already lives in $L^{2}(M_0)$ in order to apply a generalised method of stationary phase. Hence we also prove the following
\begin{theorem} \label{improve regularity theorem}
Let $(M_0,g)$ be a compact Riemann surface with smooth boundary $\partial M_0$. Assume that $V_1$ and $V_2$ are two complex valued functions in $L^{p}(M_0)$ for $p>4/3$ such that $\mathcal{C}_1 = \mathcal{C}_{2}$, then $V_1 - V_2$ is in $L^{2}(M_0)$.
\end{theorem}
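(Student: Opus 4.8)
The plan is to follow the Bukhgeim-type strategy of \cite{L4/3calderon}, adapted to the Riemann surface setting in the spirit of \cite{leocalderon}, but carefully tracking $L^p$ rather than $L^\infty$ information. Write $V = V_1 - V_2 \in L^p(M_0)$, $p > 4/3$. The integral identity coming from $\mathcal{C}_1 = \mathcal{C}_2$ says that for any $u_1$ solving $(\Delta_g + V_1)u_1 = 0$ and $u_2$ solving $(\Delta_g + \overline{V_2})u_2 = 0$ (or the appropriate adjoint/conjugate equation) one has $\int_{M_0} V\, u_1 u_2 \, dV_g = 0$. Into this identity one feeds complex geometric optic solutions built from a Morse holomorphic function $\Phi = \varphi + i\psi$ on $M_0$: solutions of the form $u_j = e^{\Phi/h}(a_j + r_j)$ where $a_j$ are holomorphic (resp. antiholomorphic) Carleman weights/amplitudes localized near a critical point $p_0$ of $\Phi$, and $r_j = O_{L^2}(h^{\sigma})$ for some $\sigma>0$ are the remainders controlled by Carleman estimates with a gain (as in \cite{leocalderon}, \cite{L4/3calderon}). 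The key point is that $e^{\Phi/h}\overline{e^{\Phi/h}} = e^{2\varphi/h}$ has a nondegenerate max at $p_0$, so the identity localizes, by stationary phase, to a statement about $V$ near $p_0$.

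The first main step is the \emph{recovery of singularities}: one shows that, after multiplying by the right power of $h$ and passing to the limit, the identity forces the "averaged" quantity $\int V \chi\, e^{(2\varphi - 2\varphi(p_0))/h}\, dV_g$ to behave as if $V$ were smooth, i.e. one extracts that a suitable mollification $V_h := V * \text{(scale-}h\text{ kernel)}$ of $V$ (in an isothermal chart near $p_0$) converges, and moreover that $\|V_h\|$ in an appropriate norm is controlled \emph{uniformly in $h$}. Concretely, the CGO remainder estimates available for $V \in L^p$, $p > 4/3$, are only enough to control $\int V u_1 u_2$ up to an error that is $o(h^{-1})$ after one knows the principal term; the cancellation in the identity then upgrades this to an $L^2$-type bound on the low-frequency/mollified part of $V$ at frequency $\sim 1/h$. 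Since $p_0$ is an arbitrary point away from $\partial M_0$ (Morse holomorphic functions with a prescribed critical point exist on $M_0$ by the construction in \cite{leocalderon}), and since the needed estimates are local, one obtains a bound of the form $\sup_{0<h<1}\| V_h \|_{L^2_{loc}(M_0)} < \infty$, or equivalently that $V$ lies in a space like $\dot B^{0}_{2,\infty}$ locally, which together with the a priori $L^p$ membership and interpolation/Sobolev embedding on a surface will give $V \in L^2(M_0)$. Near the boundary one handles the estimate using the partial-data Carleman estimates and the fact that the Cauchy data agree, exactly as in \cite{leocalderon}.

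More precisely, the mechanism by which "$L^p$ data $\Rightarrow$ $L^2$ regularity" is obtained: one tests the identity against CGO solutions with \emph{two} critical points or against a one-parameter family, Fourier-transforming the stationary phase expansion so that the vanishing of $\int V u_1 u_2$ for all admissible $\Phi$ translates into the statement that $\widehat{V\chi}(\xi)$ (in an isothermal chart) is, for $|\xi|$ large, controlled by the CGO error term, which decays like $|\xi|^{-\delta}$ for some $\delta = \delta(p) > 0$ when $p > 4/3$. Squaring and integrating in $\xi$ over the dyadic annulus $|\xi| \sim 1/h$ then gives the uniform $L^2$ bound; summing the dyadic pieces (the low frequencies being harmless since $V \in L^p \subset L^1_{loc}$) yields $V \chi \in L^2$. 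Covering $M_0$ by finitely many such charts completes the proof.

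\emph{The main obstacle.} The delicate point is the remainder estimate: for $V$ merely in $L^p$ with $p>4/3$ (as opposed to $L^\infty$ or $W^{2,p}$), the standard Faddeev/$\bar\partial$-type resolvent bounds that give $r_j = o(1)$ in $L^2$ must be pushed to give a \emph{quantitative} rate $r_j = O(h^{\sigma(p)})$ with $\sigma(p) > 0$ precisely when $p > 4/3$, and this rate must be good enough that, after the $h^{-1}$ blow-up from stationary phase, the error still tends to $0$ and is moreover \emph{summable} over dyadic frequencies to yield an $L^2$ (not just $\dot B^0_{2,\infty}$) conclusion. This requires combining the surface Carleman estimate with a gain (from \cite{leocalderon}) with sharp $L^p \to L^q$ mapping properties of the relevant operators (a Hausdorff–Young / Sobolev-type estimate for the $\bar\partial^{-1}$-operator twisted by the Carleman weight), and it is here that the threshold $4/3$ — the dual Sobolev exponent for which $\bar\partial^{-1}$ gains enough — enters. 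The geometric (non-Euclidean) setting adds the technical burden of working in isothermal coordinates and handling the curvature terms and the nontriviality of $\Phi$, but these are by now routine following \cite{leocalderon}; the genuinely new work is the $L^p$ bookkeeping just described.
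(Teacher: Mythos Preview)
Your proposal has a genuine structural gap: you are using the wrong phase, and with it the wrong asymptotic mechanism. The paper does \emph{not} use a Morse holomorphic weight with a critical point at $p_0$ for Theorem~\ref{improve regularity theorem}. It takes $\Phi$ with \emph{non-vanishing} derivative on $M$ (the Gunning--Narasimhan immersion), and builds CGOs with the \emph{linear} phase $e^{-\frac{i}{2}\Phi\bar\omega}$, $e^{-\frac{i}{2}\bar\Phi\omega}$, so that the principal term in the Alessandrini identity is literally
\[
\int_{M_0} e^{-i\Phi\cdot\omega}\,|a|^2\,V\,d\mathrm{v}_g,
\]
i.e.\ a Fourier transform in the local coordinate $z=\Phi(p)$. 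One then multiplies by $e^{-\epsilon|\omega|^2}e^{i\Phi(p_0)\cdot\omega}$, integrates in $\omega$, and lets $\epsilon\to0$; ordinary Fourier inversion (not stationary phase) recovers $\mathbf 1_{\Omega_0}V|g|^{1/2}$ in $L^1$. The nonlinear dependence of the amplitude $a(p;p_0)$ on $p_0$ is dealt with by a Taylor expansion in $p_0$ (Lemma~\ref{Taylor expansion Lemma}), which reduces everything to the standard inversion theorem term by term. Your stationary-phase scheme at a Morse critical point would instead try to produce pointwise values of $V$, which is circular: the $L^2$-stationary-phase lemma one needs (the analogue of the paper's Lemma in Section~5) already requires $V\in L^2$, which is exactly what you are trying to prove.

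The second gap is in your ``$\widehat{V\chi}(\xi)=O(|\xi|^{-\delta})$ and sum dyadically'' mechanism. In the paper's expansion, the higher remainders $\tilde I_{k,k'}$ with $k+k'\ge 2$ do decay like $|\omega|^{-1-}$ and are handled by Plancherel, essentially as you suggest. But the first-order remainders $\tilde I_{1,0}$, $\tilde I_{0,1}$ only decay like $|\omega|^{-1}$, which is \emph{not} square-integrable in two dimensions, so no amount of dyadic bookkeeping turns them into an $L^2$ bound. The paper instead shows (Lemma~\ref{remainder lemma improve}) that after integrating in $\omega$ these terms converge to an \emph{explicit} function, of the schematic form $R(\chi'V\bar a_\mu)\cdot T^\star(V_1 a_{\mu'})$, and proves this function is in $L^2$ by the Sobolev embedding $W^{1,4/3}\hookrightarrow L^4$ together with H\"older ($L^4\cdot L^4\subset L^2$). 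This is where the threshold $p>4/3$ actually enters, and your outline does not contain this step. Finally, your fallback ``$\dot B^0_{2,\infty}$ plus interpolation gives $L^2$'' is false as stated: $\dot B^0_{2,\infty}\supsetneq L^2$, so a uniform bound on $\|V_h\|_{L^2}$ does not by itself yield $V\in L^2$.
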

Prior to the work of Nachman, Z. Sun and G. Uhlmann in their work \cite{improveregularold} proved various versions of Theorem \ref{improve regularity theorem} on Euclidean geometry using the methods of higher dimensions. Later in \cite{improveregularnew}, V.S. Serov and L. P\"aiv\"arinta made improvements and established statements which are parallel to ours. In either cases, the methods of proof in these works relied on tools which had no obvious analogy for general geometries. Our proof of  Theorem \ref{improve regularity theorem} demonstrates how the idea of Bukgheim can be adapted to obtain the same result on a Riemann surface with smooth boundary. 
\section{Inhomogenous Cauchy-Riemann Problems}

In this section we discuss solutions of the inhomogenous Cauchy-Riemann equation $ \bar{\partial} u = f $ which will be important in the process of constructing Green's operators to various conjugated operators. 
\subsection{Riemann Surfaces}
We begin by establishing some notations on Riemann surfaces. If $M_0$ is a compact Riemann surface with smooth boundary, then we can identify it as the closure of a bounded subset contained in a larger compact Riemann surface $\overline{M}$ with interior $M$ and boundary $\partial M$. The Hodge star operator $\star$ acts on the cotangent bundle $T^{\star}M$ with eigenvalues $i, -i$ and their respective eigenspaces $T^{\star}_{1,0}M$ and $T^{\star}_{0,1}M$. In a holomorphic coordinate $z = x + i y$ one has $T^{\star}_{1,0}M = \mathbb{C} dz$ and $ T^{\star}_{0,1}M = \mathbb{C} d \bar{z} $ where $dz = dx + i dy$ and $d \bar{z} = dx - idy$ and the complexified cotangent bundle admits the splitting $\mathbb{C} T^{\star}M = T^{\star}_{1,0}M \oplus T^{\star}_{0,1}M$. This splitting induces the natural projections $\pi_{1,0}: \mathbb{C}T^{\star}M \rightarrow T^{\star}_{1,0}M$ and $\pi: \mathbb{C} T^{\star} M \rightarrow T^{\star}_{0,1}M$. We then define the Cauchy-Riemann operators as $\partial f  = \pi_{1,0} df$ and $ \bar{\partial} f  = \pi_{0,1} df $ for $ f \in \mathcal{C}^{\infty}(M)$. If we let $\Lambda^{k}M$ denote the real bundle of $k$-forms on $M$ and $\mathbb{C}\Lambda^{k}M$ be its complexfied bundle, then $\partial$ and $\bar{\partial}$ also extend to $\mathbb{C} \Lambda^{1}M \rightarrow \mathbb{C} \Lambda^{2}M$ by setting $\partial(\sigma_{1,0} + \sigma_{0,1} ) = d\sigma_{0,1}$ and $\bar{\partial}(\sigma_{1,0} + \sigma_{0,1}) = d\sigma_{1,0}$ if $\sigma_{1,0} \in T^{\star}_{1,0}M$ and $\sigma_{0,1} \in T^{\star}_{0,1}M$. They satisfy $d = \partial + \bar{\partial}$ and are expressed in local coordinates as $\partial f = \partial_{z} f dz  $ and $ \bar{\partial} f = \partial_{\bar{z}}f d\bar{z} $ for $ \partial_{z} = 2^{-1} (\partial_{x} - i \partial_{y})$ and $\partial_{\bar{z}} = 2^{-1} ( \partial_{x} + i \partial_{y} )$ on functions, as well as $\partial(udz + v d\bar{z}) = \partial_{z} v dz \wedge d \bar{z}$ and $\bar{\partial}(udz + v d\bar{z}) = \partial_{\bar{z}}u d\bar{z} \wedge dz$ for forms. The formal adjoints of $\partial$ and $\bar{\partial}$ are defined by $\partial^{\star} = - \star \bar{\partial} \star$ and $ \bar{\partial} = - \star \partial \star$. The Laplacian is defined by $\Delta_{g} f = 2 \bar{\partial}^{\star} \bar{\partial} = 2 \partial^{\star} \partial $. In local coordinates $z$ we write $dz d\bar{z}$ to denote the flat volume form $2^{-1} i dz \wedge d\bar{z} $. If $M$ has metric $g$ then we let $d\text{v}_{g}$ be the volume form of $M$ with respect to $g$.
\subsection{Inverting the $\bar{\partial}$ Operator} \label{invert dbar}
In this subsection we recall some facts regarding the inhomogenous $\bar{\partial}$ equations. For every $k \in \mathbb{N}$ and $p \in {]}1, \infty{[} $, it was proved in Proposition 2.3 of \cite{reflectionpaper} that there exists a bounded operator 
\begin{alignat}{2} \label{invert d-bar}
\bar{T}: W^{k,p}(M; T_{0,1}^{\star}M) \rightarrow W^{k+1,p}(M)
\end{alignat}
such that $\bar{\partial}\bar{T} = \text{Id} $.  By adapting to the definition of $\bar{\partial}^{\star}$, we can also extend (\ref{invert d-bar}) to obtain bounded right inverses of 
\begin{alignat}{2}
\bar{\partial}^{\star}: W^{k+1,p}(M; T^{\star}_{0,1}M) \rightarrow W^{k,p}(M) \ \ \text{and} \ \ \bar{\partial} : W^{k+1,p}(M ; T^{\star}_{1,0} M) \rightarrow W^{k,p}(M; \Lambda^{2} M) 
\end{alignat}
by setting respectively
\begin{alignat}{2}
\begin{split} \label{second order inverse dbar}
\bar{T}^{\star}: W^{k,p}(M) \rightarrow W^{k+1,p}(M; T_{0,1}^{\star}M): & \hspace*{2mm}  f \longmapsto 2 \bar{\partial} G f, \ \ \text{and} \\[1mm]
\bar{T}: W^{k,p}(M; \Lambda^2 M ) \rightarrow W^{k+1,p}(M; T^{\star}_{1,0}M): & \hspace*{2mm} \sigma \longmapsto - \star \hspace*{0.5mm} T^{\star} \star \sigma
\end{split}
\end{alignat}
where $G: W^{k,p}(M) \rightarrow W^{k+2, p}(M)$ by elliptic regularity is the Dirichlet Green's operator for the Laplacian on $M$. As it is the usual convention, we adopt the same notations $\bar{\partial}$ and $\bar{T}$ for mappings between forms of various orders since they will be obvious from the contexts. The operators $T$ and $T^{\star}$ are to be understood as the complex conjugates of $\bar{T}$ and $\bar{T}^{\star}$ respectively. \par 
Moreover, since $2 \bar{\partial}^{\star} \bar{\partial} G = \text{Id} $ and $\partial^{\star}: W^{k+1,p}(M; T^{\star}_{1,0}M) \rightarrow W^{k,p}(M)$ satisfies $\partial^{\star} =  - \star \bar{\partial} \hspace*{0.5mm} \star$ for $\bar{\partial} : W^{k+1,p}(M; T^{\star}_{1,0}M) \rightarrow W^{k,p}(M; \Lambda^{2}M)$, we have that $\bar{\partial}^{\star} \bar{T}^{\star} = \text{Id}$ and $ \bar{\partial} \bar{T} = \text{Id} $ in (\ref{second order inverse dbar}) as well. We remark that despite the notations, with our constructions the operators $\bar{T}$ and $\bar{T}^{\star}$ in general are not adjoints of one another. \par 
On a bounded domain $\Omega \subseteq \mathbb{C}$, the operator $\bar{T}$ in (\ref{invert d-bar}) has a well-known local analogy given by the operator $\bar{R}$, which is bounded on $W^{k,p}(\Omega) \rightarrow W^{k+1,p}(\Omega)$ and defined by 
\begin{alignat}{2} \label{local d-bar}
\bar{R}f \ \mydef \ \frac{1}{2\pi i}\int_{\Omega} f(\zeta) \hspace*{0.5mm} \frac{d\zeta \wedge d \bar{\zeta}}{\zeta - z},
\end{alignat} 
and in local coordinates (\ref{local d-bar}) solves $\partial_{\bar{z}} \bar{R} = \text{Id}$. It is obvious that $\bar{R}$ can naturally be extended to $W^{k,p}(\Omega;T^{\star}_{0,1}\Omega) \rightarrow W^{k+1, p}(\Omega)$ via identification of $W^{k,p}(\Omega)$ with $W^{k,p}(\Omega ; T^{\star}_{0,1}\Omega)$. As before, we let $R$ denote the complex conjugate of $\bar{R}$.
\par 
The following results were proved in Section 2 of \cite{leoconnection}.
\begin{lemma} \label{decomposition of inverse d-bar operator}
Let $\chi$ and $\chi'$ be smooth cut-off functions on $M$. Assume that $\chi$ is supported on a holomorphic chart $\Omega \subset M$ and $\chi'$ supported on a small open neighbourhood of $\Omega$ and identically $1$ on the support of $\chi$, then there exists integral operators 
\begin{alignat*}{2}
K: W^{k,p}(M; T_{0,1}^{\star}M) \rightarrow \mathcal{C}^{\infty}(M) \ \ \text{and} \ \ L: W^{k,p}(M) \rightarrow  \mathcal{C}^{\infty} (M; T^{\star}_{0,1}M)
\end{alignat*}
respectively with smooth kernels on $M^2$ such that 
\begin{alignat}{2} \label{dbar splitting}
\bar{T} (\chi \sigma) = \chi' \bar{R} (\chi \sigma) + K (\chi \sigma) \ \ \text{and} \ \ \bar{T}^{\star} (\chi f) =  \chi' \bar{R} (|g|^{1/2} \chi f) d\bar{z} + L (|g|^{1/2} \chi f)
\end{alignat}
for all $\sigma \in W^{k,p}(M; T^{\star}_{0,1}M)$ and $f \in W^{k,p}(M)$.
\end{lemma}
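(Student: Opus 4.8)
\noindent\emph{Proof strategy.} The plan is to read each identity in (\ref{dbar splitting}) as the assertion that, once cut off to the chart $\Omega$, the globally constructed right inverse $\bar{T}$ (resp. $\bar{T}^{\star}$) agrees with the explicit local Cauchy transform $\bar{R}$ up to an operator whose Schwartz kernel is supported off the diagonal of $M^{2}$ and is therefore smooth. The mechanism is that $\bar{T}(\chi\sigma)$ and $\chi'\bar{R}(\chi\sigma)$ solve the \emph{same} inhomogeneous $\bar{\partial}$--equation modulo such a manifestly smoothing error; hence their difference is annihilated by the elliptic operator $\bar{\partial}$ modulo $\mathcal{C}^{\infty}$, and elliptic regularity closes the argument. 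The only place the geometry plays a role is through the conformal weight $|g|^{1/2}$ appearing in the second identity.

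For the first identity I would set $K(\chi\sigma)\mydef\bar{T}(\chi\sigma)-\chi'\bar{R}(\chi\sigma)$, the product being formed in a coordinate on a slightly enlarged holomorphic chart containing $\mathrm{supp}\,\chi'$ and then extended by $0$ to $M$; note $K(\chi\sigma)\in W^{k+1,p}(M)$ by (\ref{invert d-bar}) and boundedness of $\bar{R}$. Using $\bar{\partial}\bar{T}=\mathrm{Id}$, the Leibniz rule, the relation $\partial_{\bar{z}}\bar{R}=\mathrm{Id}$ from (\ref{local d-bar}), and the hypothesis $\chi'\equiv 1$ on $\mathrm{supp}\,\chi$, one computes
\[ \bar{\partial}\bigl(K(\chi\sigma)\bigr)=\chi\sigma-\chi'\chi\,\sigma-(\bar{\partial}\chi')\,\bar{R}(\chi\sigma)=-(\bar{\partial}\chi')\,\bar{R}(\chi\sigma), \]
the last equality using $\chi'\chi=\chi$. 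In the chart the operator $\sigma\mapsto(\bar{\partial}\chi')\bar{R}(\chi\sigma)$ has integral kernel a smooth multiple of $\bar{\partial}\chi'(z)\,\chi(\zeta)\,(\zeta-z)^{-1}$; since $\mathrm{supp}(\bar{\partial}\chi')\cap\mathrm{supp}\,\chi=\emptyset$, the Cauchy singularity $z=\zeta$ is never met, so this kernel is $\mathcal{C}^{\infty}$ on $M^{2}$ and the operator is smoothing. Hence $\bar{\partial}K(\chi\sigma)\in\mathcal{C}^{\infty}(M)$; as $\bar{\partial}$ is a first order elliptic (thus hypoelliptic) operator on complex functions, elliptic regularity upgrades $K(\chi\sigma)$ to $\mathcal{C}^{\infty}(M)$ (interior regularity suffices because $\Omega\subset\subset M$, and regularity up to $\partial M$ is inherited from that of $\bar{T}$).

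The second identity I would treat in exactly the same spirit, now applying $\bar{\partial}^{\star}$ (equivalently, using $\Delta_{g}=2\bar{\partial}^{\star}\bar{\partial}$), starting from $\bar{T}^{\star}=2\bar{\partial}G$ and from the isothermal factorization of $\Delta_{g}$ through $\partial_{z}\partial_{\bar{z}}$: that factorization differs from the flat one by the conformal factor $|g|^{1/2}$, which is precisely the weight in front of $\chi f$ in (\ref{dbar splitting}). Expanding $\bar{\partial}^{\star}$ of $\bar{T}^{\star}(\chi f)-\chi'\bar{R}(|g|^{1/2}\chi f)\,d\bar{z}$, every term cancels by $\bar{\partial}^{\star}\bar{T}^{\star}=\mathrm{Id}$ and the mapping property of the Cauchy transform, except one carrying a derivative of $\chi'$, whose kernel is again off-diagonal and hence smooth; elliptic regularity for $\bar{\partial}^{\star}$ then gives $L(|g|^{1/2}\chi f)\in\mathcal{C}^{\infty}(M;T^{\star}_{0,1}M)$.

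The step I expect to be the real obstacle is upgrading ``$K(\chi\sigma)$ and $L(|g|^{1/2}\chi f)$ are smooth for each input'' to ``$K$ and $L$ are integral operators with $\mathcal{C}^{\infty}$ kernels on $M^{2}$'', i.e. making the computation uniform in the input. The clean way is to observe that $\bar{T}$ and $\bar{T}^{\star}$, being right inverses of the elliptic operators $\bar{\partial}$, $\bar{\partial}^{\star}$ from (\ref{invert d-bar})--(\ref{second order inverse dbar}), extend to the full Sobolev scale (or act as properly supported pseudodifferential-type operators), so that the identities above persist for inputs in $\mathcal{E}'(M)$ with smooth right-hand sides; then $K$ and $L$ map $\mathcal{E}'(M)$ continuously into $\mathcal{C}^{\infty}(M)$, which by the Schwartz kernel theorem is equivalent to smoothness of the kernels. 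Alternatively one argues directly with the Schwartz kernels viewed as distributions on $M^{2}$: $\bar{\partial}_{z}$ of the kernel of $K$ is smooth and the adjoint computation controls its $\zeta$-dependence, so its wave front set is empty, and likewise for $L$. Apart from this uniformity point, the only genuinely new ingredient relative to the flat case is the bookkeeping of the factor $|g|^{1/2}$; everything else is a localization of the parametrix construction underlying (\ref{invert d-bar}).
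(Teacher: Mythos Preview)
Your proposal is correct and follows the natural parametrix argument. Note, however, that the paper does not actually prove this lemma: it is quoted from Section~2 of \cite{leoconnection} (the sentence ``The following results were proved in Section~2 of \cite{leoconnection}'' precedes the statement). Your outline reconstructs what the standard proof there amounts to: define $K$ as the difference, compute $\bar{\partial}K(\chi\sigma)=-(\bar{\partial}\chi')\bar{R}(\chi\sigma)$ using $\chi'\chi=\chi$, observe that the disjointness of $\mathrm{supp}(\bar{\partial}\chi')$ and $\mathrm{supp}\,\chi$ makes this a smoothing operator, and then invoke ellipticity of $\bar{\partial}$.

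You also correctly identify the only genuine subtlety --- passing from ``$K(\chi\sigma)\in\mathcal{C}^{\infty}$ for each $\sigma$'' to ``$K$ has a smooth Schwartz kernel'' --- and give the two standard fixes (extension of $\bar{T}$ to the full Sobolev scale plus the Schwartz kernel theorem, or a direct wave-front-set argument on the kernel). Either is adequate here; the first is perhaps closer to how \cite{leoconnection} is organized, since the construction of $\bar{T}$ in \cite{reflectionpaper} already produces an operator bounded on $W^{k,p}$ for all $k\in\mathbb{N}$, and the structure of its kernel (as a parametrix for $\bar{\partial}$) is transparent. Your treatment of the $|g|^{1/2}$ weight in the second identity via the isothermal factorization $\Delta_{g}=-4|g|^{-1/2}\partial_{z}\partial_{\bar{z}}$ is also the right bookkeeping.
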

\noindent In particular, suppose that $\{ \Omega_j \}_{j \geq 0}$ is a finite collection of holomorphic charts in $M$. Let $\{ \chi_{j} \}_{j \geq 0}$ be a partition of unity subordinate to $\{ \Omega_j \}_{j \geq 0}$ such that for every $j \geq 0$, we can choose smooth cut-off function $\chi_{j}'$ that is identically $1$ on the support of $\chi_{j}$. We have by (\ref{dbar splitting}) that
\begin{alignat}{2} \label{global splitting}
\bar{T}\sigma = \sum_{j \geq 0} \chi_j' \bar{R} (\chi_j \sigma ) + K_{j} ( \chi_j \sigma) \ \ \text{and} \ \ \bar{T}^{\star} f = \sum_{j \geq 0} \chi'_{j} \bar{R} ( |g_j|^{1/2} \chi_j f) d\bar{z} + L_{j} (|g_{j}|^{1/2} \chi_j f)
\end{alignat}
for all $ \sigma \in W^{k,p}(M; T^{\star}_{0,1} M) $ and $f \in W^{k,p}(M)$ whose supports are contained in $\bigcup_{j \geq 0} \Omega_{j}$, where $\{ K_j \}_{j \geq 0}$ and $\{ L_j \}_{j \geq 0}$ are operators with smooth kernels.
\subsection{Special Holomorphic Amplitudes} \label{construction subsection special holomorphic amplitude}
To solve the inverse problem, we will also be required to consider special holomorphic functions as well as $1$-forms on $M$ parametrised by an auxiliary variable. \par 
Let us introduce some important constructions which will be used throughout this paper. Suppose that $\tilde{p}_0 $ is an arbitrary point in $M_0$. By the result in \cite{holofunction}, there exists a holomorphic function $\Phi : M \rightarrow \mathbb{C}$ which can be extended to a larger open Riemann surface $M'$ containing $\overline{M}$ with non-vanishing derivative on $M'$ and remains holomorphic there. Without loss of generality we may assume that $\Phi(\tilde{p}_0) = 0$. Thus by the inverse function theorem, for $r>0$ we can choose small neighbourhoods $\tilde{\Omega}_0 \subset \tilde{\Omega}_0' \subset M$ such that $\Phi: \tilde{\Omega}_0 \rightarrow D_{r}$ and $\Phi: \tilde{\Omega}_{0}' \rightarrow D_{2r}$ are biholomorphic maps, where $D_{r} \subset \mathbb{C}$ is the disk with radius $r$ centred at the origin. \par 
By compactness we now find finitely many distinct points $ \{ \tilde{p}_{0}, \tilde{p}_{1}, ... \tilde{p}_{N} \} \subset M_0$ which form the preimage of $\Phi_{|_{M_0}}$ under $0$, then there exists subsets $\{ \tilde{\Omega}_{j'} \}_{1 \leq j' \leq N}, \{ \tilde{\Omega}_{j'}' \}_{1 \leq j' \leq N}$ in $M$ such that $\tilde{p}_{j} \in \tilde{\Omega}_{j'}$ and $\tilde{\Omega}_{j} \subset \tilde{\Omega}_{j'}'$. Moreover, we can choose them to be such that
\begin{alignat*}{2}
& \hspace*{0.85cm} \bigcup_{1 \leq j' \leq N} \tilde{\Omega}_{j'} \cap M_0 = \Phi^{-1}_{|_{M_0}}(D_{r}), \ \ \Phi: \tilde{\Omega}_{j'} \rightarrow D_{r} \ \text{is biholomorphic, and} \\
& \bigcup_{1 \leq j' \leq N} \tilde{\Omega}_{j'}' \cap M_0  = {\Phi}^{-1}_{|_{M_0}}(D_{2r}), \ \ \Phi: \tilde{\Omega}_{j'}' \rightarrow D_{2r} \ \text{is biholomorphic}, \ 0 \leq j' \leq N.
\end{alignat*}
Thus $\{ \tilde{\Omega}_{j'} \}_{0 \leq j' \leq N}, \{ \tilde{\Omega}_{j'}' \}_{0 \leq j' \leq N}$ define holomorphic charts on $M$. Let $\{ \tilde{\chi}_{j'}, \tilde{\chi}_{j'}' \}_{0 \leq j' \leq N}$ be smooth cut-off functions such that $ \tilde{\Omega}_{j'} \subset \text{Supp} \hspace*{0.5mm} \tilde{\chi}_{j'} $, $ \tilde{\Omega}_{j'}' \subset \text{Supp} \hspace*{0.5mm} \tilde{\chi}_{j'}'$ and $\tilde{\chi}_{j'}, \tilde{\chi}_{j'}'$ are identically $1$ on these sets. Without loss of generality, we can choose $\tilde{\chi}_{j'}'$ to be supported in a chart on which $\Phi$ remains biholomorphic and $\tilde{\chi}_{j'}'$ and is identically $1$ on the support of $\tilde{\chi}_{j'}$. In particular, for every $1 \leq j' \leq N$ we assume that $\text{Supp} \hspace*{0.5mm} \tilde{\chi}_{j'}'$ contains no other point $p \in M$ such that $\Phi(p) \in D_{2r}$.  
\par We now construct our special holomorphic amplitudes. For convenient we will often employ the notation $\Omega_0 = \tilde{\Omega}_0$.
\begin{lemma} \label{special amplitude}
Let $\tilde{p}_{0}$ be an arbitrary point in $M_0$ for which $\Phi(\tilde{p}_0)=0$. Suppose that $\Omega_0 \subset M$ is a neighbourhood of $\tilde{p}_0$ such that $\Phi: \Omega_{0} \rightarrow D_r$ is biholomorphic for some $r>0$. Let $\{ \tilde{\Omega}_{j'}', \tilde{\chi}_{j'}, \tilde{\chi}_{j'}'  \}_{0 \leq j' \leq N}$ be the holomorphic charts chosen above with coordinates $\{ z_{j'} \}_{0 \leq j' \leq N}$, then the followings are true:
\begin{enumerate}
\item[-] For every $p_0 \in \tilde{\Omega}_0'$ there exists a smooth function $\tilde{a}(p;p_0)$ in $p \in M$ such that the function defined by 
\begin{alignat}{2} \label{condition for a}
a(p;p_0) \ \mydef \ \tilde{\chi}_0'(p) + ( \Phi(p) - \Phi(p_0) ) \tilde{a}(p;p_0)
\end{alignat}
is holomorphic on $M \times \tilde{\Omega}_0'$. \vspace*{1mm}
\item[-] For every $p_0 \in \tilde{\Omega}_0'$ there exists smooth (1,0)-forms $\tilde{b}_{j'}(p;p_0)$ in $p \in M$ such that the forms defined by
\begin{alignat}{2}  \label{locla condition for b}
b_{j'}(p;p_0) \ \mydef \ \tilde{\chi}_{j'}'(p) d z_{j'} + ( \Phi(p) - \Phi(p_0) ) \tilde{b}_{j'}(p;p_0), \ \ 0 \leq j' \leq N
\end{alignat}
are holomorphic on $M$. Moreover, on every holomorphic chart $\Omega$ of $M$, the coefficients of $b_{j'}$ are holomorphic on $\Omega \times \tilde{\Omega}_0'$.
\end{enumerate}
\end{lemma}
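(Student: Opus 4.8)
The plan is to reduce both assertions to solving an inhomogeneous $\bar{\partial}$-equation on $M$ whose right-hand side is smooth, compactly supported, and depends holomorphically on the parameter $p_0$, and then to apply the bounded right inverses of $\bar{\partial}$ furnished by (\ref{invert d-bar}) and (\ref{second order inverse dbar}). Write $a(p;p_0) = \tilde{\chi}_0'(p) + (\Phi(p) - \Phi(p_0))\tilde{a}(p;p_0)$ and let $\bar{\partial}_p$, $\bar{\partial}_{p_0}$ denote the $\bar{\partial}$-operators in the two variables. Since $\Phi$ is holomorphic (so $\bar{\partial}\Phi = 0$ and $\partial_{\bar{z}}\Phi = 0$ in every holomorphic coordinate) and $\Phi(p_0)$ does not depend on $p$, one has
\[
\bar{\partial}_p a = \bar{\partial}_p\tilde{\chi}_0' + (\Phi(p) - \Phi(p_0))\,\bar{\partial}_p\tilde{a},
\]
so $a(\,\cdot\,;p_0)$ is holomorphic on $M$ exactly when $\tilde{a}$ solves $\bar{\partial}_p\tilde{a} = -(\Phi - \Phi(p_0))^{-1}\,\bar{\partial}_p\tilde{\chi}_0'$. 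Likewise, writing $b_{j'}(p;p_0) = \tilde{\chi}_{j'}'(p)\,dz_{j'} + (\Phi(p) - \Phi(p_0))\tilde{b}_{j'}(p;p_0)$ and using the identity $\bar{\partial}\big((\Phi - \Phi(p_0))\omega\big) = (\Phi - \Phi(p_0))\,\bar{\partial}\omega$ for a $(1,0)$-form $\omega$ (again because $\partial_{\bar{z}}\Phi = 0$), the $(1,0)$-form $b_{j'}(\,\cdot\,;p_0)$ is holomorphic on $M$ exactly when $\tilde{b}_{j'}$ solves $\bar{\partial}_p\tilde{b}_{j'} = -(\Phi - \Phi(p_0))^{-1}\,\bar{\partial}_p(\tilde{\chi}_{j'}'\,dz_{j'})$, whose right-hand side is a $2$-form.

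The next step is to check that these right-hand sides are smooth on all of $M$. The form $\bar{\partial}_p\tilde{\chi}_{j'}'$ is supported where $\tilde{\chi}_{j'}'$ fails to be locally constant; since $\tilde{\chi}_{j'}'$ is identically $1$ on the open set $\tilde{\Omega}_{j'}'$ and, by the choice made above, is supported in a single chart on which $\Phi$ is biholomorphic, injectivity of $\Phi$ there gives $\Phi^{-1}(D_{2r}) \cap \mathrm{Supp}\,\tilde{\chi}_{j'}' = \tilde{\Omega}_{j'}'$, hence $\mathrm{Supp}\,\bar{\partial}_p\tilde{\chi}_{j'}'$ is disjoint from $\Phi^{-1}(D_{2r})$ (and the same is true for $\tilde{\chi}_0'$). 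As $\Phi(p_0) \in D_{2r}$ for every $p_0 \in \tilde{\Omega}_0'$, the factor $\Phi(p) - \Phi(p_0)$ is nowhere zero on these supports, so the quotients extend by zero to smooth compactly supported forms on $M$ which are jointly smooth in $(p,p_0)$ and, since $p_0 \mapsto \Phi(p_0)$ is holomorphic and $w \mapsto (\Phi(p)-w)^{-1}$ is holomorphic away from $w = \Phi(p)$, holomorphic in $p_0 \in \tilde{\Omega}_0'$. I then set
\[
\tilde{a} := \bar{T}\big[-(\Phi - \Phi(p_0))^{-1}\,\bar{\partial}_p\tilde{\chi}_0'\big], \qquad \tilde{b}_{j'} := \bar{T}\big[-(\Phi - \Phi(p_0))^{-1}\,\bar{\partial}_p(\tilde{\chi}_{j'}'\,dz_{j'})\big],
\]
where $\bar{T}$ is the right inverse in (\ref{invert d-bar}) in the first case and the right inverse acting on $2$-forms from (\ref{second order inverse dbar}) in the second. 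Since $\bar{\partial}\bar{T} = \text{Id}$ this forces $\bar{\partial}_p a = 0$ and $\bar{\partial}_p b_{j'} = 0$, and since both right inverses are bounded $W^{k,p} \to W^{k+1,p}$ for every $k$, Sobolev embedding makes $\tilde{a}$ and $\tilde{b}_{j'}$ smooth in $p$.

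It remains to upgrade to holomorphic dependence on $p_0$. The operator $\bar{T}$ acts only in the $p$-variable, so by the decomposition of $\bar{T}$ into a local Cauchy transform plus operators with smooth kernels on $M^2$ provided by Lemma \ref{decomposition of inverse d-bar operator}, $\tilde{a}$ and $\tilde{b}_{j'}$ are jointly smooth in $(p,p_0)$ and $\bar{\partial}_{p_0}$ commutes with $\bar{T}$; applying it to the right-hand sides, which are holomorphic in $p_0$, gives $\bar{\partial}_{p_0}\tilde{a} = 0$ and $\bar{\partial}_{p_0}\tilde{b}_{j'} = 0$. Together with $\bar{\partial}_{p_0}\Phi(p_0) = 0$ this yields $\bar{\partial}_{p_0}a = 0$ and $\bar{\partial}_{p_0}b_{j'} = 0$, so $a$ is holomorphic on $M \times \tilde{\Omega}_0'$; and $\bar{\partial}b_{j'} = 0$ means that in any holomorphic chart $\Omega$, writing $b_{j'} = h\,dz$, the coefficient satisfies $\partial_{\bar{z}}h = 0$, so $h$ is holomorphic in $z$ and, with the above, holomorphic on $\Omega \times \tilde{\Omega}_0'$. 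The point I expect to be most delicate is the support bookkeeping of the cut-offs $\tilde{\chi}_0'$, $\tilde{\chi}_{j'}'$ against $\Phi^{-1}(D_{2r})$ in the second step, as it is exactly what makes the right-hand sides smooth and holomorphic in the parameter; granting it, the regularity theory for $\bar{\partial}$ recalled in Subsection \ref{invert dbar} closes the argument.
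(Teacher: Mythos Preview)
Your proof is correct and follows essentially the same route as the paper: reduce holomorphy of $a$ and $b_{j'}$ to a $\bar\partial$-equation with smooth right-hand side, check that $\bar\partial\tilde\chi_{j'}'$ is supported away from $\{\Phi(p)=\Phi(p_0)\}$ so the quotient $(\Phi-\Phi(p_0))^{-1}\bar\partial\tilde\chi_{j'}'$ is globally smooth, apply the right inverse $\bar T$ from (\ref{invert d-bar}) and (\ref{second order inverse dbar}), and then use the integral representation in Lemma~\ref{decomposition of inverse d-bar operator} to justify differentiating in $p_0$. The only cosmetic difference is that the paper phrases the last step as ``apply Lebesgue's differentiation theorem to the explicit integral'' rather than your ``$\bar\partial_{p_0}$ commutes with $\bar T$'', but both rest on the same absolute convergence guaranteed by Lemma~\ref{decomposition of inverse d-bar operator}.
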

\begin{proof}
For every $p_0 \in \tilde{\Omega}_0'$ we let $N(p_0)$ be the finite set of points $p \in M$ such that $\Phi(p) = \Phi(p_0)$ by compactness, then on $M \backslash N(p_0)$ we can carry out the following calculations 
\begin{alignat}{2}
\label{holomorphic amplitude first compute}
\bar{\partial}_{p} \Big( \frac{ \tilde{\chi}_0' (p)}{ \Phi(p) - \Phi(p_0) } \Big) = \frac{\bar{\partial}_{p} \tilde{\chi}_0' (p)}{\Phi(p) - \Phi(p_0)} \ \ \text{and} \ \ \bar{\partial}_{p} \Big( \frac{ \tilde{\chi}_{j'}'(p) d z_{j'} }{ \Phi(p) - \Phi(p_0) } \Big) = \frac{ \bar{\partial}_{p}( \tilde{\chi}_{j'}'(p) dz_{j'} ) }{ \Phi(p) - \Phi(p_0) } .
\end{alignat}
Suppose that $\Phi(p) = \Phi(p_0)$, then by construction we must have either $p \in \tilde{\Omega}_{j'}'$ for some $0 \leq j' \leq N$, or $p$ is contained in sets away from the supports of $\tilde{\chi}_{j'}'$ for all $j'$. Since $\partial_{\bar{z}}\tilde{\chi}_0', \partial_{\bar{z}} \tilde{\chi}_{j'}' $ vanish identically on all such sets, the terms in (\ref{holomorphic amplitude first compute}) extend respectively to a smooth holomorphic $1$-form and a $2$-form on $M$ via extension by zero. Thanks to the existence in (\ref{invert d-bar}) and (\ref{second order inverse dbar}), we can choose $\tilde{a}$ and $\tilde{b}$ by
\begin{alignat}{2} \label{definition of R}
\tilde{a}(p;p_0) \ \mydef \ \bar{T}_{p'} \bar{\partial}_{p'} \Big( \frac{ \tilde{\chi}_0'(p')}{\Phi(p') - \Phi(p_0)} \Big) \ \ \text{and} \ \  \tilde{b}_{j'}(p;p_0) \ \mydef \ \bar{T}_{p'} \bar{\partial}_{p'} \Big( \frac{ \tilde{\chi}_{j'}'(p') dz_{j'}'  }{ \Phi(p') - \Phi(p_0) } \Big).
\end{alignat}
By the smoothness of (\ref{holomorphic amplitude first compute}) and the boundedness of $\bar{T}$ and $T$ we deduce that both $\tilde{a}(\cdot ; p_0)$ and $\tilde{b}( \cdot ; p_0 )$ are smooth for every $p_0 \in \tilde{\Omega}_0'$. Therefore, the prescriptions
\begin{alignat}{2} \label{definition of a}
\begin{split}
& \hspace*{0.1cm} a(p;p_0)  =  ( \Phi(p) - \Phi(p_0) ) \Big( \frac{ \tilde{\chi}_0'(p)}{\Phi(p) - \Phi(p_0)} - \tilde{a}(p;p_0) \Big) = \tilde{\chi}_0'(p) -  ( \Phi(p) - \Phi(p_0) ) \tilde{a}(p;p_0), \ \ \text{and} \\
& b_{j'}(p;p_0)  = ( \Phi(p) - \Phi(p_0)  ) \Big( \frac{ \tilde{\chi}_{j'}'(p) dz_{j'}  }{ \Phi(p) - \Phi(p_0)  } - \tilde{b}_{j'}(p;p_0) \Big) = \tilde{\chi}_{j'}'(p) dz_{j'} - (\Phi(p) -\Phi(p_0) ) \tilde{b}_{j'}(p;p_0)
\end{split}
\end{alignat}
define respectively a holomorphic function and (1,0)-forms in $p \in M$ as required. \par 
It remains to show that $\tilde{a}$ and the coefficients of $\tilde{b}_{j'}$ are holomorphic in $p_0 \in \Omega_0$, but it suffices to observe directly from (\ref{definition of R}) combined with Lemma \ref{decomposition of inverse d-bar operator} that for some locally supported smooth functions $\tilde{\chi}''_0$ and $\tilde{\chi}''_{j'}$, we can write $\tilde{a}$ and $\tilde{b}$ as
\begin{alignat}{2}
\begin{split} \label{integral structure of a and b}
& \hspace*{1.6cm} \tilde{a}(p;p_0) = ( \tilde{\chi}_0''(p) \bar{R}_{p'} + K_{p'} )\Big(  \frac{\bar{\partial}_{p'} \tilde{\chi}_{0}(p') }{\Phi(p') - \Phi(p_0)}\Big), \ \ \text{and} \\ 
& \tilde{b}_{j'}(p;p_0) = - \star ( \tilde{\chi}_{j'}''(p) \bar{R}_{p'} + L_{p'} )\Big( \frac{ |g_{j'}|^{1/2} \star \bar{\partial}_{p'}( \tilde{\chi}_{j'}'(p') dz_{j'}' )  }{\Phi(p') - \Phi(p_0)} \Big), \ \ 0 \leq j' \leq N
\end{split}
\end{alignat}
where $K$ and $L$ are operators with smooth kernels. In particular, these are linear combinations of absolutely convergent integrals in the variable $p'$ since $\Phi(p') \neq \Phi(p_0)$ for any $p_0 \in \tilde{\Omega}_{0}'$ on the supports of their integrands, thus we can apply Lebesgue's differentiation theorem to conclude that $\tilde{a}$ and $\tilde{b}$ depend holomorphically on $p_0 \in \tilde{\Omega}_0'$. The claim now follows directly from the structures of $a$ and $b_{j'}$.
\end{proof}

A useful consequence of the holomorphic dependencies of $a$ and $b$ on $p_0$ is the following.
\begin{lemma} \label{Taylor expansion Lemma}
Let $a$ and $\{ b_{j'} \}_{0 \leq j' \leq N}$ be the holormophic functions and (1,0)-forms constructed in Lemma (\ref{special amplitude}). Suppose that $ \{ \Omega_j \}_{j \geq 0}$ is a holomorphic atlas of $M_0$ in $M$ such that $\Phi: \Omega_{j} \rightarrow \Phi(\Omega_j)$ is biholomorphic for each $j \geq 0$ and that $\Phi(\Omega_{j})$ is sufficiently small. If $\{ \chi_j \}_{j \geq 0}$ is a partition of unity subordinate to $\{ \Omega_{j} \}_{j \geq 0}$ with coordinate $z_{j}$ and let $b^{(j)}_{j'} $ be the coefficients of $b_{j'}$ in the coordinates $z_{j}$, then there exists Taylor expansion
\begin{alignat}{2}
\begin{split} \label{Taylor series of a and b}
& \hspace*{0.8cm}  a(p;p_0) = \sum_{\mu \geq 0}  a_{\mu}(p) z_0^{\mu} \ \ \text{and} \\
& \chi_{j}(p) b_{j'}^{(j)}(p;p_0)  = \sum_{\mu \geq 0} \chi_j(z) b_{j',\mu}^{(j)}(z_j) z_0^{\mu}
\end{split}
\end{alignat}
for every $z_0 \in D_r$ under the changes of variables $\Phi_{|_{\Omega_0}}(p) = z_0$ and $\Phi_{|_{\Omega_j}}(p) = z_j$, where $a_{\mu}$ is holomorphic in $M$ and $b_{j',\mu}^{(j)}$ is holomorphic in $\Omega_j$ for every $\mu \geq 0$ and $j,j' \geq 0$. Moreover, 
\begin{alignat}{2} \label{estimate for Taylor expansion}
\begin{split}
& \sup_{z_0 \in D_r} \sum_{\mu \geq 0} \| a_{\mu} \|_{L^{\infty}(M_0)} |z_0|^{\mu}  \leq \sum_{\mu \geq 0} \| a_{\mu} \|_{L^{\infty}(M_0)} r^{\mu} < \infty, \ \ \text{and} \\
& \hspace*{0.35cm} \sup_{z_0 \in D_r}  \sum_{\mu \geq 0}  \| b_{j',\mu}^{(j)} \|_{L^{\infty}(\Omega_j)} |z_0|^{\mu}  \leq  \sum_{\mu \geq 0} \| b_{j',\mu}^{(j)} \|_{L^{\infty}(\Omega_{j})} r^{\mu}  < \infty
\end{split}
\end{alignat}
for every $j,j' \geq 0$.
\end{lemma}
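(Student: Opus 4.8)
The plan is to exploit the holomorphic dependence of $a$ and $b_{j'}$ on the parameter $p_0$, established in Lemma \ref{special amplitude}, and then transport that dependence to the variable $z_0$ through the biholomorphism $\Phi_{|_{\Omega_0}}$. First I would fix the holomorphic chart $\Omega_0$ with coordinate $z_0 = \Phi_{|_{\Omega_0}}(p_0) \in D_r$, so that $a(p;p_0)$ becomes, for each fixed $p$, a function of $z_0 \in D_r$; by Lemma \ref{special amplitude} this function is holomorphic on $D_r$. Since $D_r$ is a disk centred at the origin, the classical power series expansion applies: $a(p; \Phi_{|_{\Omega_0}}^{-1}(z_0)) = \sum_{\mu \geq 0} a_\mu(p) z_0^\mu$, where $a_\mu(p) = (\mu!)^{-1} \partial_{z_0}^\mu a(p; \cdot)|_{z_0 = 0}$, and the series converges uniformly on every compact subset of $D_r$, in particular on $\overline{D_{r'}}$ for each $r' < r$. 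The same argument applied coordinate-by-coordinate in the charts $\Omega_j$ gives the expansion for $\chi_j b_{j'}^{(j)}$, using the fact (from Lemma \ref{special amplitude}) that the coefficients of $b_{j'}$ in any holomorphic chart depend holomorphically on $p_0 \in \tilde{\Omega}_0'$.

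Next I would address holomorphy of the coefficients $a_\mu$ in the variable $p$. The point $p$ ranges over $M$ (or $M_0$), and holomorphy in $p$ for fixed $p_0$ was part of Lemma \ref{special amplitude}. To pass to $a_\mu$, I would use the integral representations \eqref{integral structure of a and b}: differentiating under the integral sign in $p_0$ (Lebesgue differentiation, justified exactly as in the proof of Lemma \ref{special amplitude} by absolute convergence, since $\Phi(p') \neq \Phi(p_0)$ on the supports of the integrands) and then evaluating at $p_0$ with $\Phi(p_0) = 0$ shows that each $a_\mu$ inherits the same integral structure with kernel $(\Phi(p'))^{-\mu-1}$ times the original data, hence is again a $\bar\partial$-potential of a smooth compactly supported form plus a smooth kernel term, and therefore holomorphic in $p$ on $M$. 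The argument for $b_{j',\mu}^{(j)}$ is identical and localises to $\Omega_j$.

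Finally, for the estimate \eqref{estimate for Taylor expansion}: by construction $a(p;p_0) = \tilde\chi_0'(p) + (\Phi(p) - \Phi(p_0))\tilde a(p;p_0)$ with $\tilde a$ smooth on the compact manifold $M_0$ and holomorphic in $p_0 \in \tilde\Omega_0'$, so $a$ is bounded uniformly in $(p,p_0) \in M_0 \times \overline{D_{r}}$; call the bound $C$. Then $a_\mu(p) = \frac{1}{2\pi i}\oint_{|z_0| = \rho} a(p; \Phi_{|_{\Omega_0}}^{-1}(z_0)) z_0^{-\mu-1}\, dz_0$ for any $\rho < r$, which gives the Cauchy estimate $\|a_\mu\|_{L^\infty(M_0)} \leq C \rho^{-\mu}$. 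Summing, $\sum_{\mu \geq 0} \|a_\mu\|_{L^\infty(M_0)} |z_0|^\mu \leq C \sum_{\mu \geq 0} (|z_0|/\rho)^\mu < \infty$ whenever $|z_0| < \rho$; letting $\rho \uparrow r$ and noting the supremum over $z_0 \in D_r$ is attained in the limit $|z_0| \to r$ yields the stated bound, and the same for $b_{j',\mu}^{(j)}$ using that $\tilde\chi_{j'}' dz_{j'} + (\Phi - \Phi(p_0))\tilde b_{j'}$ has coefficients bounded uniformly on $\Omega_j \times \overline{D_r}$. The main obstacle I anticipate is the bookkeeping in the second paragraph — confirming that the formal operation of expanding the kernels $(\Phi(p') - \Phi(p_0))^{-1} = \sum_\mu \Phi(p_0)^\mu \Phi(p')^{-\mu-1}$ inside $\bar T$ and $\bar R$ commutes with the (bounded but nonlocal) operators $\bar T, \bar T^\star, K, L$, which is really just dominated convergence once one checks the integrands are uniformly integrable, but it must be done with care since $\Phi(p')^{-\mu-1}$ grows with $\mu$ near the zero set of $\Phi$; this is harmless because the supports of the integrands stay a fixed positive distance from that zero set.
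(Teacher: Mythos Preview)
Your overall strategy is sound and close to the paper's, but the third paragraph contains a genuine gap in the estimate \eqref{estimate for Taylor expansion}. From the Cauchy bound $\|a_\mu\|_{L^\infty(M_0)} \le C\rho^{-\mu}$ with $\rho<r$ you obtain
\[
\sum_{\mu\ge0}\|a_\mu\|_{L^\infty(M_0)}\,|z_0|^\mu \le \frac{C}{1-|z_0|/\rho},\qquad |z_0|<\rho<r,
\]
and then you write ``letting $\rho\uparrow r$ \ldots\ yields the stated bound''. It does not: the right-hand side blows up as $|z_0|\to\rho$, and no choice of $\rho<r$ gives $\sum_\mu\|a_\mu\|r^\mu<\infty$, which is exactly what the lemma asserts. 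The Cauchy estimate on circles of radius $<r$ cannot by itself control the series at the boundary radius $r$.

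The missing ingredient, which you mention but do not use, is that $a(p;\cdot)$ is holomorphic on the \emph{larger} disk $D_{2r}$ (since $\Phi:\tilde\Omega_0'\to D_{2r}$ is biholomorphic by construction). Running the Cauchy integral on a circle of radius $\rho\in(r,2r)$ gives $\|a_\mu\|_{L^\infty(M_0)}\le C'\rho^{-\mu}$ with $C'=\sup_{M_0\times\overline{D_\rho}}|a|$, and then $\sum_\mu\|a_\mu\|r^\mu\le C'\sum_\mu(r/\rho)^\mu<\infty$ as required. The paper achieves the same end by a slightly different device: it localises $p$ to a chart $\Omega_j$ with $\Phi(\Omega_j)=\tilde D_{r_j}$, expands $a$ as a \emph{double} power series in $(z_j,z_0)$ with coefficients $a_{\mu,\nu}^{(j)}$, and uses that holomorphy on the strictly larger bidisk $\tilde D_{2r_j}\times D_{2r}$ forces $\sum_{\mu,\nu}|a_{\mu,\nu}^{(j)}|r_j^\nu r^\mu<\infty$. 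Either route works, but both rely essentially on the extra room in the $p_0$-variable; your argument as written does not.

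Your treatment of the holomorphy of $a_\mu$ in $p$ via the integral representations \eqref{integral structure of a and b} is correct but more elaborate than necessary. The paper simply observes that $a_\mu=(\mu!)^{-1}\partial_{z_0}^\mu a(p;0)$, and since $a$ is jointly holomorphic on $M\times\tilde\Omega_0'$ (Lemma \ref{special amplitude}), differentiation in $z_0$ preserves holomorphy in $p$; equivalently, the Cauchy integral formula for $a_\mu$ over a fixed circle in $z_0$ exhibits $a_\mu$ as a uniform limit of holomorphic functions of $p$.
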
 
\begin{proof}
By Lemma \ref{special amplitude},  the local expression of $a(p; \cdot)$ in the chart $\tilde{\Omega}_0'$ is holomorphic on $D_{2r}$ so that its radius of convergence is greater than $r$. Since $a_{\mu} = \partial_{\bar{z}_0}a(p; 0)/ \mu !$ we certainly have that $a_{\mu}$ is holomorphic on $M$. We can now cover $M_0$ by the finite collection of holomorphic charts $\{ \Omega_j, \chi_j \}_{j \geq 0}$ and assume that $\Phi(\Omega_j) = \tilde{D}_{r_{j}}$ is a disk of radius $r_{j}>0$ for every $j \geq 0$. In particular, we can assume that the local representation of $a$ is holomorphic on $\tilde{D}_{2r_j} \times D_{2r}$. \par We obviously have 
\begin{alignat*}{2}
\sup_{z_0 \in D_r} \sum_{\mu \geq 0} \| a_{\mu} \|_{L^{\infty}(M)} |z_0|^{\mu} \leq \sum_{j \geq 0} \sum_{\mu \geq 0} \| \chi_{j} a_{\mu} \|_{L^{\infty}(\Omega_j)} r^{\mu}.
\end{alignat*}
By standard power series theory, there exists constants $C_j > 0$ such that
\begin{alignat*}{2}
\sum_{\mu \geq 0} \| \chi_j a_{\mu} \|_{L^{\infty}(\Omega_j)} r^{\mu} \leq \sum_{\mu, \nu \geq 0} |a_{\mu,\nu}^{(j)}| r^{\mu} r_j^{\nu} \leq C_j < \infty
\end{alignat*}
where $a_{\mu,\nu}^{(j)}$ are the Taylor coefficients of $a_{\mu}$ in the chart $\Omega_j$. Summing over $j$ proves the first part of (\ref{estimate for Taylor expansion}). The case of $b_{j'}$ proceed similarly.
\end{proof}
\subsection{Inverting the Conjugated Operators}
Suppose that $\Phi$ is the holomorphic function without critical point chosen in subsection 2.3 and let $\varphi$, $\phi$ be respectively the real and imaginary parts of $\Phi$. For every $p_0 \in M$, we set $\Psi(p;p_0) = (\Phi(p) - \Phi(p_0) )^2$ and let $\psi$ be the real part of $\Psi$. Assume also that $\omega$ is a complex vector in $\mathbb{C}$ such that $|\omega|, \lambda > 0$. In this subsection we establish the conjugated operators that will be important in constructing special solutions to the Schr\"odinger equations. We first consider the conjugated Laplacians 
\begin{alignat*}{2}
P_{\Psi} \ \mydef \ e^{-i \Psi \lambda} \Delta_g e^{i \Psi \lambda}, \ P_{\bar{\Psi}} \ \mydef \ e^{-i \bar{\Psi} \lambda} \Delta_{g} e^{i \bar{\Psi} \lambda}, \ P_{\Phi} \ \mydef \ e^{ \frac{i}{2} \Phi \bar{\omega} } \Delta_{g} e^{- \frac{i}{2} \Phi \bar{\omega}}, \ P_{\bar{\Phi}} \ \mydef \ e^{\frac{i}{2} \bar{\Phi} \omega } \Delta_{g} e^{- \frac{i}{2} \bar{\Phi} \omega}.
\end{alignat*}
In the same way we may look at conjugated inverses to the Cauchy-Riemann operators
\begin{alignat*}{2}
&T_{\Psi} \ \mydef \ \frac{1}{2} e^{-2i \psi \lambda} T e^{2i \psi \lambda}, \hspace*{1.4mm} \ \ \bar{T}_{\Psi} \ \mydef \ \frac{1}{2} e^{-2i \psi \lambda} \bar{T} e^{2i \psi \lambda}, \ \ \text{and} \\
& \hspace*{0.55cm} T_{\Phi} \ \mydef \ \frac{1}{2} e^{ i \Phi \cdot \omega} T e^{- i \Phi \cdot \omega}, \hspace*{0.23cm} \ \ \bar{T}_{\Phi} \ \mydef \ \frac{1}{2} e^{ i \Phi \cdot \omega} \bar{T} e^{- i \Phi \cdot \omega}.
\end{alignat*}
Let $M_0'$ be a bounded domain in $M$ with smooth boundary such that $ M_0 \subset \overline{M_0'} \subset M $. We can choose $\tilde{\rho} \in \mathcal{C}^{\infty}_{c}(M_0')$ such that $\tilde{\rho}$ is identically $1$ near $M_0$. In view of the factorisation $\Delta_{g} = 2 \bar{\partial}^{\star} \bar{\partial} = 2 \partial^{\star} \partial$, it is clear that 
\begin{alignat*}{2}
P_{\Psi} T_{\Psi} \tilde{\rho} T^{\star} = P_{\bar{\Psi}} \bar{T}_{\Psi} \tilde{\rho} \bar{T}^{\star} = \text{Id} \ \ \text{and} \ \ P_{\Phi} T_{\Phi} \tilde{\rho} T^{\star} = P_{\bar{\Phi}} \bar{T}_{\Phi} \tilde{\rho} \bar{T}^{\star} = \text{Id}
\end{alignat*} 
on $W^{k,p}(M_0)$. 
\section{Carleman Estimates and Complex Geometric Optic Solutions}
A standard procedures in solving the Calder\'on problem is based on the Alessandrini's integral identity. For two potentials $V_{1}$ and $V_2$ in $L^{p}(M_0)$ and $p>1$, it is well known that if $\mathcal{C}_{1} = \mathcal{C}_{2}$, then 
\begin{alignat}{2} \label{Alessndrini identity}
\int_{M_0} u V v \hspace*{0.5mm} d\text{v}_{g} = 0, \ \ V \ \mydef \ V_1 - V_2 
\end{alignat}
for all $H^1(M_0)$ solutions $ u \in \text{Ker} \hspace*{0.5mm} (\Delta_g + V_1) $ and $v \in \text{Ker} \hspace*{0.5mm} (\Delta_{g} + V_2)$. Thus the orthogonality (\ref{Alessndrini identity}) implies that the identification of $V_1 = V_2$ hinges on the extend to which we can find such solutions. In this section we construct various specials CGO solutions to the Schr\"odinger equation, which will enable us to solve the Calder\'on problem from (\ref{Alessndrini identity}). \par 
Let $a$ be the holomorphic function constructed in subsection \ref{construction subsection special holomorphic amplitude}. The CGO solutions we look for will be of the forms
\begin{alignat*}{2}
u = e^{i \Psi \lambda} (a + r), \ \ v = e^{i \bar{\Psi} \lambda} ( \bar{a} + s), \ \  \tilde{u} = e^{- \frac{i}{2} \Phi \bar{\omega}} ( a + \tilde{r}), \ \ \tilde{v} = e^{- \frac{i}{2} \bar{\Phi} \omega} ( \bar{a} + \tilde{s} ) 
\end{alignat*}
for sufficiently large $\lambda$ and $\omega$. Since $e^{i \Psi \lambda} a$ and $e^{-\frac{i}{2} \Phi \bar{\omega}} a$ are holomorphic, $e^{i \bar{\Psi} \lambda} \bar{a}$ and $e^{-\frac{i}{2} \bar{\Phi} \omega} \bar{a} $ are antiholomorphic, they must also be harmonics. Thus the conditions we impose on the remainders should be  
\begin{alignat}{2}  \label{condition for solution}
P_{\Psi}r = - V_{1}(a+r), \ \ P_{\bar{\Psi}}s = - V_2 (a+s), \ \ P_{\Phi} \tilde{r} = - V_{2} ( a + \tilde{r}), \ \ P_{\bar{\Phi}} \tilde{s} = - V_{2} ( a + \tilde{s}).
\end{alignat}
Corresponding to any $\tilde{p}_0 \in M_0$ we recall the construction of holomorphic $(1,0)$-forms $\{ b_{j'} \}_{1 \leq j' \leq N}$ in Lemma \ref{special amplitude}. Let $\{ Q_{j', \epsilon}^{+} \}_{0 \leq j' \leq N}$ and $\{ Q_{j', \epsilon}^{-} \}_{0 \leq j' \leq N}$ be sequences of $\mathcal{C}^{\infty}_{c}(M)$ functions which will be chosen depending on $\epsilon >0$ later. We set
\begin{alignat*}{2}
u_1 \ &\mydef \ T_{\Psi} \tilde{\rho} \Big( T^{\star} V_{1}a - \sum_{1 \leq j' \leq N} Q_{j', \epsilon}^{+}(p_0)b_{j'} \Big), \ \ \tilde{u}_1 \ \mydef \ T_{\Phi} \tilde{\rho} T^{\star} V_1 a  , \\
v_{1} \ &\mydef \ \bar{T}_{\Psi} \tilde{\rho} \Big( \bar{T}^{\star} V_2 \bar{a} - \sum_{1 \leq  j' \leq N } Q_{j', \epsilon}^{-}(p_0)\bar{b}_{j'} \Big), \hspace*{2.8mm} \tilde{v}_{1} \ \mydef \ \bar{T}_{\Phi} \tilde{\rho} \bar{T}^{\star} V_2 \bar{a}  .
\end{alignat*}
One then easily observes from (\ref{condition for solution}) and a direct computation that if 
\begin{alignat*}{2}
& u_{j} \ \mydef \ T_{\Psi} \tilde{\rho} T^{\star}(V_1 u_{j-1}), \ \ \tilde{u}_{j} \ \mydef \ T_{\Phi} \tilde{\rho} T^{\star}(V_1 \tilde{u}_{j-1}), \\
& \hspace*{0.7mm} v_{j} \ \mydef\ \bar{T}_{\Psi} \tilde{\rho} \bar{T}^{\star}(V_2 v_{j-1}), \ \ \hspace*{0.5mm} \tilde{v}_{j} \ \mydef \ \bar{T}_{\Phi} \tilde{\rho} \bar{T}^{\star}(V_2 \tilde{v}_{j-1})
\end{alignat*}
for all $j \geq 0$, then the functions defined by 
\begin{alignat}{2} \label{choice of remainder}
r \ \mydef \ \sum_{j \geq 1}  (-1)^{j} u_{j},  \ \ \tilde{r} \ \mydef \ \sum_{j \geq 1}  (-1)^{j} \tilde{u}_{j}, \ \ s \ \mydef \ \sum_{j \geq 1}  (-1)^{j} v_{j}, \ \ \tilde{s} \ \mydef \ \sum_{j \geq 1}  (-1)^{j} \tilde{v}_j
\end{alignat}
will formally satisfy conditions (\ref{condition for solution}). We complement these definitions by setting $u_0 = \tilde{u}_0 = a$ and $v_0 = \tilde{v}_0 = \bar{a}$. We elaborate on these findings in the followings.
\begin{proposition} \label{existence of CGO solutions}
Let $V_1, V_2$ be in $L^{p}(M_0)$ for $ p > 4/3 $, then there exists $\lambda_0 > 0$ and $\omega_0 \in \mathbb{C}$ such that for all $\lambda > \lambda_0$ and $|\omega| > |\omega_0|$, we can find functions $u, v, \tilde{u}, \tilde{v}$ in $L^{\infty}(M_0 ) \cap H^{1}(M_0)$ of the forms
\begin{alignat}{2} \label{series expansion solutions}
\begin{split}
u &= \sum_{j \geq 0} (-1)^{j} e^{i \Psi \lambda} u_j, \ \  \tilde{u} = \sum_{j\geq 0} (-1)^{j} e^{- \frac{i}{2} \Phi \bar{\omega}} \tilde{u}_j, \\
v &= \sum_{j \geq 0} (-1)^{j} e^{i \bar{\Psi} \lambda} u_{j}, \ \ \tilde{v} = \sum_{j \geq 0} (-1)^{j} e ^{- \frac{i}{2} \bar{\Phi} \omega} \tilde{v}_{j},
\end{split}
\end{alignat}
with $u, \tilde{u}$ belonging to $\text{Ker} \hspace*{1mm} (\Delta_{g} + V_1)$ and $v, \tilde{v}$  belonging to $\text{Ker} \hspace*{1mm} (\Delta_g + V_2)$.
\end{proposition}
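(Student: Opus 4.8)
The plan is to obtain $u,v,\tilde u,\tilde v$ by resumming the Neumann series \eqref{choice of remainder}, the whole matter being reduced to uniform bounds, with a gain in $\lambda$ (resp.\ $|\omega|$), on the solution operators $T_\Psi\tilde\rho T^\star$, $\bar T_\Psi\tilde\rho\bar T^\star$, $T_\Phi\tilde\rho T^\star$, $\bar T_\Phi\tilde\rho\bar T^\star$. Fix once and for all any admissible choice of the cut-offs $Q^\pm_{j',\epsilon}\in\mathcal C^\infty_c(M)$; the argument below does not depend on it. First I would record the \textbf{reduction}: it suffices to show (i) that for $\lambda,|\omega|$ large the four series in \eqref{choice of remainder} converge in $W^{2,p}(M_0)$, and (ii) that the resulting functions satisfy \eqref{condition for solution}. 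Indeed, since $\dim M_0=2$ and $p>4/3>1$ one has $W^{2,p}(M_0)\hookrightarrow L^\infty(M_0)\cap H^1(M_0)$, and the smooth weights $e^{i\Psi\lambda},e^{-\frac i2\Phi\bar\omega},\dots$ are multipliers on $W^{2,p}(M_0)$, so $u,\tilde u,v,\tilde v$ given by \eqref{series expansion solutions} then lie in $L^\infty(M_0)\cap H^1(M_0)$ and solve $(\Delta_g+V_j)\,\cdot\,=0$. For (ii), the factorization $\Delta_g=2\partial^\star\partial$, $\partial T=\mathrm{Id}$, and a short computation with the conjugating weights give $P_\Psi T_\Psi=\partial^\star$ on $(1,0)$-forms (this refines the identity $P_\Psi T_\Psi\tilde\rho T^\star=\mathrm{Id}$ of the last part of Section 2); since $\partial^\star T^\star=\mathrm{Id}$, $\partial^\star b_{j'}=0$ ($b_{j'}$ is holomorphic and $Q^+_{j',\epsilon}(p_0)$ is a scalar) and $\tilde\rho\equiv1$ near $M_0$, this yields $P_\Psi u_1=V_1a$ and $P_\Psi u_j=V_1u_{j-1}$ on $M_0$ for $j\ge2$, and likewise for the three other families. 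Once the series converge in $W^{2,p}(M_0)$, $P_\Psi$ acts term by term on $r=\sum_{j\ge1}(-1)^ju_j$ and telescopes to $P_\Psi r=-V_1(a+r)$ on $M_0$; similarly for $\tilde r,s,\tilde s$. So everything rests on the convergence.

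The \textbf{key estimate} is: there exist $C,\delta>0$ and $\lambda_0,\omega_0$ such that, for $\lambda>\lambda_0$,
\[
\|T_\Psi\tilde\rho T^\star h\|_{L^\infty(M_0)}+\|\bar T_\Psi\tilde\rho\bar T^\star h\|_{L^\infty(M_0)}\le C\lambda^{-\delta}\|h\|_{L^p(M_0)},
\]
the analogous bound for $T_\Phi\tilde\rho T^\star$, $\bar T_\Phi\tilde\rho\bar T^\star$ and $|\omega|^{-\delta}$, together with a (not necessarily uniform) bound $\|T_\Psi\tilde\rho T^\star h\|_{W^{2,p}(M_0)}\le C_\lambda\|h\|_{L^p(M_0)}$ and its analogues. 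The $W^{2,p}$ bound is immediate from the boundedness of $T^\star:L^p(M_0)\to W^{1,p}(M)$ and of $T:W^{1,p}(M;T^\star_{1,0}M)\to W^{2,p}(M)$ (Subsection \ref{invert dbar}), of multiplication by $\tilde\rho$, and the fact that conjugating by the smooth weight $e^{\mp 2i\psi\lambda}$ costs only a polynomial factor in $\lambda$. For the $L^\infty$ bound, since $T^\star h\in W^{1,p}(M)\hookrightarrow L^q(M)$ with $\tfrac1q=\tfrac1p-\tfrac12$ (so $q\ge4$ as $p\ge4/3$), it is enough to bound $T_\Psi$ from such $L^q$ forms into $L^\infty$ with a gain $\lambda^{-\delta}$. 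Here I would use Lemma \ref{decomposition of inverse d-bar operator} and the global splitting \eqref{global splitting} to write $T$ as a finite sum of localized Cauchy operators $\chi'_jR(\chi_j\,\cdot\,)$ plus operators with smooth kernels, so that $T_\Psi$ becomes a sum of operators $\chi'_je^{-2i\psi\lambda}R(e^{2i\psi\lambda}\chi_j\,\cdot\,)$ and $e^{-2i\psi\lambda}\mathcal K(e^{2i\psi\lambda}\,\cdot\,)$ with $\mathcal K$ smoothing.

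The hard part, and the main obstacle, is the uniform-in-$z$ estimate for the conjugated Cauchy transform with the Bukhgeim-type phase $\psi=\RE\Psi$. In a holomorphic coordinate this operator is $g\mapsto\chi'(z)\int e^{2i\lambda(\psi(\zeta)-\psi(z))}(\bar\zeta-\bar z)^{-1}\chi(\zeta)g(\zeta)\,dA(\zeta)$, and $\psi$ has only nondegenerate critical points, sitting at the finite fibre $N(p_0)$, near which $\psi(\zeta)=\RE((\zeta-z_0)^2)$. I would estimate it uniformly in $z$ by splitting the $\zeta$-integral dyadically in $|\zeta-z|$ and $|\zeta-z_0|$: on $\{|\zeta-z|\lesssim\lambda^{-1/2}\}$ use only the integrability of the Cauchy kernel, already a gain $\lambda^{-(1/q'-1/2)}$, positive since $q>2$; on $\{|\zeta-z|\gtrsim\lambda^{-1/2},\,|\zeta-z_0|\gtrsim\lambda^{-1/2}\}$, where $|\nabla_\zeta\psi|\gtrsim|\zeta-z_0|$ is bounded below, integrate by parts once in $\zeta$ against the oscillation, gaining $\lambda^{-1}$ at the cost of powers of $|\zeta-z_0|^{-1}$ and $|\zeta-z|^{-1}$ and a single derivative on $g$ (admissible since $g\in W^{1,p}$, $\nabla g\in L^p$); and on the remaining region, of measure $O(\lambda^{-1})$ with kernel $O(\lambda^{1/2})$, estimate crudely. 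Summing the dyadic scales gives a net gain $\lambda^{-\delta}$ with $\delta=\delta(p)>0$. The smooth-kernel operators $e^{-2i\psi\lambda}\mathcal K(e^{2i\psi\lambda}\,\cdot\,)$ are handled by iterated non-stationary-phase integration by parts away from a neighbourhood of $N(p_0)\times N(p_0)$ (yielding $O(\lambda^{-N})$) and by the small measure of the stationary set near it; the operators attached to $\Phi$ are easier, since the weight $e^{-\frac i2\Phi\bar\omega}$ carries the critical-point-free phase $\RE(\Phi\bar\omega)$, so non-stationary phase alone gives $|\omega|^{-\delta}$; and the $\star$-twisted variants arising through $\bar T^\star=2\bar\partial G$ and $\bar T$ on $2$-forms are treated the same way. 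This stationary-phase analysis, together with the borderline integrability of the Cauchy kernel, is exactly where $p>4/3$ (equivalently $q>4$) is used, and is the only genuinely analytic step; the rest is bookkeeping on the operators and identities already constructed in Section 2.

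Granting the key estimate, the \textbf{conclusion} follows by resummation: for $j\ge2$, $u_j=T_\Psi\tilde\rho T^\star(V_1u_{j-1})$ gives $\|u_j\|_{L^\infty(M_0)}\le(C\lambda^{-\delta}\|V_1\|_{L^p(M_0)})^{j-1}\|u_1\|_{L^\infty(M_0)}$, summable once $\lambda_0$ is chosen with $C\lambda_0^{-\delta}\|V_1\|_{L^p(M_0)}<1$; the $W^{2,p}$ norms $\|u_j\|_{W^{2,p}(M_0)}\le C_\lambda\|V_1\|_{L^p(M_0)}\|u_{j-1}\|_{L^\infty(M_0)}$ then decay geometrically in $j$ for $\lambda$ fixed. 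Since $u_1=T_\Psi\tilde\rho(T^\star V_1a-\sum_{j'}Q^+_{j',\epsilon}(p_0)b_{j'})\in W^{2,p}(M_0)$ is finite, $r=\sum_{j\ge1}(-1)^ju_j$ converges in $W^{2,p}(M_0)$, hence in $L^\infty(M_0)\cap H^1(M_0)$; the same applies to $\tilde r$ (with $|\omega|$ replacing $\lambda$) and to $s,\tilde s$ (with $V_2,\bar a$). Multiplying by the smooth weights and using the reduction, we get $u,\tilde u\in\mathrm{Ker}(\Delta_g+V_1)$ and $v,\tilde v\in\mathrm{Ker}(\Delta_g+V_2)$ in $L^\infty(M_0)\cap H^1(M_0)$ of the form \eqref{series expansion solutions}, for all $\lambda>\lambda_0$ and $|\omega|>|\omega_0|$.
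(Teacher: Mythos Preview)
Your proposal is correct and follows the same arc as the paper: reduce to decay estimates for $T_\Psi\tilde\rho T^\star$ and its analogues via the splitting \eqref{global splitting} into localized Cauchy operators plus smoothing remainders, then resum the Neumann series in $L^\infty$ and bootstrap to $W^{2,p}\hookrightarrow H^1$. The only real difference is in how you establish the local Carleman bound. You propose a dyadic decomposition in both $|\zeta-z|$ and $|\zeta-z_0|$; the paper instead uses a single cut-off $\chi_\lambda=\chi(\lambda^{1/2}(\cdot-z_0))$ isolating only the critical point, and on its support exploits the exact identity
\[
\bar R\bigl(e^{2i\RE(z-z_0)^2\lambda}\chi_\lambda f\bigr)=\frac{i}{2\lambda}\Bigl(e^{2i\RE(z-z_0)^2\lambda}\,\frac{\chi_\lambda f}{\bar z-\bar z_0}-\bar R\Bigl(e^{2i\RE(z-z_0)^2\lambda}\,\partial_{\bar z}\frac{\chi_\lambda f}{\bar z-\bar z_0}\Bigr)\Bigr),
\]
obtained from one integration by parts in $\bar\zeta$, the point-evaluation term coming from $\partial_{\bar\zeta}(\zeta-z)^{-1}=\pi\delta_z$; this automatically handles the Cauchy singularity and makes a separate cut-off in $|\zeta-z|$ unnecessary. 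The $L^p\to L^\infty$ bound is then read off by interpolation (Corollary~\ref{Carleman estimate corollary}) rather than estimated directly. Your dyadic scheme also works and is perhaps more in the spirit of classical stationary phase, but the paper's single-cut-off computation is shorter and yields the explicit exponents $\lambda^{-(1-1/p+1/q)}$ and $\lambda^{-1/r}$ of Proposition~\ref{Carleman Estimate}. The final step, passing from $L^\infty$ convergence of $r$ to $r\in W^{2,p}(M_0)$ via $r=-u_1-T_\Psi\tilde\rho T^\star(V_1 r)$ and the $L^p\to W^{2,p}$ boundedness of the solution operator, is identical in both arguments.
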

\begin{proof} 
We have shown that the functions defined by (\ref{series expansion solutions}) satisfy formally their corresponding Schr\"odinger equations. Thus it suffices to show that the series converge in appropriate spaces. We do so by first showing that they satisfy sufficiently nice asymptotic behaviour.  These will be done via the following estimates. \par 
Recall the surface $M_0'$ defined in Subsection 2.4.
\begin{proposition} \label{Carleman Estimate}
Assume that $(p, q, r)$ is in ${]}4/3, 2 {[} \times {]}4, \infty{[} \times {]}2, \infty{}[$ and $1/2 + 1/q \geq 1/p > 1/2$, then there exists a constant $C>0$ independent of $p_0 \in \Omega_0$ such that
\begin{alignat}{2} 
\label{dbar estimate}
\| \bar{T}_{\Psi} \sigma \|_{L^{q}(M_0)} & \leq \frac{C \| \sigma \|_{W^{1,p}(M;T^{\star}_{0,1}M)}}{\lambda^{1- (\frac{1}{p}-\frac{1}{q})}}, \ \ \| \bar{T}_{\Psi} \sigma \|_{L^{\infty}(M_0)} \leq \frac{C \| \sigma \|_{W^{1,r}(M;T^{\star}_{0,1}M)} }{\lambda^{\frac{1}{r}}}
\end{alignat}
for all $\sigma \in W^{1,p}_{0} (M_0';T^{\star}_{0,1}M_0')$. Alternatively, if $(p,q)$ is in ${]}4/3, 2{[} \times {]}2,4{[}$ and $1/p + 1/q = 1$, then we have
\begin{alignat}{2} \label{dbar estimate2}
\| \bar{T}_{\Phi} \sigma \|_{L^{q}(M_0)} & \leq \frac{ C \| \sigma \|_{W^{1,p}(M;T^{\star}_{0,1}M)}  }{|\omega|}, \ \ \| \bar{T}_{\Phi} \sigma \|_{L^{\infty}(M_0)} \leq \frac{ C\| \sigma \|_{W^{1,r}(M;T^{\star}_{0,1}M)}}{|\omega|}
\end{alignat}
for all $\sigma \in W^{1,p}_{0} (M_0';T^{\star}_{0,1}M_0')$.
\end{proposition}
By equation (\ref{global splitting}), it is sufficient to prove (\ref{dbar estimate}) and (\ref{dbar estimate2}) locally for $\bar{R}$ and globally for an operator $K$ with smooth kernel. Moreover, by density we may assume that $\sigma \in \mathcal{C}^{\infty}_{c}(M'_0; T^{\star}_{0,1}M_0')$, and so by partition of unity and the fact that $\Phi$ has non-vanishing derivative on $M$, it is enough to assume that the support of $\sigma$ is compactly contained in a holomorphic chart $\Omega \subset M$ on which $\Phi_{|_{\Omega}}(p) = z$. \par 
Let us first consider the local case, the proof of which we partially recall from \cite{L4/3calderon}. 
\begin{lemma} \label{Carleman estimate local lemma}
Assume that $(p,q,r)$ is in ${]}4/3, 2{[} \times {]}4, \infty{[} \times {]}2, \infty{[}$ and $1/2 + 1/q \geq 1/p > 1/2$, then there exists a constant $C>0$ independent of $p_0 \in \Omega_0$ such that
\begin{alignat}{2} \label{Local estimate}
 \| \bar{R} e^{2i \text{Re} \hspace{0.5mm} (z-z_0)^2 \lambda} f \|_{L^{q}(\Omega)} & \leq \frac{C \| f \|_{W^{1,p}(\Omega)}}{\lambda^{1- (\frac{1}{p}-\frac{1}{q}) }}, \ \ \| \bar{R} e^{2i \text{Re} \hspace*{0.5mm} (z-z_0)^2 \lambda} f \|_{L^{\infty}(\Omega)} \leq \frac{C \| f \|_{W^{1,r}(\Omega)}}{\lambda^{\frac{1}{r}}}
\end{alignat}
for all $f \in \mathcal{C}^{\infty}_{c}(\Omega)$. Alternatively, if $(p,q)$ is in ${]}4/3, 2{[} \times {]}2, 4{[}$ and $1/p + 1/q = 1$, then we have
\begin{alignat}{2}  \label{Local estimate2}
 \| \bar{R} e^{-i z \cdot \omega} f \|_{L^{q}(\Omega)} & \leq \frac{C \| f \|_{W^{1,p}(\Omega)}}{|\omega|}, \ \ \| \bar{R} e^{-i z \cdot \omega} f \|_{L^{\infty}(\Omega)} \leq \frac{C \| f \|_{W^{1,r}(\Omega)}}{|\omega|}
\end{alignat}
for all $f \in \mathcal{C}^{\infty}_{c}(\Omega)$.
\end{lemma}
\begin{proof}
Let $\chi$ in $\mathcal{C}^{\infty}_{c}(\mathbb{C})$ be identically $1$ for $|z| \geq 2 $ and vanishes for all $|z| \leq 1$. Setting $\chi_{\lambda} = \chi( \lambda^{1/2} (z-z_0))$, we can  write
\begin{alignat}{2} \label{local split}
\bar{R} e^{2i \text{Re} \hspace*{0.5mm} (z-z_0)^2 \lambda} f = \bar{R} e^{2i \text{Re} \hspace*{0.5mm} (z-z_0)^2 \lambda} \chi_{\lambda} f + \bar{R} e^{2i \text{Re} \hspace*{0.5mm (z-z_0)^2 \lambda}} (1- \chi_{\lambda} ) f.
\end{alignat}
Assume first $1/2 + 1/q \geq 1/p > 1/2$ and set $(p', q', r_1, r_2)$ to be such that
\begin{alignat}{2} \label{new indices}
\frac{1}{p'} \ \mydef \ \frac{1}{p} - \frac{1}{2}, \ \ \frac{1}{q'} \ \mydef \ \frac{1}{q} + \frac{1}{2} \ \ \text{and} \ \ \frac{1}{r_1} \ \mydef\ \frac{1}{q'} - \frac{1}{p'}, \ \ \frac{1}{r_2} \ \mydef \  \frac{1}{q} - \frac{1}{p'} = \frac{1}{q'} - \frac{1}{p}.
\end{alignat}
The final term in (\ref{local split}) can easily be estimated from Sobolev and H\"older's inequalities 
\begin{alignat}{2} \label{first split}
\begin{split}
\| \bar{R} & e^{2i \text{Re} \hspace*{0.5mm} (z-z_0)^2 \lambda} (1- \chi_{\lambda}) f \|_{L^{q}} \lesssim \| \bar{R} e^{2i \text{Re} (z-z_0)^2 \lambda} (1- \chi_{\lambda}) f \|_{W^{1,q'}} \\
& \hspace*{1cm} \lesssim \| (1- \chi_{\lambda})f \|_{L^{q'}} \lesssim \| 1-\chi_{\lambda} \|_{L^{r_1}} \| f \|_{L^{p'}} \lesssim \frac{\| f \|_{W^{1,p}}}{\lambda^{\frac{1}{r_1}}} = \frac{\| f \|_{W^{1,p}}}{\lambda^{1 - (\frac{1}{p} - \frac{1}{q})}},
\end{split}
\end{alignat}
and likewise we can get that
\begin{alignat}{2} \label{first split L inf estimate}
\begin{split}
\| \bar{R}  e^{2i \text{Re} \hspace*{0.5mm} (z-z_0)^2 \lambda} & (1- \chi_{\lambda}) f \|_{L^{\infty}} \lesssim \| \bar{R} e^{2i \text{Re} \hspace*{0.5mm} (z-z_0)^2 \lambda} (1-\chi_{\lambda}) f \|_{W^{1,r}} \\
& \lesssim \| (1- \chi_{\lambda}) f \|_{L^{r}} \leq \| 1- \chi_{\lambda} \|_{L^{r}} \| f \|_{L^{\infty}} \lesssim \frac{\| f \|_{W^{1,r}}}{\lambda^{\frac{1}{r}}}.
\end{split}
\end{alignat}
For the first term in the expansion (\ref{local split}), we integrate by parts to see that
\begin{alignat}{2} \label{integrate by parts}
\bar{R} e^{2i \text{Re} (z-z_0)^2 \lambda} \chi_{\lambda} f = \frac{i}{2\lambda} \Big( e^{2i \text{Re} \hspace*{0.5mm} (z-z_0) \lambda } \frac{\chi_{\lambda} f}{\bar{z}-\bar{z}_0}  -  \bar{R} \big( e^{2i \text{Re} \hspace*{0.5mm} (z-z_0)^2 \lambda} \partial_{\bar{z}} ( \frac{\chi_{\lambda} f}{\bar{z} - \bar{z}_0} )   \Big).
\end{alignat}
It follows again from boundedness of $\bar{R}$ that
\begin{alignat}{2} \label{second split}
\| \bar{R} e^{2i \text{Re} \hspace*{0.5mm} (z-z_0)^2 \lambda} \chi_{\lambda}  f\|_{L^{q}} \lesssim \frac{1}{\lambda} \Big( \big\| \frac{\chi_{\lambda} f}{ \bar{\cdot} - \bar{z}_0 } \big\|_{L^{q}} + \big\| f \partial_{\bar{z}} ( \frac{\chi_{\lambda}}{\bar{\cdot} - \bar{z}_0} )   \big\|_{L^{q'}} + \big\| \frac{\chi_{\lambda}\partial_{\bar{z}}f}{\bar{\cdot} - \bar{z}_0} \big\|_{L^{q'}}  \Big).
\end{alignat}
Since $\chi_{\lambda}$ is supported away from the set on which $z=z_0$, the first term on the right hand side in (\ref{second split}) can be estimated by
\begin{alignat*}{2}
\big\| \frac{\chi_{\lambda} f}{ \bar{\cdot} - \bar{z}_0 } \big\|_{L^{q}} \lesssim \big\| \frac{\chi_\lambda }{\bar{\cdot} - \bar{z}_0} \big\|_{L^{r_2}} \| f \|_{L^{p'}} \lesssim \frac{\| f \|_{W^{1,p}}}{\lambda^{\frac{1}{r_2}-\frac{1}{2}}} = \frac{\| f \|_{W^{1,p}}}{ \lambda^{ \frac{1}{q} - \frac{1}{p} } }
\end{alignat*}
since $r_2 > 2$. By the same reasonings, the second term satisfies
\begin{alignat*}{2}
\big\| f \partial_{\bar{z}}( & \frac{\chi_{\lambda}}{\bar{\cdot} - \bar{z}_0} ) \big\|_{L^{q'}}  \lesssim  \big\| \frac{ f \partial_{\bar{z}} \chi_{\lambda}}{\bar{\cdot} - \bar{z}_0} \big\|_{L^{q'}} + \big\| \frac{\chi_{\lambda} f}{ | \hspace*{0.5mm} \bar{\cdot} - \bar{z}_0 |^2}  \big\|_{L^{q'}} \\
& \lesssim \lambda^{1/2} \big\| \frac{(\partial_{\bar{z}} \chi)( \lambda^{1/2} \cdot )}{\bar{\cdot}} \big\|_{L^{r_1}} \| f \|_{L^{p'}} + \big\| \frac{\chi_{\lambda}}{ | \hspace*{0.5mm} \bar{\cdot} - \bar{z}_0|^2 } \big\|_{L^{r_1}} \| f \|_{L^{p'}} \lesssim \frac{\| f \|_{W^{1,p}}}{ \lambda^{\frac{1}{r_1} -1} } = \frac{\| f \|_{W^{1,p}}}{\lambda^{\frac{1}{q} - \frac{1}{p}}},
\end{alignat*}
and for the last term, we have
\begin{alignat*}{2}
\big\| \frac{\chi_{\lambda}\partial_{\bar{z}}f}{\bar{\cdot} - \bar{z}_0} \big\|_{L^{q'}} \lesssim \big\| \frac{\chi_{\lambda}}{\bar{\cdot} - \bar{z}_0}   \big\|_{L^{r_2}} \| \partial_{\bar{z}}f \|_{L^{p}} \lesssim \frac{\| f \|_{W^{1,p}}}{ \lambda^{\frac{1}{r_2} - \frac{1}{2}}} = \frac{\| f \|_{W^{1,p}}}{\lambda^{\frac{1}{q} - \frac{1}{p}}}.
\end{alignat*}
Putting everything back into (\ref{second split}) and combined with (\ref{first split}), we arrive at the first estimate in (\ref{Local estimate}). Applying the same strategy with respect to the supremum norm, we also have from (\ref{integrate by parts}) that
\begin{alignat}{2} 
\begin{split}
\| \bar{R} &  e^{2i \text{Re} \hspace*{0.5mm} (z-z_0)^2 \lambda} \chi_{\lambda} f \|_{L^{\infty}} \leq \frac{1}{\lambda} \Big( \big\| \frac{\chi_{\lambda} f}{\bar{\cdot} - \bar{z}_0} \big\|_{L^{\infty}} + \big\| f \partial_{\bar{z}}( \frac{\chi_{\lambda}}{\bar{\cdot} - \bar{z}_0} ) \big\|_{L^{r}} + \big\| \frac{\chi_{\lambda} \partial_{\bar{z}}f }{\bar{\cdot} - \bar{z}_0} \big\|_{L^{r}} \Big) \\
& \leq \frac{1}{\lambda} \Big( \big\| \frac{\chi_{\lambda} f}{\bar{\cdot} - \bar{z}_0} \big\|_{L^{\infty}} + \big\| \frac{f \partial_{\bar{z}} \chi_{\lambda}}{\bar{\cdot} - \bar{z}_0} \big\|_{L^{r}} + \big\| \frac{\chi_{\lambda}f}{ | \hspace*{0.5mm} \bar{\cdot} - \bar{z}_0 |^2 } \big\|_{L^r} + \big\| \frac{\chi_{\lambda} \partial_{\bar{z}}f}{\bar{\cdot} - \bar{z}_0} \big\|_{L^{r}} \Big) \lesssim \frac{\| f \|_{W^{1,r}}}{\lambda^{\frac{1}{r}}}.
\end{split}
\end{alignat}
Combining the above with inequality (\ref{first split L inf estimate}) yields the second estimate in (\ref{Local estimate}). \par 
To obtain (\ref{Local estimate2}), it is enough to note that in this case, the identity
\begin{alignat}{2} \label{better integrate by parts}
\bar{R} e^{ -i z \cdot \omega} f = \frac{2}{i \omega} \Big(  e^{-i z \cdot \omega }f  +  \bar{R} \big(  e^{-iz \cdot \omega} \partial_{\bar{z}} f  \big)  \Big).
\end{alignat}
holds conveniently without the need for localisation. Since $p' \geq q$ for all $p \geq 4/3$ and $1/p + 1/q = 1$, one can estimate directly using Sobolev's inequality to get that
\begin{alignat*}{2}
\| f \|_{L^{q}} \lesssim \| f \|_{L^{p'}} \lesssim \| f \|_{W^{1,p}} \ \ \text{and} \ \ \| \bar{R} e^{-iz \cdot \omega} \partial_{\bar{z}} f \|_{L^{q}} \lesssim \| \partial_{\bar{z}}f \|_{L^{p}} \leq \| f \|_{W^{1,p}}. 
\end{alignat*}
Combining the above with (\ref{better integrate by parts}) proves the first estimate in (\ref{Local estimate2}) and the second one follows from the same argument by applying the embedding $W^{1,r} \hookrightarrow L^{\infty} $. Notice also that after changing variables and taking modulus, the bounds we obtain are independent of $z_0$. Thus we have arrived at the required claims.
\end{proof}

We now move on to the smoothing terms in (\ref{global splitting}). We do not provide much details because the argument will be analogous to the proof of Lemma \ref{Carleman estimate local lemma}. Since we only consider $\sigma \in \mathcal{C}^{\infty}_{c}(M_0'; T^{\star}_{0,1}M_0')$ which are supported on a local chart $\Omega$, by identifying $\sigma$ with its coefficients in this chart, it is sufficient to consider an operator $K: W^{k,p}(\Omega) \rightarrow \mathcal{C}^{\infty}(M)$ with smooth kernel, thus we prove
\begin{lemma} \label{smooth kernel carleman estimate lemma}
Assume that $(p,q,r)$ is in ${]}4/3, 2{[} \times {]}4, \infty{[} \times {]}2, \infty{[}$ and $1/2 + 1/q \geq 1/p > 1/2$, then there exists a constant $C>0$ independent of $p_0 \in \Omega_0$ such that
\begin{alignat}{2} \label{global Carleman estimate}
\| K e^{2i \text{Re} \hspace*{0.5mm} (z-z_0)^2  \lambda} f  \|_{L^{q}(M_0)} & \leq \frac{C \| f \|_{W^{1,p}(\Omega)}}{\lambda^{1- (\frac{1}{p}-\frac{1}{q})}} , \ \ \| K e^{2i \text{Re} \hspace*{0.5mm} (z-z_0)^2 \lambda}  f\|_{L^{\infty}(M_0)} \leq \frac{C \| f \|_{W^{1,r}(\Omega)}}{\lambda^{\frac{1}{r}}}
\end{alignat}
for all $f \in \mathcal{C}^{\infty}_{c}(\Omega)$. Alternatively, if $(p,q)$ is in ${]}4/3, 2{[} \times {]}2, 4{[}$ and $1/p + 1/q = 1$, then we have
\begin{alignat}{2}
\| K e^{-i z \cdot \omega} f \|_{L^{q}(M_0)} & \leq \frac{C \| f \|_{W^{1,p}(\Omega)}}{|\omega|}, \ \ \| K e^{-iz \cdot \omega} f \|_{L^{\infty}(M_0)} \leq \frac{C \| f \|_{W^{1,r} (\Omega) }}{|\omega|}
\end{alignat} 
for all $f \in \mathcal{C}^{\infty}_{c}(\Omega)$.
\end{lemma}
\begin{proof}
Let $\chi_{\lambda}$ be the compactly supported function defined in the proof of Lemma \ref{Carleman estimate local lemma}. Since $K$ has smooth kernel, by Minkowski's inequality it is obvious that $K$ satisfies the same boundedness properties as $\bar{R}$. For $1/2 + 1/q \geq 1/p > 1/2$ we may split as before to get 
\begin{alignat}{2} \label{global split}
K e^{2i \text{Re} \hspace*{0.5mm} (z-z_0)^2 \lambda} f = K e^{2i \text{Re} \hspace*{0.5mm} (z-z_0)^2 \lambda} \chi_{\lambda} f + K e^{2i \text{Re} \hspace*{0.5mm} (z-z_0)^2 \lambda} (1- \chi_{\lambda}) f.
\end{alignat}
The local term in (\ref{global split}) clearly satisfies
\begin{alignat*}{2}
\| K e^{2i\text{Re} \hspace*{0.5mm} (z-z_0)^2 \lambda } (1- \chi_{\lambda}) f \|_{L^{q}} \lesssim \frac{\| f \|_{W^{1,p}}}{\lambda^{1 - (\frac{1}{p} - \frac{1}{q})}}.
\end{alignat*}
Integrating by parts, we also see that there exists an operator $K'$ with smooth kernel so that
\begin{alignat*}{2}
Ke^{2i \text{Re} \hspace*{0.5mm} (z-z_0)^2 \lambda} \chi_{\lambda} f = \frac{i}{2\lambda} K' e^{2i \text{Re} \hspace*{0.5mm} (z-z_0)^2 \lambda} \frac{\chi_{\lambda} f}{\bar{z} - \bar{z}_0}  + \frac{i}{2 \lambda} K e^{2i \text{Re} \hspace*{0.5mm} (z-z_0)^2 \lambda} \partial_{\bar{z}} \big( \frac{ \chi_{\lambda} f}{\bar{z} - \bar{z}_0} \big).
\end{alignat*}
This is analogous to (\ref{integrate by parts}), so the proof now proceeds exactly as in Lemma \ref{Carleman estimate local lemma}, and we have
\begin{alignat*}{2}
\| K e^{2i \text{Re} \hspace*{0.5mm}(z-z_0)^2 \lambda} \chi_{\lambda} f \|_{L^{q}} \lesssim \frac{\| f \|_{W^{1,p}}}{\lambda^{1- (\frac{1}{p} - \frac{1}{q}) }}
\end{alignat*}
as well. The other claims follow similarly.
\end{proof}
\begin{proof}[Proof of Proposition \ref{Carleman Estimate}]
By compactness and the fact that $\Phi$ has non-vanishing derivative on $M$, we can find a finite collection of holomorphic charts $\{ \Omega_j \}_{j \geq 0}$ in $M$ which covers $M_0'$, such that $\Phi_{|_{\Omega_j}}(p) = z_j$ defines holomorphic coordinates on $\Omega_j$ for each $j \geq 0$. Let $\{ \chi_{j} \}_{j \geq 0}$ be a partition of unity subordinate to $\{ \Omega_{j} \}_{j \geq 0}$ and choose $ \{ \chi_{j}' \}_{j \geq 0} $ so that for each $j \geq 0$, $\chi_{j}'$ is supported in a neighbourhood of $\Omega_{j}$ on which $\Phi$ remains biholomorphic, and $\chi_{j}'$ is identically $1$ on the support of $\chi_{j}$, then by (\ref{global splitting}) we have that
\begin{alignat*}{2}
& e^{-2i \psi \lambda} Te^{2i\psi \lambda} \sigma = \sum_{j \geq 0}  e^{-2i \psi \lambda} \chi_{j}' e^{2i \text{Re} \hspace*{0.5mm} (z-z_0)^2 \lambda} \chi_j  \sigma +  e^{-2i \psi \lambda} K_{j} e^{2i \text{Re} \hspace*{0.5mm} (z-z_0)^2 \lambda} \chi_j \sigma \ \ \text{and} \\
& \hspace*{1.8cm} e^{i \Phi \cdot \omega} Te^{-i \Phi \cdot \omega} \sigma = \sum_{j \geq 0} e^{iz \cdot \omega} \chi_{j}' R e^{-i z \cdot \omega} \chi_{j} \sigma +e^{iz \cdot \omega} K_{j} e^{-i z \cdot \omega} \chi_j \sigma
\end{alignat*}
for all $\sigma \in \mathcal{C}_{c}^{\infty} (M_0'; T^{\star}_{0,1}M_0')$. We now apply Lemma \ref{Carleman estimate local lemma} and Lemma \ref{smooth kernel carleman estimate lemma} to get the required claims by density.
\end{proof}

We also deduce from Proposition \ref{Carleman Estimate} the following Carleman estimates.
\begin{corollary} \label{Carleman estimate corollary}
Assume that $(p,q,r)$ is in ${]}4/3, 2{[} \times {]}4, \infty{[} \times {]}2, \infty{[}$ and $1/2 + 1/q \geq 1/p > 1/2$, then there exists a constant $C>0$ independent of $p_0 \in \Omega_0$ such that
\begin{alignat}{2}  \label{Carleman estimate for the Laplacian}
\| \bar{T}_{\Psi} \tilde{\rho} \bar{T}^{\star}f \|_{L^{q}(M_0)} & \leq \frac{C \| f \|_{L^{p}(M_0)}}{\lambda^{1 - (\frac{1}{p} - \frac{1}{q})}}, \ \ \| \bar{T}_{\Psi} \tilde{\rho} \bar{T}^{\star} f \|_{L^{\infty}(M_0)} \leq \frac{C \| f \|_{L^{p}(M_0)}}{\lambda^{0+}}
\end{alignat}
for all $f \in L^{p}(M_0)$. Alternatively, if $(p,q)$ is in ${]}4/3, 2{[} \times {]}2, 4{[}$ and $1/p + 1/q = 1$, then we have
\begin{alignat}{2}
\| \bar{T}_{\Phi} \tilde{\rho} \bar{T}^{\star}f \|_{L^{q}(M_0)} & \leq \frac{ C \| f \|_{L^{p}(M_0)}}{|\omega|}, \ \ \| \bar{T}_{\Phi} \tilde{\rho} \bar{T}^{\star}f \|_{L^{\infty}(M_0)} \leq \frac{C \| f \|_{L^{p}(M_0)}}{|\omega|^{0+}}
\end{alignat}
for all $f \in L^{p}(M_0)$.
\end{corollary}
Here we adopt the notation $\alpha +$ to denote $\alpha + \epsilon$ for some $\epsilon > 0$ whenever $\alpha$ is real.
\begin{proof}
Suppose first that $f \in \mathcal{C}^{\infty}_{c}(M_0)$, then $ \tilde{\rho} \bar{T}^{\star}f \in \mathcal{C}^{\infty}_{c}(M_0'; T^{\star}_{0,1}M_0') $, thus the proofs for the $L^{p} \rightarrow L^{q}$ estimates in this case are obvious from Proposition \ref{Carleman Estimate}. On the other hand, we set $(p_0,p_1)$ to be such that $1 < p_0 < 4/3< p < 2 < p_1 $ and let $p_0' \in {]}2, 4{[}$ be defined by $1/p_0' = 1/p_0 - 1/2$, then by Sobolev inequalities and the boundedness of $\bar{T}$ and $\bar{T}^{\star}$, we have
\begin{alignat}{2} \label{interpolate 0}
\begin{split}
\| & \bar{T}_{\Psi} \tilde{\rho} \bar{T}^{\star} f \|_{L^{\infty}(M_0)} \lesssim \| \bar{T}_{\Psi} \tilde{\rho} \bar{T}^{\star}f \|_{W^{1,p_{0}'}(M)} \\[1mm] & \lesssim \| \tilde{\rho} \bar{T}^{\star}f \|_{L^{p'_0}(M_0';T^{\star}_{0,1}M_0')} \lesssim \| \bar{T}^{\star}f \|_{W^{1,p_0}(M ; T^{\star}_{0,1} M)} \lesssim \| f \|_{L^{p_0}(M_0)}.
\end{split}
\end{alignat}
On the other hand, a direct application of (\ref{dbar estimate}) yields
\begin{alignat}{2} \label{interpolate 1}
\| \bar{T}_{\Psi} \tilde{\rho} \bar{T}^{\star}f \|_{L^{\infty}(M_0)} \lesssim \frac{\| f \|_{L^{p_1}(M_0)}}{\lambda^{\frac{1}{p_1}}}.
\end{alignat}
Interpolating between (\ref{interpolate 0}) and (\ref{interpolate 1}) implies the second estimate in (\ref{Carleman estimate for the Laplacian}). We can get the other one using the same strategy. The claim now follows from density.
\end{proof}

It remains to show that the sums in (\ref{choice of remainder}) indeed converge. By iterating the $L^{p} \rightarrow L^{\infty}$ estimates in Corollary \ref{Carleman estimate corollary}, we see that there exists a constant $C>0$ such that 
\begin{alignat*}{2} 
& \| u_{j} \|_{L^{\infty}(M_0)}  \leq \Big( \frac{ C \| V_1 \|_{L^p} }{|\lambda|^{0+}} \Big)^{j-1} \| u_1 \|_{L^{\infty}(M_0 )} \ \ \text{for all} \ j \geq 2, \ \text{and} \\
& \hspace*{0.4cm} \| \tilde{u}_{j} \|_{L^{\infty}(M_0)} \leq \Big( \frac{C \| V_1 \|_{L^p} }{|\omega|^{0+}} \Big)^{j} \| a \|_{L^{\infty}(M_0 \times \Omega_0)} \ \ \text{for all} \ j \geq 0.
\end{alignat*}
Inserting the above inequalities into (\ref{series expansion solutions}), we get
\begin{alignat}{2} \label{solution estimates}
\begin{split}
& \big\| \sum_{j \geq 0}  (-1)^{j} u_{j} \big\|_{L^{\infty}(M_0)}  \leq  \sum_{0 \leq j < 2} \| u_{j} \|_{L^{\infty}(M_0)}  + \sum_{j \geq 2} \Big( \frac{ C \| V_1 \|_{L^{p}} }{\lambda^{0+}} \Big)^{j} \|u_{1}\|_{L^{\infty}(M_0)}, \ \ \text{and} \\
& \hspace*{2cm} \big\| \sum_{j \geq 0} (-1)^{j} \tilde{u}_{j} \big\|_{L^{\infty}(M_0)} \leq \sum_{j \geq 0} \Big( \frac{C \| V_{1} \|_{L^{p}}  }{|\omega|^{0+}} \Big)^j \| a \|_{L^{\infty}(M_0 \times \Omega_0)}.
\end{split}
\end{alignat}
We can further bound the $L^{\infty}$ norm of $u_1$ by Sobolev embedding so that
\begin{alignat*}{2}
\| u_1 \|_{L^{\infty}} \lesssim \| V_1 \|_{L^{p}} \| a \|_{L^{\infty}(M_0 \times \Omega_0)} + \max_{1 \leq j' \leq N} \sup_{p_0 \in \Omega_0} | Q^{+}_{j', \epsilon}(p_0)| \| b_{j'} \|_{L^{\infty}(M_0; T^{\star}_{1,0}M_0)} < \infty
\end{alignat*}
which is finite and so is $\| u_0 \|_{L^{\infty}(M_0 \times \Omega_0)}$. Thus we can find $\lambda_0 > 0$ and $\omega_0 \in \mathbb{C}$ such that for all $ \lambda > \lambda_0 $ and $ |\omega| > |\omega_0| $, the right hand sides of (\ref{solution estimates}) converge by geometric series. Hence $u$ and $\tilde{u}$ are in $L^{\infty}(M_0)$ for all sufficiently large $\lambda$ and $|\omega|$. Since $T_{\Psi} \tilde{\rho} T^{\star}$ and $T_{\Phi} \tilde{\rho} T^{\star}$ are bounded $L^{p}(M_0) \rightarrow W^{2, p}(M_0)$, we can now write
\begin{alignat*}{2}
& u = e^{i \Psi \lambda} a + T_{\Psi} \tilde{\rho} T^{\star} e^{i\Psi \lambda} V_1 r \ \ \text{and} \ \ \tilde{u} = e^{- \frac{i}{2} \Phi \bar{\omega}} a + T_{\Phi} \tilde{\rho} T^{\star} e^{- \frac{i}{2} \Phi \bar{\omega}} V_1 \tilde{r} .
\end{alignat*}
The embedding $W^{2,p}(M_0) \hookrightarrow H^1(M_0)$ then yields that $u$ and $\tilde{u}$ are $H^{1}(M_0)$ functions. Similar calculations work for $v$ and $\tilde{v}$. This concludes the proof of Proposition \ref{existence of CGO solutions}.
\end{proof}
\section{Improving Regularity}
In this section we prove Theorem \ref{improve regularity theorem}. Observe by compactness it suffices to show that $V \in L^{2}_{loc}(M_0)$. Indeed, we will show that
\begin{proposition} \label{assist theorem 2}
Any point in $M_0$ admits an open neighbourhood $\Omega_0 \subset M$ such that $V \in L^{2}(\Omega_0)$.  
\end{proposition}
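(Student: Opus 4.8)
To prove Proposition \ref{assist theorem 2} I would run Bukhgeim's reconstruction scheme, in the form of \cite{L4/3calderon}. Fix an arbitrary $\tilde p_0 \in M_0$ and keep the notation of Sections 2 and 3: let $\Phi$ be the holomorphic function without critical points with $\Phi(\tilde p_0) = 0$, fix a chart $\Omega_0 \ni \tilde p_0$ on which $\Phi$ is a holomorphic coordinate $z$ (so that $z_0 := \Phi(p_0)$), and fix a small $\epsilon > 0$. For $p_0 \in \Omega_0$ and $\lambda > \lambda_0$, Proposition \ref{existence of CGO solutions} applied with the phase $\Psi = (\Phi - \Phi(p_0))^2$ produces CGO solutions $u = e^{i\Psi\lambda}(a + r) \in \mathrm{Ker}\,(\Delta_g + V_1)$ and $v = e^{i\bar\Psi\lambda}(\bar a + s) \in \mathrm{Ker}\,(\Delta_g + V_2)$, with $a = a(\cdot\,;p_0)$ the holomorphic amplitude of Lemma \ref{special amplitude} and with $r,s$ depending on $p_0$ and on the still-undetermined amplitudes $Q^{\pm}_{j',\epsilon}(p_0)$. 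Feeding $u,v$ into the Alessandrini identity \eqref{Alessndrini identity} and using $e^{i\Psi\lambda}e^{i\bar\Psi\lambda} = e^{2i\psi\lambda}$ with $\psi = \mathrm{Re}\,\Psi$ gives
\begin{equation}\label{eq:bukh}
\int_{M_0} e^{2i\psi\lambda}\,|a|^2\,V\,d\text{v}_g \;=\; -\int_{M_0} e^{2i\psi\lambda}\,(a s + \bar a r + r s)\,V\,d\text{v}_g .
\end{equation}
In the coordinate $z$ one has $\psi = \mathrm{Re}\,(z - z_0)^2$, which has a single nondegenerate saddle at $z = z_0$, and by \eqref{condition for a}, $a(p;p) = \tilde\chi_0'(p)$, which is $1$ on a fixed neighbourhood of $\tilde p_0$; this is the Bukhgeim configuration on which everything rests.

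The plan is to study the Bukhgeim approximation $W_\lambda(p_0) := c\,\lambda \int_{M_0} e^{2i\psi\lambda}\,|a|^2\,V\,d\text{v}_g$, where $c$ is the reciprocal of the constant in the two-dimensional stationary phase expansion for the phase $2\psi$, and to prove that (i) $W_\lambda \to V$ in $\mathcal D'(\Omega_0)$ as $\lambda \to \infty$, and (ii) $\sup_{\lambda > \lambda_0}\|W_\lambda\|_{L^2(\Omega_0)} < \infty$. Granting these, $\{W_\lambda\}$ is bounded in the Hilbert space $L^2(\Omega_0)$, hence has a weakly convergent subsequence whose limit is $V$ by (i); so $V \in L^2(\Omega_0)$, and since $\tilde p_0$ was arbitrary this is exactly Proposition \ref{assist theorem 2}. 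Assertion (i) is the soft part: I would pair $W_\lambda$ with $\phi \in \mathcal C^\infty_c(\Omega_0)$, apply Fubini, and run stationary phase in the variable $p_0$ for fixed $p$ --- the phase $\psi(p;\cdot)$ has at most one nondegenerate critical point in $\mathrm{supp}\,\phi$, at which the amplitude is $|\tilde\chi_0'(p)|^2\phi$, forcing $p$ into a neighbourhood of $\tilde p_0$ where $\tilde\chi_0' \equiv 1$ --- which gives $\int_{M_0} W_\lambda\phi\,d\text{v}_g \to \int_{M_0} V\phi\,d\text{v}_g$ after multiplying by $c\lambda$ and integrating against $V \in L^p$.

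Assertion (ii) is the heart of the matter, and I would read it off the right-hand side of \eqref{eq:bukh}: the task is to bound $\lambda$ times each of $\int e^{2i\psi\lambda}\bar a r V$, $\int e^{2i\psi\lambda}a s V$, $\int e^{2i\psi\lambda}r s V$ in $L^2(d\text{v}_g(p_0))$, uniformly in $\lambda$. Unwinding $r = -u_1 + \dots$ with $u_1 = T_\Psi\tilde\rho(T^\star V_1 a - \sum_{j'} Q^{+}_{j',\epsilon}(p_0)\,b_{j'})$, the splitting \eqref{global splitting}, and $T_\Psi = \tfrac12 e^{-2i\psi\lambda}T e^{2i\psi\lambda}$, the leading cross term $-c\lambda\int e^{2i\psi\lambda}\bar a u_1 V$ takes, modulo smoothing errors controlled by Minkowski's inequality as in Lemma \ref{smooth kernel carleman estimate lemma}, the form $-\tfrac c2\lambda\int T^\flat(\bar a V)\cdot e^{2i\psi\lambda}\tilde\rho\,(T^\star V_1 a - \sum_{j'} Q^{+}_{j',\epsilon}(p_0)\,b_{j'})$, where $T^\flat$ is the $\bar R$-type operator transferred off the first factor. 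Now $T^\flat(\bar a V)$ and $T^\star V_1 a$ lie in $W^{1,p} \hookrightarrow L^{p^\ast}$ with $1/p^\ast = 1/p - 1/2$, and since $p > 4/3$ is exactly equivalent to $p^\ast > 4$, the ensuing oscillatory integral falls in the admissible range of Lemma \ref{Carleman estimate local lemma} and Corollary \ref{Carleman estimate corollary} --- here the saddle-point integration by parts \eqref{integrate by parts} supplies the missing power of $\lambda$. The amplitudes $Q^{\pm}_{j',\epsilon}(p_0)$, which I would take to be fixed-scale mollifications of $V_1 a$, respectively $V_2\bar a$, about the $N+1$ preimages of $\Phi(p_0)$, should be chosen so that the one contribution of this integral that is neither $\lambda$-decaying nor $L^2(\Omega_0)$-small --- the residue of the Cauchy kernel $1/(\bar z - \bar z_0)$ at each preimage --- is converted into an operator bounded $L^p(M_0) \to L^2(\Omega_0)$ uniformly in $\lambda$ (Young's inequality, for the fixed $\epsilon$) instead of into a copy of $W_\lambda$; all remaining terms, including the higher iterates $u_j,v_j$ ($j \ge 2$) and the quadratic $r s V$, decay genuinely in $\lambda$ and sum geometrically as in Proposition \ref{existence of CGO solutions}.

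The hard part is exactly this circularity: without the $Q^{\pm}_{j',\epsilon}$, the leading cross term $\lambda\int e^{2i\psi\lambda}\bar a r V$ reproduces $W_\lambda$ up to a bounded factor --- i.e. recovers $V$ from $V$ --- so a direct estimate is vacuous, while throwing away the oscillation fails outright, since the best decay of $r$ is $\lambda^{-1 + (1/p - 1/q)}$ with $q > 4$ forced by Corollary \ref{Carleman estimate corollary}, which the prefactor $\lambda$ overwhelms. Breaking the circularity is precisely the role of the $Q^{\pm}_{j',\epsilon}$, and keeping $\epsilon > 0$ fixed is what makes the bound a priori finite, since for fixed $\epsilon$ the mollified potentials $V_i \ast \rho_\epsilon$ sit in $L^2$ with norm $\lesssim_\epsilon \|V_i\|_{L^p}$ by Young's inequality. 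The remaining work --- arranging the oscillatory integral estimates with the partitions of unity, tracking the precise powers of $\lambda$ and index triples $(p,q,r)$ through the Neumann iteration, and checking the cancellation of the residue term --- is routine along the lines of Lemmas \ref{Carleman estimate local lemma}--\ref{smooth kernel carleman estimate lemma} and the argument of \cite{L4/3calderon}.
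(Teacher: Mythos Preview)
Your approach diverges from the paper's in one decisive way: you run the Bukhgeim scheme with the \emph{quadratic} phase $\Psi=(\Phi-\Phi(p_0))^2$, whereas the paper proves this proposition with the \emph{linear} phase solutions $\tilde u=e^{-\frac{i}{2}\Phi\bar\omega}(a+\tilde r)$, $\tilde v=e^{-\frac{i}{2}\bar\Phi\omega}(\bar a+\tilde s)$, multiplies the Alessandrini identity by $e^{-\epsilon|\omega|^2}e^{i\Phi(p_0)\cdot\omega}$, integrates in $\omega$, and lets $\epsilon\to0$. The principal term then reduces to Fourier inversion, and the remainder terms are controlled because the linear-phase Carleman estimate (the second line of Corollary~\ref{Carleman estimate corollary}) delivers a \emph{full} power $|\omega|^{-1}$ with $q$ allowed in $]2,4[$; this makes $|\tilde I_{k,k'}|\lesssim|\omega|^{-1-}$ for $k+k'\ge2$, hence $L^2$ in $\omega$, hence $L^2$ in $p_0$ by Plancherel.

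Your argument breaks at the step you dismiss as routine: ``the quadratic $rsV$ \dots\ decay[s] genuinely in $\lambda$''. It does not. Estimating $\lambda|I_{1,1}|=\lambda\bigl|\int V u_1 v_1\,e^{2i\psi\lambda}\bigr|$ pointwise in $p_0$ via H\"older forces $\|V\|_{L^p}\|u_1\|_{L^{q_1}}\|v_1\|_{L^{q_2}}$ with $1/q_1+1/q_2=1-1/p>1/2$ (since $p<2$), but the quadratic-phase Carleman estimate of Corollary~\ref{Carleman estimate corollary} is only available for $q_i>4$, i.e.\ $1/q_1+1/q_2<1/2$. No admissible splitting exists; the $L^\infty$ endpoint does not help either, since its partner would need $q=p'\in]2,4[$, again outside the range. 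In the paper's Section~5 this obstruction is overcome by pairing against $\|V\|_{L^2}$ --- exactly what you are trying to prove. The Fubini trick you use on $I_{1,0}$ is not available for $I_{1,1}$ because after transferring one $T$ the remaining factor $v_1$ still depends on $p_0$ through the phase (not polynomially), so the Taylor-expansion reduction to a unimodular Fourier multiplier collapses. The $Q^{\pm}_{j',\epsilon}$ do nothing for this term. This is precisely why the paper introduces the second family of CGOs with linear phase: the identity \eqref{better integrate by parts} has no $(\bar z-\bar z_0)^{-1}$ weight, so integration by parts costs nothing and the Carleman range opens up to $q\in]2,4[$, which is what makes the $L^p$--$L^{p'}$ pairing close.
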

\begin{proof}
Implementing the $L^{\infty}(M_0) \cap H^{1}(M_0)$ solutions $\tilde{u}$ and $\tilde{v}$ from Proposition \ref{existence of CGO solutions} into identity (\ref{Alessndrini identity}), we have 
\begin{alignat}{2} \label{Improvement expansion}
0 = \int_{M_0} \tilde{u} V \tilde{v} \hspace*{0.5mm} d\text{v}_{g} = \int_{M_0} e^{-i \Phi \cdot \omega} |a|^2 V \hspace*{0.5mm} d\text{v}_{g} +   \sum_{k + k' \geq 1}  \int_{M_0}  e^{- i \Phi \cdot \omega} \tilde{u}_{k} V \tilde{v}_{k'} \hspace*{0.5mm} d\text{v}_{g} 
\end{alignat}
where we switched the sum and integral by boundedness. Let $\Phi: M \rightarrow \mathbb{C}$ be the holomorphic function without critical point chosen in Section 2 and $\tilde{p}_0$ be an arbitrary point in $M_0$. Assume without loss of generality that $\Phi(\tilde{p}_0) = 0$. Let $ \{ \tilde{\Omega}_{j'} \}_{1 \leq j' \leq N} $ be the holomorphic charts constructed at the beginning of Section 2.3, and $ \{ \Omega_j \}_{j \geq 0} $ be an open covering of $M_0'$ in $M$ such that $\Phi: \Omega_j \rightarrow \Phi(\Omega_j)$ is biholomorphic for each $j \geq 0$. Fix a partition of unity $\{ \chi_j \}_{j \geq 0}$ subordinate to $\{ \Omega_{j} \}_{j \geq 0}$ and choose $\{ \chi_{j}' \}_{j \geq 0}$ so that $\chi_{j}'$ is supported on a holomorphic chart with coordinate map $\Phi$ and is identically $1$ on the support of $\chi_j$. We can modify the definitions by setting $\Omega_0 = \tilde{\Omega}_0$. 
\par 
Choose $\omega_0 \in \mathbb{C}$ so that $|\omega_0|$ is sufficiently large. Let $\rho$ be be a smooth function which vanishes on the set $|\omega| \leq |\omega_0| $. For $\epsilon > 0$ we can multiply both sides of equation (\ref{Improvement expansion}) by the weight $\mathbf{1}_{\Omega_0}(p_0) \rho(\omega) e^{-\epsilon |\omega|^2} e^{i \Phi(p_0) \cdot \omega} $ and integrate to get 
\begin{alignat}{2}
\begin{split}
 \label{modifiedi improv equation}
\mathbf{1}_{\Omega_0} (p_0) & \int_{|\omega| > |\omega_0|} \rho(\omega) e^{-\epsilon|\omega|^2}  e^{i \Phi(p_0) \cdot \omega} \int_{M_0} e^{-i \Phi \cdot \omega} |a|^2 V \hspace*{0.5mm} d\text{v}_{g} \\
& = - \mathbf{1}_{\Omega_0}(p_0) \int_{|\omega| > |\omega_0|} \rho(\omega) e^{-\epsilon |\omega|^2} e^{i \Phi(p_0) \cdot \omega} \sum_{k + k' \geq 1} \int_{M_0} e^{i \Phi(p_0) \cdot \omega} \tilde{u}_{k} V \tilde{v}_{k'} \hspace*{0.5mm} d\text{v}_{g}.
\end{split}
\end{alignat}
We now want to take a limit as $\epsilon \rightarrow 0$ in order to apply a Fourier inversion argument. By our construction of $a$, we show that the left hand side of (\ref{modifiedi improv equation}) converges in $L^1(M)$ to $\mathbf{1}_{\Omega_0} V |g|^{1/2}$ where $|g|$ is the volume component of $g$ in $\Omega_0$, while the right hand side converges to a $L^2(\Omega_0)$ function. The extra complexity is that the amplitude $a$ depends nonlinearly on both $p$ and $p_0$ while $V$ might not even be continuous. We resolve this problem by relying on the Taylor expansion introduced in Lemma (\ref{Taylor expansion Lemma}) which reduces the problem to the usual Fourier inversion theorem. Nevertheless, a more careful computation is required.
\subsection{Analysis of Principle Terms}
We first consider the left hand side of (\ref{modifiedi improv equation}). By the change of variables $\Phi_{|_{\Omega_0}}(p) = z_0$, we can integrate over the $\omega$ variable to see that
\begin{alignat}{2} \label{first improv expansion}
\begin{split}
\mathbf{1}_{\Omega_0} (&p_0  )  \int_{|\omega| \geq |\omega_0|}  \rho(\omega) e^{-\epsilon |\omega|^2} e^{- i \Phi(p_0) \cdot \omega} \Big( \int_{M_0} e^{-i \Phi \cdot \omega} |a|^2 V \hspace*{0.5mm} d\text{v}_{g} \Big) d\omega d\bar{\omega} \\
& \hspace*{0.77cm} = \mathbf{1}_{D_{r}} (z_0) \int_{\mathbb{C}} e^{-\epsilon |\omega|^2} e^{- i z_0 \cdot \omega} \Big( \int_{M_0} e^{-i \Phi \cdot \omega } |a|^2 V \hspace*{0.5mm} d\text{v}_{g} \hspace*{0.5mm} \Big) d \omega  d \bar{\omega} \\
 &  \hspace*{0.8cm} - \mathbf{1}_{D_r}(z_0) \int_{|\omega| < |\omega_0|} (1-\rho)(\omega ) e^{-\epsilon |\omega|^2} e^{-i z_0 \cdot \omega} \Big( \int_{M_0} e^{-i \Phi \cdot \omega} |a|^2 V \hspace*{0.5mm} d\text{v}_g \Big) \hspace*{0.5mm} d\omega  d\bar{\omega}.
\end{split}
\end{alignat}
We first look at the second line of (\ref{first improv expansion}), from which we shall obtain local information about $V$. This is summarised in the following technical lemma. Notice that by extending $V_1,V_2$ to $M$ via zero, it is enough to show
\begin{lemma} \label{principle term calculation improve lemma}
We have that
\begin{alignat}{2} \label{principle term calculation improve}
\lim_{\epsilon \rightarrow 0} \mathbf{1}_{\Omega_0}(p_0) \int_{\mathbb{C}} e^{-\epsilon |\omega|^2} e^{i \Phi(p_0) \cdot \omega  } \Big( \int_{M} e^{-i \Phi \cdot \omega} |a|^2 V \hspace*{0.5mm} d\text{\emph v}_{g} \Big) \hspace*{0.5mm} d\omega  d\bar{\omega} = \mathbf{1}_{\Omega_0} V |g|^{1/2}.
\end{alignat}
in $L^1(M)$.
\end{lemma}
\begin{proof}
Set
\begin{alignat*}{2}
\tilde{M} \ \mydef \ M \big\backslash \big( \bigcup_{1 \leq j' \leq N} \tilde{\Omega}_{j'} \big) 
\end{alignat*}
and introduce the splitting
\begin{alignat}{2} \label{improv first expansion}
\begin{split}
\mathbf{1}_{\Omega_0}(p_0) \int_{\mathbb{C}} e^{-\epsilon |\omega|^2}  e^{i \Phi(p_0) \cdot \omega  } & \Big( \int_{M} e^{-i \Phi \cdot \omega} |a|^2 V \hspace*{0.5mm} d\text{v}_{g} \Big) \hspace*{0.5mm} d\omega  d\bar{\omega} \\[1mm]
= \mathbf{1}_{\Omega_0}(p_0) \sum_{1 \leq j' \leq N} \int_{\mathbb{C}} & e^{-\epsilon |\omega|^2} e^{i \Phi(p_0) \cdot \omega} \Big( \int_{\tilde{\Omega}_{j'}} e^{-i \Phi \cdot \omega} |a|^2 V \hspace*{0.5mm} d\text{v}_{g} \Big) \hspace*{0.5mm} d\omega  d \bar{\omega} \\
& + \mathbf{1}_{\Omega_0}(p_0) \sum_{j \geq 0} \int_{\mathbb{C}} e^{-\epsilon |\omega|^2} e^{i \Phi(p_0) \cdot \omega} \Big( \int_{\tilde{M}} \chi_{j} e^{-i \Phi \cdot \omega }|a|^2 V \hspace*{0.5mm} d\text{v}_{g} \Big) \hspace*{0.5mm} d\omega d\bar{\omega}.
\end{split}
\end{alignat}
On each $\tilde{\Omega}_j$ we make the change of variable so that $\Phi_{|_{\tilde{\Omega}_{j'}}}(p) = z_{j'}$ and write
\begin{alignat}{2} \label{recovery principle calculation improv}
\begin{split}
 \mathbf{1}_{\Omega_0}(p_0) & \int_{\mathbb{C}} e^{-\epsilon |\omega|^2} e^{i \Phi(p_0) \cdot \omega} \Big( \int_{\tilde{\Omega}_{j'}} e^{-i \Phi \cdot \omega} |a|^2 V \hspace*{0.5mm} d\text{v}_{g} \Big) \hspace*{0.5mm} d\omega d \bar{\omega} \\
& = \mathbf{1}_{D_r}(z_0)   \int_{\mathbb{C}} e^{-\epsilon |\omega|^2} e^{i z_0 \cdot \omega} \Big( \int_{D_r}  e^{-iz_{j'} \cdot \omega} | a( \Phi_{|_{\tilde{\Omega}_{j'}}}^{-1}(z_{j'}) ; \Phi_{|_{\tilde{\Omega}_{0}}}^{-1} (z_0) ) |^2  \\[1mm]
& \hspace*{6cm} \times V( \Phi_{|_{\tilde{\Omega}_{j'}}}^{-1}(z_{j'}) ) \hspace*{1mm} |g_{j'}|^{1/2} dz_{j'} d\bar{z}_{j'} \Big) \hspace*{0.5mm} d\omega  d\bar{\omega}.
\end{split}
\end{alignat}
We recall the Taylor expansion from Lemma \ref{Taylor expansion Lemma}
\begin{alignat}{2} \label{holomorphic expansion of a}
 a( p; \Phi_{|_{\tilde{\Omega}_{0}}}^{-1}(z_0) ) = \sum_{\mu \geq 0} a_{\mu} (p ) z_{0}^{\mu}, \ \ z_0 \in D_r
\end{alignat}
with holomorphic coefficients on $M$. One thus observe from (\ref{estimate for Taylor expansion}) and the dominated convergence theorem that (\ref{recovery principle calculation improv}) can be written as  
\begin{alignat}{2} \label{expanded integral equation}
\begin{split}
 \sum_{\mu, \mu' \geq 0} & \mathbf{1}_{D_r}(z_0)  z_{0}^{\mu} \bar{z}_{0}^{\mu'} \int_{\mathbb{C}} e^{-\epsilon |\omega|^2} e^{i z_0 \cdot \omega} \\
& \times \Big( \int_{D_r} e^{-iz_{j'} \cdot \omega} a_{\mu}( \Phi_{|_{\tilde{\Omega}_{j'}}}^{-1} (z_{j'}) )  \bar{a}_{\mu'}( \Phi_{|_{\tilde{\Omega}_{j'}}}^{-1} (z_{j'}) )  V( \Phi_{|_{\tilde{\Omega}_{j'}}}^{-1}(z_{j'}) ) \hspace*{0.5mm} |g_{j'}|^{1/2} dz_{j'} d\bar{z}_{j'} \Big) \hspace*{0.5mm} d\omega  d \bar{\omega},
\end{split}
\end{alignat}
and so it can be easily seen from (\ref{estimate for Taylor expansion}) combined with (\ref{expanded integral equation}) that
\begin{alignat}{2} \label{estimate fourier recover 1}
\begin{split}
\Big\| & \int_{\mathbb{C}}  e^{-\epsilon |\omega|^2}  e^{i z_0 \cdot \omega} \Big( \int_{D_r} e^{-i z_{j'} \cdot \omega}|a( \Phi_{|_{\tilde{\Omega}_{j'}}}^{-1}(z_{j'}); \Phi_{|_{\Omega_0}}^{-1}(z_0) )|^2 V(\Phi_{|_{\tilde{\Omega}_{j'}}}^{-1}(z_{j'})) \hspace*{0.5mm} |g_{j'}|^{1/2} dz_{j'} d\bar{z}_{j'} \Big) \hspace*{0.5mm} d\omega  d\bar{\omega} \\
& \hspace*{3.5cm} - \mathbf{1}_{D_r}(z_0) [ |a( \Phi_{|_{\tilde{\Omega}_{j'}}}^{-1}(z_{j'});  \Phi_{|_{\Omega_{0}}}^{-1}(z_0) )|^2  V(\Phi_{|_{\tilde{\Omega}_{j'}}}^{-1}(z_{j'})) |g_{j'}|^{1/2}]_{|_{z_{j'} = z_0}}  \Big\|_{L^1(D_r)} \\
& \lesssim \sum_{\mu, \mu' \geq 0} r^{\mu + \mu'} \| o_{L^1,\epsilon}^{(\mu,\mu')}(1) \|_{L^{1}(D_r)} \\
&  \lesssim \Big( \sum_{\mu, \mu' \geq 0} r^{\mu + \mu'} \| a_{\mu} \|_{L^{\infty}(M_0)} \| a_{\mu'} \|_{L^{\infty}(M_0)} \Big) \big( \epsilon^{-1} \| e^{- \frac{|\omega|^2}{4\epsilon}} \|_{L^1} + 1 \big)  \| V_1 \|_{L^{1}(M_0)} 
\end{split}
\end{alignat}
can be bounded independently of $\epsilon$ in view of the Gaussian approximation of the identity, and 
\begin{alignat*}{2}
o_{L^{1},\epsilon}^{(\mu,\mu')} & (1) = \mathcal{F}^{-1}  e^{-\epsilon |\omega|^2}  \mathcal{F} [ \mathbf{1}_{D_r} a_{\mu}( \Phi_{|_{\tilde{\Omega}_{j'}}}^{-1}(z_{j'}) ) \bar{a}_{\mu'}( \Phi_{|_{\tilde{\Omega}_{j'}}}^{-1}(z_{j'}) ) V( \Phi_{|_{\tilde{\Omega}_{j'}}}^{-1}(z_{j'}) ) |g_{j'}|^{1/2} ] \\
 & - \mathbf{1}_{D_r}(z_0) [ a_{\mu}( \Phi_{|_{\tilde{\Omega}_{j'}}}^{-1}(z_{j'}) ) \bar{a}_{\mu'}( \Phi_{|_{\tilde{\Omega}_{j'}}}^{-1}(z_{j'}) ) V( \Phi_{|_{\tilde{\Omega}_{j'}}}^{-1}(z_{j'}) ) |g_{j'}|^{1/2}  ]_{|_{z_{j'} = z_0}} \rightarrow 0 \ \text{as $\epsilon \rightarrow 0$}
\end{alignat*}
in $L^{1}$ by the Fourier inversion theorem. Thus the limit as $\epsilon \rightarrow 0$ can be switched with the infinite sum in the second line of (\ref{estimate fourier recover 1}), from which we may conclude that
\begin{alignat}{2} \label{principle recovery impro}
\begin{split}
\lim_{\epsilon \rightarrow 0} \mathbf{1}_{\Omega_0}(p_0) \int_{\mathbb{C}} & e^{-\epsilon |\omega|^2}  e^{i \Phi(p_0) \cdot \omega}  \Big( \int_{\tilde{\Omega}_{j'}} e^{i\Phi \cdot \omega} |a|^2 V \hspace*{0.5mm} d\text{v}_{g} \Big) \hspace*{0.5mm} d\omega  d\bar{\omega} \\
& = \mathbf{1}_{D_{r}}(z_0) [  |a( \Phi_{|_{\tilde{\Omega}_{j'}}}^{-1} (z_{j'}) ; \Phi_{|_{{\Omega}_{0}}}^{-1} (z_0) )|^2 V(\Phi_{|_{\tilde{\Omega}_{j'}}}^{-1}(z_{j'})  |g_{j'}|^{1/2} ]_{_{z_{j'}=z_0}}
\end{split}
\end{alignat}
in $L^1(M)$. On the other hand, since
\begin{alignat*}{2}
\Phi \big( \Phi_{|_{\tilde{\Omega}_{j'}}}^{-1}(z_0) \big) = \Phi \big( \Phi_{|_{\Omega_{0}}}^{-1}(z_0)  \big) \ \ \text{for all} \ \ 1 \leq j' \leq N,
\end{alignat*}
by the construction of $a$ in (\ref{condition for a}), we obviously have the property
\begin{alignat}{2} \label{delta property of a}
a( \Phi_{|_{\tilde{\Omega}_{j'}}}^{-1}(z_{0}) ; \Phi^{-1}_{|_{\Omega_{0}}}(z_0) )  = 
\begin{cases}
1, \ \ \text{if} \ \ j' = 0, \ \ \text{and} \\[1mm]
0, \ \ \text{if} \ \ j' \neq 0,
\end{cases}
\end{alignat}
thus we have from (\ref{principle recovery impro}) that 
\begin{alignat}{2} \label{recovery princple cal improv part 2}
\lim_{\epsilon \rightarrow 0} \mathbf{1}_{\Omega_{0}}(p_0) \sum_{1 \leq j' \leq N} \int_{\mathbb{C}} e^{-\epsilon |\omega|^2} e^{i \Phi(p_0) \cdot \omega} \Big( \int_{\Omega_{j'}} e^{-i \Phi \cdot \omega} |a|^2 V \hspace*{0.5mm} d\text{v}_{g} \Big) \hspace*{0.5mm} d\omega  d\bar{\omega} = \mathbf{1}_{\Omega_0} V |g|^{1/2}
\end{alignat}
in $L^1(M)$. To complete the proof, we note that by construction, $\tilde{M}$ contains no point $p \in M_0$ for which $\Phi(p) \in D_{r}$ so that $\Phi(\tilde{M} \cap M_0)$ is disjoint from $D_r$. Thus with the same calculations as above we can conclude that 
\begin{alignat*}{2}
\lim_{\epsilon \rightarrow 0}   \mathbf{1}_{\Omega_0}(p_0)  &   \int_{\mathbb{C}}  e^{-\epsilon |\omega|^2}  e^{i \Phi(p_0) \cdot \omega} \Big( \int_{\tilde{M}} \chi_{j} e^{-i \Phi \cdot \omega} |a|^2 V \hspace*{0.5mm} d\text{v}_g \Big) \hspace*{0.5mm} d\omega d\bar{\omega}  = \sum_{\mu, \mu' \geq 0} \mathbf{1}_{D_{r}} \mathbf{1}_{\Phi( \tilde{M} \hspace*{0.5mm} \cap \hspace*{0.5mm} M_0 \hspace*{0.5mm} \cap \hspace*{0.5mm} \Omega_j  )} z_{0}^{\mu} \bar{z}_{0}^{\mu'} \\
& \hspace*{2cm} \times  [\chi_{j}(\Phi_{|_{\Omega_{j}}}^{-1}(z_{j}))  a_{\mu}( \Phi_{|_{\Omega_{j}}}^{-1}(z_{j}) ) \bar{a}_{\mu'}( \Phi_{|_{\Omega_{j}}}^{-1}(z_{j}) ) V(\Phi_{|_{\Omega_{j}}}^{-1} (z_{j})) |g_{j}|^{1/2}]_{|_{z_{j}=z_0}} = 0
\end{alignat*}
for each $j \geq 0$. Applying the above equation and (\ref{recovery princple cal improv part 2}) we arrive from (\ref{improv first expansion}) at (\ref{principle term calculation improve}).
\end{proof}

On the other hand, to take care of the last line in (\ref{first improv expansion}), we note that since $(1-\rho)$ is compactly supported, one can take the limit in $\epsilon$ directly inside the integral to arrive at 
\begin{alignat}{2} \label{smoother term in principle improv}
\begin{split}
\lim_{\epsilon \rightarrow 0} \mathbf{1}_{\Omega_0}(p_0) & \int_{|\omega| < |\omega_0|} (1-\rho) (\omega) e^{-\epsilon |\omega|^2} e^{-i \Phi(p_0) \cdot \omega} \Big( \int_{M_0} e^{-i \Phi \cdot \omega} |a|^2 V \hspace*{0.5mm} d\text{v}_g \Big) \hspace*{0.5mm} d\omega  d\bar{\omega} \\[1mm]
& = \mathbf{1}_{D_r}(z_0) \int_{|\omega| < |\omega_0|} (1-\rho)(\omega) e^{-iz_0 \cdot \omega} \Big( \int_{M_0} \chi_j e^{-i \Phi \cdot \omega} |a|^2 V d\text{v}_{g} \Big) \hspace*{0.5mm} d\omega d\bar{\omega}.
\end{split}
\end{alignat}
One is therefore able to differentiate under the integrals to conclude that the later integral depends smoothly on $z_0 \in D_r$. By compactness it follows that (\ref{smoother term in principle improv}) belongs to $L^{\infty}(\Omega_0)$. This concludes our analysis of the principle terms. 
\subsection{Analysis of Remainder Terms}
To take care of the right hand side of (\ref{modifiedi improv equation}), we refine the structure of the solutions constructed in Section 3. Let $\{ a_{\mu} \}_{\mu \geq 0}$ be the holomorphic coefficients of $a$. We set
\begin{alignat*}{2}
\tilde{u}_0^{(\mu)} \ \mydef \ a_{\mu}, \ \ \tilde{v}_{0}^{(\mu)} \ \mydef \ \bar{a}_{\mu} \ \ \text{and} \ \ \tilde{u}_{j}^{(\mu)} \ \mydef \ T_{\Phi} \tilde{\rho} T^{\star}(V_1 \tilde{u}_{j-1}^{(\mu)}), \ \ \tilde{v}_{j}^{(\mu)} \ \mydef \ \bar{T}_{\Phi} \tilde{\rho} \bar{T}^{\star}(V_2 \tilde{v}_{j-1}^{(\mu)}), \ \ j \geq 1
\end{alignat*}
and introduce the notations
\begin{alignat*}{2}
\tilde{I}_{k,k'} \ & \mydef \ \int_{M_0} e^{-i \Phi \cdot \omega} V \tilde{u}_{k} \tilde{v}_{k'} \hspace*{0.5mm} d\text{v}_{g} \ \ \text{and} \ \ \tilde{I}_{k,k'}^{\mu,\mu'} \ & \mydef \ \int_{M_0} e^{-i \Phi \cdot \omega} V \tilde{u}_{k}^{(\mu)} \tilde{v}_{k'}^{(\mu')}\hspace*{0.5mm} d\text{v}_{g}.
\end{alignat*}
Since $T_{\Phi} \tilde{\rho} T^{\star} : L^{p}(M_0) \rightarrow L^{\infty}(M_0)$ is bounded. Using (\ref{estimate for Taylor expansion}) it is clear that we have
\begin{alignat*}{2}
\tilde{u}_{j} =  \sum_{\mu \geq 0} \tilde{u}_{j}^{(\mu)} z_{0}^{\mu} \ \ \text{and} \ \ \tilde{v}_{j} =  \sum_{\mu \geq 0} \tilde{v}_{j}^{(\mu)} \bar{z}_{0}^{\mu}
\end{alignat*}
with convergence in $L^{\infty}(M_0)$ for all $p_0 \in \Omega_0$ and every $j \geq 0$. Our procedure for the cases $(k,k') = (1,0)$ or $(k,k') = (0,1)$ require additional arguments.
\begin{lemma} \label{remainder lemma improve}
The limits
\begin{alignat*}{2} 
\lim_{\epsilon \rightarrow 0} \mathbf{1}_{\Omega_0}(p_0) \int_{\mathbb{C}} e^{-\epsilon |\omega|^2 } e^{i \Phi(p_0) \cdot \omega} \tilde{I}_{1,0} \hspace*{0.5mm} d\omega d\bar{\omega} \ \ \text{and} \ \ \lim_{\epsilon \rightarrow 0} \mathbf{1}_{\Omega_0}(p_0) \int_{\mathbb{C}} e^{-\epsilon |\omega|^2} e^{i \Phi(p_0) \cdot \omega}  \tilde{I}_{0,1} \hspace*{0.5mm} d\omega d \bar{\omega}  
\end{alignat*}
exist in $L^{2}(M)$.
\end{lemma}
\begin{proof}
As before we extend $V_1, V_2$ to $M$ by zeros. Decomposing via the partition of unity $\{ \Omega_j, \chi_j, \chi_{j}' \}_{j \geq 0}$, since $\tilde{\rho}$ is compactly supported in $M_0'$, we have
\begin{alignat}{2}  \label{recovery princple cal improv part 3} 
\begin{split}
 \tilde{I}_{1,0} = \sum_{j \geq 0} &   \int_{M} \bar{a} V T ( \chi_j e^{-i \Phi \cdot \omega} \tilde{\rho}  T^{\star}  V_1 a) \hspace*{0.5mm} d\text{v}_{g} \\
& = \sum_{j \geq 0} \int_{M} \chi_{j}' \bar{a}V  R( \chi_j e^{-i \Phi \cdot \omega} \tilde{\rho} T^{\star} V_1 a)   \hspace*{0.5mm} d\text{v}_{g} + \sum_{j \geq 0}  \int_{M} \bar{a} V K_j ( \chi_j e^{-i \Phi \cdot \omega} \tilde{\rho} T^{\star} V_1 a ) \hspace*{0.5mm} d\text{v}_{g}.
\end{split}
\end{alignat}
Making the change of variables $\Phi_{|_{\Omega_j}}(p) = z_j$ and apply again the Taylor expansion of $a$, we can obtain from Fubini's theorem that
\begin{alignat}{2} \label{recovery princple cal improv part 4}
\begin{split}
\mathbf{1}_{\Omega_0} & (p_0) \int_{M} \chi_{j}' \bar{a}V R ( \chi_{j} e^{-i \Phi \cdot \omega} \tilde{\rho} T^{\star} V_1 a) \hspace*{0.5mm} d\text{v}_{g} + 1_{\Omega_0}(p_0) \int_{M} \bar{a}V K_j( \chi_j e^{-i \Phi \cdot \omega} \tilde{\rho}  T^{\star} V_1 a ) \hspace*{0.5mm} d\text{v}_{g}  \\[1mm]
& \hspace*{0.5cm} = - \mathbf{1}_{D_r}(z_0) \sum_{\mu, \mu' \geq 0} \bar{z}_0^{\mu} z_{0}^{\mu'} \int_{\Omega_j} \chi_j e^{-iz_j \cdot \omega} R( \chi_{j}' V \bar{a}_{\mu} |g_{j}|^{1/2} ) \tilde{\rho} T^{\star}(  V_1 a_{\mu'} ) \hspace*{0.5mm} dz_{j} d\bar{z}_{j}  \\
& \hspace*{3.2cm}  + \mathbf{1}_{D_r}(z_0) \sum_{\mu, \mu' \geq 0} \bar{z}_{0}^{\mu} z_{0}^{\mu'} \int_{\Omega_j} \chi_j e^{-i z_j \cdot \omega} K'_j( V \bar{a}_{\mu} ) \tilde{\rho} T^{\star} ( V_1 a_{\mu'} ) \hspace*{0.5mm} d\text{v}_g,
\end{split} 
\end{alignat}
for each $j \geq 0$, where we identify the $1$-forms $ \chi_j T^{\star}( V a_{\mu'} ) $ with their coefficients in the coordinates $z_j$ and $K_j'$ has smooth kernel. Multiplying on both sides of (\ref{recovery princple cal improv part 4}) by $e^{-\epsilon |\omega|^2} e^{i \Phi(p_0) \cdot \omega}$ and integrate over $\omega \in \mathbb{C}$, we have by the dominated convergence theorem that the second line of (\ref{recovery princple cal improv part 4}) becomes
\begin{alignat}{2} 
\begin{split} \label{recovery principle cal improv part 5}
- \hspace*{0.3mm} \mathbf{1}_{D_r}(z_0) \sum_{\mu, \mu' \geq 0} \bar{z}_0^{\mu} z_{0}^{\mu'} \int_{\mathbb{C}} e^{-\epsilon |\omega|^2 } e^{iz_0 \cdot \omega} \Big( \int_{\Omega_j} \chi_j e^{-iz_j \cdot \omega} R(\chi_j' V \bar{a}_{\mu} |g_{j}|^{1/2} ) \tilde{\rho} T^{\star} ( V_1 a_{\mu'} ) \hspace*{0.5mm} dz_j d\bar{z}_j \Big) \hspace*{0.5mm} d\omega d\bar{\omega}.
\end{split}
\end{alignat}
By the inequalities of H\"older and Sobolev, we can estimate
\begin{alignat}{2}
\begin{split} \label{this is L^2}
& \|  R(\chi_j' V \bar{a}_{\mu} |g_{j}|^{1/2} ) \chi_{j} \tilde{\rho} T^{\star}( V_1 a_{\mu'} ) \|_{L^2(\Omega_{j})} \\[1mm]
& \hspace*{1cm} \lesssim \| R(\chi_{j}' V \bar{a}_{\mu} |g_{j}|^{1/2} ) \|_{W^{1,4/3}(\Omega_j)} \| \chi_j T^{\star}( V_1 a_{\mu'} )  \|_{W^{1,4/3}(\Omega_j)} \\[1mm]
& \hspace*{1cm} \lesssim  \|\bar{a}_{\mu} \|_{L^{\infty}(M_0)} \| a_{\mu'} \|_{L^{\infty}(M_0)} \| V \|_{L^{4/3}(M_0)} \| V_1 \|_{L^{4/3}(M_0)}.
\end{split}
\end{alignat} 
Thus we may argue as in (\ref{estimate fourier recover 1}) via the Plancherel theorem and Fourier inversion to see that there exists a constant $C>0$ independent of $\epsilon >0$ and functions of order $o_{L^2,\epsilon}(1)$ such that
\begin{alignat*}{2}
\Big\| \int_{\mathbb{C}} & e^{-\epsilon |\omega|^2} e^{i z_0 \cdot \omega}  \Big( \int_{M} \chi_j'  \bar{a} V R( \chi_j e^{- i \Phi \cdot \omega} \tilde{\rho} T^{\star}V_1 a ) \hspace*{0.5mm} d\text{v}_{g} \Big) \hspace*{0.5mm} d\omega d\bar{\omega} \\
&  + \sum_{\mu, \mu' \geq 0} \bar{z}_{0}^{\mu} z_{0}^{\mu'}  [{\chi_{j} R( \chi_{j}' V \bar{a}_{\mu} |g_{j}|^{1/2} ) \tilde{\rho} T^{\star}( V_1 a_{\mu'} ) ]}_{|_{z_{j}=z_0}}  \Big\|_{L^{2}(D_r)}  \lesssim \sum_{\mu,\mu' \geq 0} r^{\mu + \mu'} \| o_{L^{2},\epsilon}(1) \|_{L^{2}(\mathbb{C})}  \\
& \hspace*{1cm} \lesssim \sum_{\mu, \mu' \geq 0} r^{\mu + \mu'} \sup_{\omega \in \mathbb{C}} | e^{-\epsilon |\omega|^2} - 1 | \| \mathcal{F}[ \chi_j R( \chi_j' V \bar{a}_{\mu} |g_j|^{1/2} ) \tilde{\rho} T^{\star}(V_1 a_{\mu'})  ] \|_{L^{2}(\mathbb{C})} \leq C < \infty,
\end{alignat*}
where $o_{L^2,\epsilon}(1) \rightarrow 0$ as $\epsilon \rightarrow 0$ in $L^2$. Summing over all $j \geq 0$ and taking the limit as $\epsilon \rightarrow 0$ in the above, we have form (\ref{recovery princple cal improv part 4}) that
\begin{alignat}{2}
\begin{split} \label{yet another convergence}
\lim_{\epsilon \rightarrow 0 } \sum_{j \geq 0 } \int_{\mathbb{C}} & e^{-\epsilon |\omega|^2} e^{i z_0 \cdot \omega}  \Big( \int_{M}  \chi_j'  \bar{a} V R( \chi_j e^{- i \Phi \cdot \omega} \tilde{\rho} T^{\star}V_1 a ) \hspace*{0.5mm} d\text{v}_{g} \Big) \hspace*{0.5mm} d\omega d\bar{\omega} \\
 & \hspace*{0.5cm} = - \mathbf{1}_{D_{r}}(z_0) \sum_{j \geq 0} \sum_{\mu, \mu' \geq 0} \bar{z}_0^{\mu} z_0^{\mu'} [{\chi_j R(\chi_j' V \bar{a}_{\mu} |g_{j}|^{1/2} ) \tilde{\rho} T^{\star}( V_1 a_{\mu'} )}]_{|_{z_{j}=z_0}}
\end{split}
\end{alignat}
in $L^2(M)$. The right hand side of (\ref{yet another convergence}) lives in $L^2(D_r)$ by estimate (\ref{this is L^2}) and (\ref{estimate for Taylor expansion}). Thus we have arrived at the required limit. The last line in (\ref{recovery princple cal improv part 4}) involving smoothing operators can be taken cared of in the same way, and the obvious modification to the argument works for the term which contains $\tilde{I}_{0,1}$.
\end{proof}

Finally we can again write 
\begin{alignat}{2} \label{recovery remainder cal improv part 1}
\begin{split}
& \mathbf{1}_{\Omega_0}(p_0) \int_{|\omega| \geq |\omega_0|}  \rho(\omega) e^{-\epsilon |\omega|^2 } e^{i \Phi(p_0) \cdot \omega} \tilde{I}_{1,0} \hspace*{0.5mm} d\omega d\bar{\omega} \\
&  =   \mathbf{1}_{\Omega_0}(p_0) \int_{\mathbb{C}} e^{-\epsilon |\omega|^2} e^{i \Phi(p_0) \cdot \omega} \tilde{I}_{1,0} \hspace*{0.5mm} d\omega  d\bar{\omega} -  \mathbf{1}_{\Omega_0}(p_0) \int_{|\omega| < |\omega_0|} (1-\rho)(\omega) e^{-\epsilon |\omega|^2} e^{i \Phi(p_0) \cdot \omega} \tilde{I}_{1,0} \hspace*{0.5mm} d\omega  d\bar{\omega}.
\end{split}
\end{alignat}
Notice from (\ref{global splitting}) that $\tilde{I}_{1,0}$ depends smoothly on $p_0 \in \Omega_0$. Thus we can apply Lemma \ref{remainder lemma improve} to the first term above and argue as in (\ref{smoother term in principle improv}) for the second shows that (\ref{recovery remainder cal improv part 1}) converges to a limit in $L^2(M)$. A similar calculation works for the case of $\tilde{I}_{0,1}$. \par 
It remains to show that the limit
\begin{alignat}{2} \label{final convergence}
\lim_{\epsilon \rightarrow 0} \mathbf{1}_{\Omega_0}(p_0) \sum_{k+k' \geq 2} \int_{|\omega| \geq |\omega_0|} \rho(\omega) e^{-\epsilon |\omega|^2} e^{i \Phi(p_0) \cdot \omega} \tilde{I}_{k,k'} \hspace*{0.5mm} d\omega d\bar{\omega}
\end{alignat}
exists in $L^{2}(M_0)$. For this we will make use of the Carleman estimates proved in Section 3. For the cases where $k=0$ or $k'=0$ we formulate the following result.
\begin{lemma} \label{lower order terms improv lemma}
There exists constants $\tilde{C}, C>0$ independent of $\lambda$ and $p_0$ such that
\begin{alignat}{2} 
\begin{split} \label{lower order term improv cal}
& |\tilde{I}_{k,0}^{\mu,\mu'}|  \leq \tilde{C}  \Big( \frac{C \| V_1 \|_{L^{p}(M_0)}}{|\omega|^{0+}} \Big)^{k-2} \frac{\| a_{\mu} \|_{L^{\infty}(M_0)} \| \bar{a}_{\mu'} \|_{L^{\infty}(M_0)} }{|\omega|^{1+}} \ \ \text{and} \\
 & \hspace*{0.42cm} |\tilde{I}_{0,k'}^{\mu,\mu'}|  \leq \tilde{C} \Big( \frac{C \| V_2 \|_{L^{p}(M_0)}}{|\omega|^{0+}} \Big)^{k'-2} \frac{ \| a_{\mu} \|_{L^{\infty}(M_0)} \| \bar{a}_{\mu'} \|_{L^{\infty}(M_0)} }{|\omega|^{1+}}
\end{split}
\end{alignat}
for all $k, k' \geq 2$.
\end{lemma}
\begin{proof}
It is sufficiently to note from Corollary \ref{Carleman estimate corollary} and H\"older's inequalities that
\begin{alignat*}{2}
\int_{M_0} |V| & |\tilde{u}_{k}^{(\mu)}| |a_{\mu'}| \hspace*{0.5mm} d\text{v}_{g} \lesssim  \| \bar{a}_{\mu'} V \|_{L^p} \| T_{\Phi} \tilde{\rho} T^{\star}( V_1 T_{\Phi} \tilde{\rho} T^{\star} \tilde{u}_{k-2}^{(\mu)} V_1 ) \|_{L^{q}} \\
& \lesssim \frac{\| \bar{a}_{\mu'} \|_{L^{\infty}} \| V \|_{L^p} \| V_1 \|_{L^p} \| T_{\Phi} \tilde{\rho} T^{\star} \tilde{u}_{k-2}^{(\mu)} V_1  \|_{L^{\infty}} }{|\omega|} \lesssim \frac{\| \bar{a}_{\mu'} \|_{L^{\infty}} \| V \|_{L^{p}} \| V_1 \|_{L^{p}}^2 \| \tilde{u}_{k-2}^{(\mu)} \|_{L^{\infty}} }{|\omega|^{1+}}
\end{alignat*}
where $q$ is the H\"older conjugate of $p$. Now by iterating the $L^{p} \rightarrow L^{\infty}$ estimate obtained in (\ref{Carleman estimate for the Laplacian}), we easily see that there exists a constant $C>0$ such that
\begin{alignat}{2} \label{iterative estimate}
\| \tilde{u}_{k}^{(\mu)} \|_{L^{\infty}} \leq \Big( \frac{C \| V_1 \|_{L^{p}}}{|\omega|^{0+}} \Big)^{k} \| a_{\mu} \|_{L^{\infty}}
\end{alignat}
for all $k \geq 0$. Putting the two estimates together we arrive at the required claim. A similar calculation works for the case of $\tilde{I}_{0,k'}^{\mu,\mu'}$.
\end{proof}

The remaining terms in (\ref{final convergence}) can be estimated in similar manners.
\begin{lemma} \label{high order terms improve cal}
There exists constant $\tilde{C}, C>0$ independent of $\lambda$ and $p_0$ such that
\begin{alignat}{2}
\begin{split}
|\tilde{I}_{k,k'}^{\mu,\mu'}|  \leq \tilde{C} \Big( \frac{C \max\{ \| V_1 \|_{L^p(M_0)}, \| V_2 \|_{L^{p}(M_0)} \}}{|\omega|^{0+}} \Big)^{k+k'-2} \frac{\| a_{\mu} \|_{L^{\infty}(M_0)} \| \bar{a}_{\mu'} \|_{L^{\infty}(M_0)}}{|\omega|^{1+}}
\end{split}
\end{alignat}
for all $k+k' \geq 2$ with $k, k' \neq 0$.
\end{lemma}
\begin{proof}
Applying again Corollary \ref{lower order terms improv lemma}, we see that if $1/p + 1/q = 1$, then
\begin{alignat*}{2}
\int_{M_0} |V| & |\tilde{u}_{k}^{(\mu)}| |\tilde{v}_{k'}^{(\mu')}| \hspace*{0.5mm} d\text{v}_{g} \leq \| V \|_{L^{p}} \| T_{\Phi} \tilde{\rho} T^{\star}(\tilde{u}_{k-1}^{(\mu)} V_1) \|_{L^{q}} \| \bar{T}_{\Phi} \tilde{\rho} \bar{T}^{\star}( \tilde{v}_{k'-1}^{(\mu')} V_2 ) \|_{L^{\infty}} \\
& \lesssim \frac{\| V \|_{L^{p}} \| \tilde{u}_{k-1}^{(\mu)} V_1 \|_{L^{p}} \| \tilde{v}_{k'-1}^{(\mu')} V_2 \|_{L^{p}} }{|\omega|^{1+}} \lesssim \| \tilde{u}_{k-1}^{(\mu)} \|_{L^{\infty}} \| \tilde{v}_{k'-1}^{(\mu')} \|_{L^{\infty}} \frac{\| V \|_{L^{p}} \| V_1 \|_{L^{p}} \| V_2 \|_{L^{p}} }{|\omega|^{1+}}.
\end{alignat*}
Now (\ref{iterative estimate}) yields the existence of a constant $C>0$ such that
\begin{alignat*}{2}
\| \tilde{u}_{k-1}^{(\mu)} \|_{L^{\infty}} \| \tilde{v}_{k'-1}^{(\mu')} \|_{L^{\infty}} \lesssim \Big( \frac{ C \max\{ \| V_1 \|_{L^{p}}, \| V_2 \|_{L^{p}} \} }{|\omega|^{0+}} \Big)^{k+k' - 2} \| a_{\mu} \|_{L^{\infty}} \| \bar{a}_{\mu'} \|_{L^{\infty}}
\end{alignat*}
which implies the claim.
\end{proof}

Combining the results of Lemma \ref{lower order terms improv lemma} and Lemma \ref{high order terms improve cal}, we see now that there exists constants $\tilde{C}, C>0$ such that whenever $k+k' \geq 2$, we have
\begin{alignat}{2} 
\begin{split} \label{improv total cal}
|\tilde{I}_{k,k'}^{\mu,\mu'}|  \leq \tilde{C} \Big( \frac{C \max\{ \| V_1 \|_{L^{p}(M_0)}, \| V_2 \|_{L^{p}(M_0)} \} }{|\omega|^{0+}} \Big)^{k+k' - 2} \frac{\| a_{\mu} \|_{L^{\infty}(M_0)} \| a_{\mu'} \|_{L^{\infty}(M_0)}}{|\omega|^{1+}}.
\end{split}
\end{alignat} 
Moreover, from (\ref{improv total cal}) we deduce that $\rho \tilde{I}_{k,k'}^{\mu,\mu'} \in L^2$ in the $\omega$ variable, so that by the Plancherel theorem we know $\mathcal{F}^{-1} \rho \tilde{I}_{k,k'}^{\mu,\mu'}$ exists in $L^2$. Since we have
\begin{alignat*}{2}
 \mathbf{1}_{\Omega_0}(p_0) \int_{|\omega| \geq |\omega_0|} & e^{-\epsilon |\omega|^2} e^{i \Phi(p_0) \cdot \omega} \rho(\omega) \tilde{I}_{k,k'} \hspace*{0.5mm} d\omega d\bar{\omega} \\
& \hspace*{0.58cm} =  \sum_{\mu, \mu' \geq 0} \mathbf{1}_{D_r}(z_0) z_{0}^{\mu} \bar{z}_{0}^{\mu'} \int_{|\omega| \geq |\omega_0 |} e^{-\epsilon |\omega|^2} e^{i z_0 \cdot \omega} \rho(\omega) \tilde{I}_{k,k'}^{\mu,\mu'} \hspace*{0.5mm} d\omega d\bar{\omega},
\end{alignat*}
it follows from (\ref{estimate for Taylor expansion}) that there exists $C'>0$ such that
\begin{alignat*}{2}
\Big\|  \int_{|\omega| \geq |\omega_0|} &  e^{-\epsilon |\omega|^2}  e^{i z_0 \cdot \omega}  \rho(\omega) \sum_{k+k' \geq 2} \tilde{I}_{k,k'} \hspace*{0.5mm} d\omega d\bar{\omega} - \sum_{k+k' \geq 2} \sum_{\mu, \mu' \geq 0} z_0^{\mu} \bar{z}_{0}^{\mu'} \mathcal{F}^{-1}  \rho \tilde{I}_{k,k'}^{\mu,\mu'} \hspace*{0.5mm} \Big\|_{L^2(D_r)} \\[1mm]
& \lesssim \sum_{k+k' \geq 2} \sum_{\mu, \mu' \geq 0} r^{\mu+\mu'} \| \mathcal{F}^{-1} \big( e^{-\epsilon |\omega|^2} - 1\big) \rho \tilde{I}_{k,k'}^{\mu,\mu'}  \|_{L^2(D_r)} \\
& \lesssim \sup_{\omega \in \mathbb{C}} \frac{ | e^{-\epsilon |\omega|^2} - 1 |}{|\omega|^{1+}} \sum_{k+k' \geq 2} \Big( \frac{C'}{|\omega|^{0+}}  \Big)^{k+k'-2} \sum_{\mu, \mu' \geq 0} r^{\mu+\mu'} \| a_{\mu} \|_{L^{\infty}(M_0)} \| a_{\mu'} \|_{L^{\infty}(M_0)}    < \infty
\end{alignat*}
for all $|\omega| > |\omega_0|$. In particular, the last line above is uniformly bounded in $\epsilon > 0$, so Fourier inversion yields
\begin{alignat*}{2}
\lim_{\epsilon \rightarrow 0} \mathbf{1}_{\Omega_0}(p_0) \int_{|\omega| \geq |\omega_0|} & e^{-\epsilon |\omega|^2} e^{i \Phi(p_0) \cdot \omega} \rho(\omega) \sum_{k+k' \geq 2} \tilde{I}_{k,k'} \hspace*{0.5mm} d\omega d\bar{\omega} = \sum_{k+k' \geq 2} \sum_{\mu, \mu' \geq 0} \mathbf{1}_{D_r}(z_0) z_0^{\mu} \bar{z}_{0}^{\mu'} \mathcal{F}^{-1}\rho \tilde{I}_{k,k'}^{\mu,\mu'}
\end{alignat*}
in $L^{2}(M)$ as expected. From (\ref{improv total cal}) we also have
\begin{alignat*}{2}
\Big\| & \sum_{k+k' \geq 2} \sum_{\mu, \mu' \geq 0} |z_0|^{\mu + \mu'} \mathcal{F}^{-1} \rho \tilde{I}_{k,k'}^{\mu,\mu'} \Big\|_{L^{2}(D_r)} \leq \sum_{k+k' \geq 2} \sum_{\mu, \mu' \geq 0} r^{\mu+\mu'} \| \rho \tilde{I}_{k,k'^{\mu,\mu'}} \|_{L^2} \\
& \leq \Big( \sum_{k+k' \geq 2} 2^{-k-k'+2} \Big) \sum_{\mu,\mu' \geq 0} \tilde{C} r^{\mu+\mu'} \| a_{\mu} \|_{L^{\infty}(M_0)} \| a_{\mu'} \|_{L^{\infty}(M_0)} \Big( \int_{|\omega| > |\omega_0|} \frac{|\rho(\omega)|^2}{|\omega|^{2+}} \hspace*{0.5mm} |d\omega d\bar{\omega}| \Big)^{1/2} 
\end{alignat*}
which is finite for sufficiently large $|\omega_0|$, hence we have arrived at the required convergence in (\ref{final convergence}). Putting everything together and noting that $L^{2}(M) \hookrightarrow L^{1}(M)$, we can take the $L^1(M)$ limit on both sides of (\ref{Improvement expansion}) to deduce that
\begin{alignat*}{2}
\mathbf{1}_{\Omega_0} V |g|^{1/2} \in L^{2}(M).
\end{alignat*}
Since $|g|^{1/2}$ is non-vanishing, this in particular implies that $V$ is $L^2$ on $\Omega_0$ and so we have arrived at the required claim.
\end{proof}

\section{Identification of the Potential}
In this final section we prove Theorem \ref{main theorem}. The procedure will be similar to what was done in Section 4, with the key difference being that we now have $V \in L^{2}(M_0)$. As in section 4 it suffices to show
\begin{proposition}
Any point in $M_0$ admits an open neighbourhood $\Omega_0 \subset M$ such that $V = 0$ almost everywhere on $\Omega_0$.
\end{proposition}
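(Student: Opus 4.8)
The argument follows Section~4, with the linear phase $\Phi\cdot\omega$ replaced by the quadratic phase $\Psi$, so I only indicate the differences. Fix an arbitrary $\tilde p_0\in M_0$, normalise $\Phi(\tilde p_0)=0$, and keep the auxiliary data $\{\tilde\Omega_{j'}',\tilde\chi_{j'},\tilde\chi_{j'}'\}$, the charts $\{\Omega_j,\chi_j,\chi_j'\}$ and $\Omega_0=\tilde\Omega_0$ of Subsection~2.3. Insert the solutions $u\in\text{Ker}(\Delta_g+V_1)$, $v\in\text{Ker}(\Delta_g+V_2)$ of the form (\ref{series expansion solutions}) from Proposition~\ref{existence of CGO solutions} into the Alessandrini identity (\ref{Alessndrini identity}); with $\psi=\text{Re}\,\Psi$ and $u_0=a$, $v_0=\bar a$ this reads
\[
0=\int_{M_0}e^{2i\psi\lambda}\,|a|^2\,V\,d\text{v}_{g}+\sum_{k+k'\ge 1}(-1)^{k+k'}\int_{M_0}e^{2i\psi\lambda}\,u_k\,V\,v_{k'}\,d\text{v}_{g}.
\]
Multiply both sides by $\lambda\,\mathbf{1}_{\Omega_0}(p_0)$ and let $\lambda\to\infty$, this time aiming at convergence in $L^2(M)$, which is legitimate since Theorem~\ref{improve regularity theorem} has already placed $V=V_1-V_2$ in $L^2_{loc}(M_0)$. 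Because $\psi(\,\cdot\,;p_0)$ vanishes at $p_0$ and, among points of $M_0$, has a nondegenerate critical point exactly at the $\Phi$-preimages of $\Phi(p_0)$, this is a genuine one-parameter method of stationary phase: no Gaussian regularisation in the parameter is needed, and the Fourier inversion theorem used in Section~4 is replaced by the $L^2$ method of stationary phase of \cite{L4/3calderon}.

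For the principal term, expand $a(p;p_0)=\sum_{\mu\ge 0}a_\mu(p)z_0^\mu$ by Lemma~\ref{Taylor expansion Lemma} under $\Phi_{|_{\Omega_0}}(p_0)=z_0$, localise by the partition of unity $\{\chi_j\}$ and pass to the coordinates $\Phi_{|_{\Omega_j}}(p)=z_j$, so that each local piece equals $z_0^\mu\bar z_0^{\mu'}$ times $\lambda\int e^{2i\lambda\text{Re}(z_j-z_0)^2}\big(a_\mu\bar a_{\mu'}V|g_j|^{1/2}\big)\,dz_jd\bar z_j$. The Fourier transform of $w\mapsto e^{2i\lambda\text{Re}\,w^2}$ is $c\lambda^{-1}e^{-i\,\text{Re}(\xi^2)/(8\lambda)}$, so $\lambda$ times it is bounded by $C$ uniformly in $\lambda$ and tends pointwise to $c$; hence $\lambda\int_{\mathbb C}e^{2i\lambda\text{Re}(z-z_0)^2}f(z)\,dz\,d\bar z\to cf(z_0)$ in $L^2(\mathbb C)$ for every compactly supported $f\in L^2$. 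Applying this with $f=a_\mu\bar a_{\mu'}V|g_j|^{1/2}\in L^2$, using (\ref{estimate for Taylor expansion}) to sum over $\mu,\mu'$ and over the finitely many charts, and observing that $a(p_0;p_0)=\tilde\chi_0'(p_0)=1$ by (\ref{condition for a}) whereas, by the delta property (\ref{delta property of a}), $a$ vanishes at every other $\Phi$-preimage of $\Phi(p_0)$ in $M_0$ so that those critical points contribute nothing to leading order, one concludes that the principal term converges in $L^2(M)$ to $c\,\mathbf{1}_{\Omega_0}\,V\,|g|^{1/2}$.

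For the remainders, the terms with $k+k'\ge 2$ are treated exactly as in Lemmas~\ref{lower order terms improv lemma}--\ref{high order terms improve cal}: inserting the Born series (\ref{choice of remainder}) and iterating the Carleman estimates of Corollary~\ref{Carleman estimate corollary} yields, termwise in the Taylor expansion, a bound $\lesssim\big(C\max\{\|V_1\|_{L^p},\|V_2\|_{L^p}\}/\lambda^{0+}\big)^{k+k'-2}\|a_\mu\|_{L^\infty}\|a_{\mu'}\|_{L^\infty}/\lambda^{1+}$, so after multiplication by $\lambda$ a factor $1/\lambda^{0+}\to 0$ survives, and the double series in $k+k'$ and in $\mu,\mu'$ converges by a geometric series together with (\ref{estimate for Taylor expansion}) once $\lambda$ is large. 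The delicate contributions are $(k,k')=(1,0)$ and $(0,1)$: here $u_1=T_\Psi\tilde\rho\big(T^\star V_1a-\sum_{j'}Q^+_{j',\epsilon}(p_0)b_{j'}\big)$ carries the holomorphic $(1,0)$-forms $b_{j'}$ of Lemma~\ref{special amplitude}, and the cut-off amplitudes $\{Q^\pm_{j',\epsilon}\}$ must be chosen so that $\lambda\,\mathbf{1}_{\Omega_0}(p_0)\int_{M_0}e^{2i\psi\lambda}u_1Vv_0\,d\text{v}_{g}$ and its barred companion converge in $L^2(M)$ — the $b_{j'}$ being designed to cancel, up to an $L^2$ error, the non-decaying part of $T_\Psi\tilde\rho T^\star(V_1a)$ produced by the critical point of $\psi$. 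This is the analogue of Lemma~\ref{remainder lemma improve} for the quadratic phase, and is where I expect the real work to lie. Granting it, passing to the limit $\lambda\to\infty$ in the weighted identity gives $c\,\mathbf{1}_{\Omega_0}\,V\,|g|^{1/2}=0$ in $L^2(M)$; since $|g|^{1/2}$ is nowhere zero, $V=0$ almost everywhere on $\Omega_0$. As $\tilde p_0\in M_0$ was arbitrary and $M_0$ is compact, $V\equiv 0$, which is Theorem~\ref{main theorem}.
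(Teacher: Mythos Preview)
Your outline matches the paper's Section~5 in structure and in most details; the principal-term analysis and the $k+k'\ge 2$ remainder philosophy are right. But there is a genuine gap precisely where you flag it, and one smaller inaccuracy worth noting.

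\medskip

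\textbf{The $(1,0)$ and $(0,1)$ terms.} You say ``Granting it, \ldots'', but this is the crux, and the outcome is \emph{not} that these terms tend to $0$. In the paper one applies Fubini to move the oscillatory factor onto the integrand of $T_\Psi$, localises via (\ref{global splitting}), and runs the $L^2$ stationary phase on each chart exactly as in the principal term. The upshot is a concrete nonzero limit (see (\ref{identification first convergence}), (\ref{second convergence}), (\ref{third convergence})): schematically, the $I_{1,0}$ piece on $\tilde\Omega_{j'}$ converges in $L^2(D_r)$ to
\[
-\sum_{\mu,\mu'}\bar z_0^\mu z_0^{\mu'}\big[R(V\bar a_\mu\tilde\chi_{j'}'|g_{j'}|^{1/2})\,\tilde\chi_{j'}\tilde\rho\,T^\star(V_1 a_{\mu'})\big]_{z_{j'}=z_0}
\;+\;Q^+_{j',\epsilon}(z_0)\sum_{\mu}\bar z_0^\mu\big[R(V\bar a_\mu\tilde\chi_{j'}'|g_{j'}|^{1/2})\big]_{z_{j'}=z_0},
\]
using the delta property (\ref{delta condition for b}) of the $b_k$'s to diagonalise the $k$-sum. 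The two summands cancel \emph{only} if $Q^+_{j',\epsilon}(z_0)$ equals $\sum_{\mu'}z_0^{\mu'}[T^\star(V_1a_{\mu'})]_{z_{j'}=z_0}$, which is merely in $L^4$, not smooth. Hence the paper fixes $Q^\pm_{j',\epsilon}$ to be smooth $L^2$-approximants of this function; the $\lambda\to\infty$ limit is then $o_{L^2,\epsilon}(1)$, not $0$, and the conclusion is the two-step
\[
\mathbf 1_{\Omega_0}V|g|^{1/2}=o_{L^2,\epsilon}(1)\quad\text{for every }\epsilon>0,\qquad\text{then }\epsilon\to 0.
\]
Your final line ``$c\,\mathbf 1_{\Omega_0}V|g|^{1/2}=0$'' skips this $\epsilon$-layer and is not what one actually obtains.

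\medskip

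\textbf{The $k+k'\ge 2$ terms.} Your plan to argue ``termwise in the Taylor expansion'' as in Section~4 does not transfer: there one had $\tilde u_j=\sum_\mu \tilde u_j^{(\mu)}z_0^\mu$ because $T_\Phi$ is independent of $p_0$, but here $T_\Psi$ depends on $p_0$ through $\psi(\,\cdot\,;p_0)$, and $u_1$ also carries $Q^+_{j',\epsilon}(p_0)$, which is not holomorphic in $p_0$. The paper instead uses that the Carleman constants in Corollary~\ref{Carleman estimate corollary} are uniform in $p_0\in\Omega_0$, bounds $|I_{k,k'}|\le \tilde C\lambda^{-1-}(C/\lambda^{0+})^{k+k'-4}$ directly (Lemma~\ref{identification lemma 1}), and passes to $L^2(\Omega_0)$ by the trivial estimate $\|\cdot\|_{L^2(\Omega_0)}\le |\Omega_0|^{1/2}\sup_{p_0}|\cdot|$. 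Note also that the extra $b_{j'}$-terms in $u_1$ shift the geometric exponent from $k+k'-2$ to $k+k'-4$, with the cases $2\le k+k'<4$ handled separately; your stated bound with exponent $k+k'-2$ would require $\|u_1\|_{L^\infty}\lesssim\lambda^{-0-}$, which is not available.
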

\begin{proof}
For an arbitrary point $\tilde{p}_0$ in $M_0$ we adopt the same convention introduced at the beginning of Section 4. We recall that this means we let $ \{ \tilde{\Omega}_{j'}, \tilde{\chi}_{j'}, \tilde{\chi}_{j'}' \}_{ 1 \leq j' \leq N} $ be constructed as in Subsection 2.3. The collection $\{ \Omega_j \}_{j \geq 0}$ defines an open covering of $M_0'$ in $M$ so that for each $j \geq 0$, the map $\Phi: \Omega_{j} \rightarrow \Phi(\Omega_{j})$ is biholomorphic. We let $\{ \chi_{j} \}_{j \geq 0}$ be a partition of unity subordinate to $\{ \Omega_{j} \}_{j \geq 0}$, and choose $ \{ \chi_{j}' \}_{j \geq 0}$ so for that each $j \geq 0$, $\chi_{j}'$ supported on a holomorphic coordinate neighbourhood of $\Omega_j$ with coordinate map $\Phi$ and is identically $1$ on the support of $\chi_j$. Lastly, $\tilde{M}$ is the complement of the union of $\{ \tilde{\Omega}_{j'} \}_{1 \leq j' \leq N}$ in $M$. 
\par By implementing solutions $u$ and $v$ from Proposition \ref{existence of CGO solutions} into (\ref{Alessndrini identity}), we have 
\begin{alignat}{2} \label{Main Theorem Expansion}
0 = \int_{M_0} u V v \hspace*{0.5mm} d\text{v}_{g} = \int_{M_{0}} e^{2i \psi \lambda} |a|^2 V \hspace*{0.5mm} d\text{v}_{g} + \sum_{k + k' \geq 1}  \int_{M_0} e^{2i \psi \lambda} u_{k} V v_{k'} \hspace*{0.5mm} d\text{v}_{g} .
\end{alignat}
Multiplying both sides of (\ref{Main Theorem Expansion}) by $2 \lambda \pi^{-1} \mathbf{1}_{\Omega_0}(p_0)$ and rearranging gives 
\begin{alignat}{2} \label{weighted main theorem expansion}
\frac{2 \lambda \mathbf{1}_{\Omega_0}(p_0)}{\pi} \int_{M_0} e^{2i \psi \lambda} |a|^2 V \hspace*{0.5mm} d\text{v}_{g} = - \frac{2 \lambda \mathbf{1}_{\Omega_0}(p_0)}{\pi}  \sum_{k + k' \geq  1} \int_{M_0} e^{2i \psi \lambda} u_{k} V v_{k'} \hspace*{0.5mm} d\text{v}_{g}.
\end{alignat}
In order to identity $V$ from this expression, we will exploit the following $L^{2}$-based method of stationary phase, the proof of which we recall from Lemma 3.3 in \cite{L2calderon}.
\begin{lemma}
Let $V$ be in $L^2(\mathbb{C})$ and $(V_{\delta})_{\delta > 0}$ be a smooth approximation of $V$ in $L^2(\mathbb{C})$, then for any $s \in [0,1]$ we have
\begin{alignat}{2} \label{stationary phase epislon estimate}
\| V - \frac{2\lambda}{\pi} e^{2i \text{Re} \hspace*{0.5mm} z^2 \lambda} \star V \|_{L^2} \lesssim \| V - V_{\delta} \|_{L^2} + \frac{\| V_{\delta} \|_{H^{s}}}{\lambda^{s/2}}.
\end{alignat}
In particular, we have 
\begin{alignat}{2} \label{stationary phase equation}
\lim_{\lambda \rightarrow \infty} \frac{2 \lambda}{\pi}  e^{2i \text{Re} \hspace*{0.5mm} z^2 \lambda} \star V = V
\end{alignat}
in $L^{2}(\mathbb{C})$.
\end{lemma}
\begin{proof}
By a standard calculation of the complex expoential and convolution theorem, we have
\begin{alignat*}{2}
\mathcal{F} \Big( \frac{2 \lambda}{\pi} e^{ \pm 2i \text{Re} \hspace*{0.5mm} z^2 \lambda} \star V \Big) = \mathcal{F}e^{\pm 2i \text{Re} \hspace*{0.5mm} z^2 \lambda} \mathcal{F}V =  e^{\mp \frac{i \text{Re} \hspace*{0.5mm} \zeta^{2}}{8 \lambda}} \mathcal{F}V.
\end{alignat*}
Using the Fourier-Plancherel Theorem, we have
\begin{alignat}{2} \label{stationary phase calculation 1}
\begin{split}
\| \hspace*{0.2mm} V - \frac{2 \lambda }{\pi} e^{\pm 2i \text{Re} \hspace*{0.5mm} z^{2} \lambda} & \star V \hspace*{0.2mm} \|_{L^{2}} = \|( 1- e^{\mp \frac{i \text{Re} \hspace*{0.5mm} \zeta^2}{8 \lambda}} ) \mathcal{F}V \|_{L^{2}} .
\end{split}
\end{alignat}
Now for all $s \in [0,1]$ we may estimate that $|1 -  e^{\mp 2i \text{Re} \hspace*{0.5mm} \zeta^2  }| \lesssim 2^{s/2} | \zeta |^s$. Indeed, if $|\zeta| \geq 1$, then it is easy to see that $|1- e^{\mp 2i \text{Re} \hspace*{0.5mm} \zeta^2}| \leq 2 \lesssim 2^{s/2} |\zeta|^{s}$ and so the result is obvious. On the other hand, if $|\zeta| \leq 1$, then a direct computation yields
\begin{alignat*}{2}
|1 - e^{\mp 2i \text{Re} \hspace*{0.5mm} \zeta^2 } |^2 = 4 |\sin( \xi^2 - \eta^2 )|^2,
\end{alignat*}
therefore we have that 
\begin{alignat*}{2}
|1- e^{\mp 2i \text{Re} \hspace*{0.5mm} \zeta^2}|^2 \lesssim | \xi^2 - \eta^2 |^2 \leq | \xi^2 + \eta^2 |^2 \leq 2^{s} |\zeta|^{2s},
\end{alignat*}
By combining the above inequalities, we can extend (\ref{stationary phase calculation 1}) to 
\begin{alignat}{2} \label{stationary phase calcuation 2}
\big\| \hspace*{0.2mm}  V - \frac{2\lambda}{\pi}   e^{2i \text{Re} z^2 \lambda} \star  V  \big\|_{L^{2}}  \lesssim  \frac{\|  (1+ |\zeta|^2)^{s} \mathcal{F} V \|_{L^2}}{\lambda^{s/2}} = \frac{\| V \|_{H^{s}}}{\lambda^{s/2}}.
\end{alignat}
Now let $\{ V_{\delta} \}_{\delta > 0}$ be a sequence of smooth functions in $H^{s}$ such that $\| V - V_{\delta} \|_{H^{s}} < \delta$. By inequality (\ref{stationary phase calcuation 2}) we now have
\begin{alignat*}{2}
\| (V- V_{\delta}) - \frac{2 \lambda }{\pi} e^{2i \text{Re} \hspace*{0.5mm} z^2 \lambda} \star (V- V_{\delta})  \|_{L^{2}} \lesssim \| V - V_{\delta} \|_{L^{2}} < \delta. 
\end{alignat*}
Triangle inequality now gives
\begin{alignat}{2} \label{stationary phase calculation 3}
\| V - & \frac{2 \lambda}{\pi} e^{2i \text{Re} \hspace*{0.5mm} z^{2} \lambda} \star V \|_{L^{2}}  \lesssim \| V_{\delta} - V \|_{L^{2}} + \| V_{\delta} - \frac{2\lambda}{\pi} e^{2i \text{Re} \hspace*{0.5mm} z^{2} \lambda} \star V_{\delta} \|_{L^{2}} < \delta + \frac{\| V_{\delta} \|_{H^{s}}}{\lambda^{s/2}}.
\end{alignat}
This proves (\ref{stationary phase epislon estimate}). Letting $\lambda \rightarrow \infty$ followed by $\delta \rightarrow 0$ and noticing the left hand side of (\ref{stationary phase calculation 3}) is independent of $\delta$ concludes the proof of (\ref{stationary phase equation}).
\end{proof}

Our strategy now follows in a similar way as Section 4. Using Lemma \ref{stationary phase equation}, the left hand side of (\ref{weighted main theorem expansion}) will converge to $\mathbf{1}_{\Omega_0} V |g|^{1/2}$ in $L^2(M)$ as $\lambda \rightarrow \infty$ while the right hand side vanishes in the limit. The same problem regarding the dependency of $a$ on both $p$ and $p_0$ remains, and we get over this issue with again the Taylor expansion of $a$ derived from Lemma \ref{Taylor expansion Lemma}. 
\subsection{Analysis of Principle Terms}
In this subsection we study the integral
\begin{alignat}{2} \label{principle integral} 
\frac{2 \lambda \mathbf{1}_{\Omega_0}(p_0)}{\pi} \int_{M_0} e^{2i \psi \lambda} |a|^2 V \hspace*{0.5mm} d\text{v}_{g}
\end{alignat}
from which we will recover the information of $V$ on $\Omega_0$. \par  Extending $V_1, V_2$ to $M$ by zero, we prove the following result analogous to Lemma \ref{principle term calculation improve lemma}. 
\begin{lemma} \label{stationary phase asym}
We have that 
\begin{alignat*}{2}
\lim_{\lambda \rightarrow 0}  \frac{2 \lambda \mathbf{1}_{\Omega_0}(p_0)}{\pi} \int_{M} e^{2i \psi \lambda} |a|^2 V \hspace*{0.5mm} d\text{\emph v}_{g} = \mathbf{1}_{\Omega_{0}} V |g|^{1/2}
\end{alignat*}
in $L^{2}(M)$.
\end{lemma}
\begin{proof}
We may write (\ref{principle integral}) as 
\begin{alignat}{2} \label{principle integral cal 1}
\begin{split}
& \frac{2 \lambda \mathbf{1}_{\Omega_0}(p_0)}{\pi} \int_{M} e^{2i \psi \lambda} |a|^2 V \hspace*{0.5mm} d\text{v}_{g} \\
& \hspace*{1cm} = \sum_{j' \geq 0} \frac{2 \lambda \mathbf{1}_{\Omega_0}(p_0)}{\pi} \int_{\tilde{\Omega}_{j' }} e^{2i \psi \lambda} |a|^2 V \hspace*{0.5mm} d\text{v}_{g} + \sum_{j \geq 0} \frac{2 \lambda \mathbf{1}_{\Omega_0}(p_0)}{\pi} \int_{\tilde{M}} \chi_{j} e^{2i \psi \lambda} |a|^2 V \hspace*{0.5mm} d\text{v}_{g}.
\end{split}
\end{alignat}
By making the change of variables $\Phi_{|_{\tilde{\Omega}_{j'}}}(p) = z_{j'}$, we can apply Lemma \ref{Taylor expansion Lemma} and the dominated convergence theorem to see that for every $1 \leq j' \leq N$, we have
\begin{alignat}{2} \label{principle integral cal 2}
\begin{split}
& \frac{2 \lambda \mathbf{1}_{\Omega_0}(p_0)}{\pi} \int_{\tilde{\Omega}_{j'}} e^{2i \psi \lambda} |a|^2 V \hspace*{0.5mm} d\text{v}_{g} \\
& \hspace*{1cm} = \frac{2 \lambda \mathbf{1}_{D_{r}}(z_0)}{\pi} \int_{D_r} e^{2i \text{Re} \hspace*{0.5mm} (z_{j'}-z_0)^2 \lambda} |a( \Phi_{|_{\tilde{\Omega}_{j'}}}^{-1}(z_{j'}); \Phi_{|_{\Omega_{0}}}^{-1}(z_{0}) )|^2 V( \Phi_{|_{\tilde{\Omega}_{j'}}}^{-1}(z_{j'}) ) \hspace*{0.5mm} |g_{j'}|^{1/2} dz_{j'} d\bar{z}_{j'} \\[1mm]
& \hspace*{2cm} = \sum_{\mu, \mu' \geq 0} \frac{ 2 \lambda \mathbf{1}_{D_r}(z_0) }{\pi} z_0^{\mu} \bar{z}_{0}^{\mu'} \int_{D_r} e^{2i \text{Re} \hspace*{0.5mm} (z_{j'}-z_0)^{2} \lambda} \\
& \hspace*{5.2cm} \times a_{\mu}( \Phi_{|_{\tilde{\Omega}_{j'}}}^{-1}(z_{j'}) )  \bar{a}_{\mu'}( \Phi_{|_{\tilde{\Omega}_{j'}}}^{-1}(z_{j'}) ) V( \Phi_{|_{\tilde{\Omega}_{j'}}}^{-1}(z_{j'}) ) \hspace*{0.5mm} |g_{j'}|^{1/2} dz_{j'} d\bar{z}_{j'}. 
\end{split}
\end{alignat}
We want to take a $L^{2}(M)$ limit as $\lambda \rightarrow \infty$ in the last sum of (\ref{principle integral cal 2}). For this we note that since $V$ is in $L^{2}(M_0)$, we have from inequality (\ref{solution estimates}), (\ref{stationary phase epislon estimate}) and (\ref{stationary phase equation}) the estimate
\begin{alignat*}{2} 
&\Big\| \frac{2\lambda}{\pi} \int_{D_r}  e^{2i \text{Re} \hspace*{0.5mm} (z_{j'} - z_0)^2 \lambda} |a( \Phi_{|_{\tilde{\Omega}_{j'}}}^{-1}(z_{j'}); \Phi_{|_{\Omega_{0}}}^{-1}(z_0) )|^2 V( \Phi_{|_{\tilde{\Omega}_{j'}}}^{-1}(z_{j'}) ) \hspace*{0.5mm} |g_{j'}|^{1/2} dz_{j'} d\bar{z}_{j'} \\[1mm]
& \hspace*{2.6cm} -\sum_{\mu, \mu' \geq 0}  z_{0}^{\mu} \bar{z}_{0}^{\mu'} [a_{\mu}( \Phi_{|_{\tilde{\Omega}_{j'}}}^{-1}(z_{j'}) ) \bar{a}_{\mu'}( \Phi_{|_{\tilde{\Omega}_{j'}}}^{-1}(z_{j'}) ) V( \Phi_{|_{\tilde{\Omega}_{j'}}}^{-1}(z_{j'}) ) |g_{j'}|^{1/2}]_{|_{z_{j'} = z_0}} \hspace*{0.5mm} \Big\|_{L^{2}(D_r)} \\
& \leq \sum_{\mu,\mu' \geq 0} r^{\mu + \mu'} \| o_{L^2,\lambda}^{(\mu,\mu')}(1) \|_{L^2}  \lesssim \sum_{\mu,\mu' \geq 0} r^{\mu+\mu'} \| a_{\mu} \|_{L^{\infty}(M_0)} \| a_{\mu'} \|_{L^{\infty}(M_0)} \| V \|_{L^2} \lesssim \| V \|_{L^2(M_0)} < \infty,
\end{alignat*}
where $o_{L^2, \lambda}^{(\mu,\mu')} \rightarrow 0$ as $\lambda \rightarrow \infty$ in $L^{2}$ depending on $\mu,\mu' \geq 0$. It follows from (\ref{principle integral cal 2}) that the above calculation implies
\begin{alignat}{2}
\begin{split} \label{identification cal}
\lim_{\lambda \rightarrow \infty} &\frac{2 \lambda  \mathbf{1}_{\Omega_0}(p_0)}{\pi}  \int_{\tilde{\Omega}_{j'}} e^{2i \psi \lambda} |a|^2 V \hspace*{0.5mm} d\text{v}_{g} \\
& \hspace*{0.5cm} = 1_{D_r}(z_0) [|a( \Phi_{|_{\tilde{\Omega}_{j'}}}^{-1}(z_{j'}) ; \Phi_{|_{\Omega_0}}^{-1}(z_0) )|^2 V( \Phi_{|_{\tilde{\Omega}_{j'}}}^{-1}(z_{j'}) ) |g_{j'}|^{1/2}]_{z_{j'} = z_0}
\end{split}
\end{alignat}
in $L^2(M)$ for every $1 \leq j' \leq N$. Thanks to (\ref{delta property of a}), summing over all such $j'$ in (\ref{identification cal}) yields
\begin{alignat}{2} \label{correct asymp}
\lim_{\lambda \rightarrow \infty} \frac{2 \lambda \mathbf{1}_{\Omega_0}(p_0)}{\pi} \sum_{j' \geq 0} \int_{\tilde{\Omega}_{j'}} e^{2i \psi \lambda}|a|^2 V \hspace*{0.5mm} d\text{v}_{g} = \mathbf{1}_{\Omega_0} V |g|^{1/2}
\end{alignat}
which is the required asymptotic. Now since $\tilde{M}$ contains no point $p \in M_0$ for which we have $\Phi(p) \in D_{r}$, we must have $\Phi(\tilde{M}  \cap  M_0 )$ and $D_r$ are disjoint. Thus it is easy to see from the same arguments as above that
\begin{alignat}{2}
\begin{split} \label{principle integral cal 4}
\lim_{\lambda \rightarrow \infty} \frac{2 \lambda  \mathbf{1}_{\Omega_0}(p_0)}{\pi} & \int_{\tilde{M}} \chi_{j} e^{2i \psi \lambda} |a|^2 V \hspace*{0.5mm} d\text{v}_{g} = \sum_{\mu, \mu' \geq 0} \mathbf{1}_{D_r} \mathbf{1}_{\Phi ( \tilde{M} \hspace*{0.5mm} \cap \hspace*{0.5mm} M_0 \hspace*{0.5mm} \cap \hspace*{0.5mm} \Omega_{j} ) } z_0^{\mu} \bar{z}_{0}^{\mu'} \\
& \times [\chi_{j}( \Phi_{|_{\Omega_j}}^{-1}(z_j) )a_{\mu}( \Phi_{|_{\Omega_{j}}}^{-1}(z_{j}) ) \bar{a}_{\mu'}(\Phi_{|_{\Omega_{j}}}^{-1}(z_{j})) V(\Phi_{|_{\Omega_{j}}}^{-1}(z_{j})) |g_{j}|^{1/2}]_{z_{j}=z_0} = 0
\end{split}
\end{alignat}
for each $j \geq 1$. Combining (\ref{principle integral cal 1}) with (\ref{correct asymp}) and (\ref{principle integral cal 4}) concludes the proof of the claim.
\end{proof}

\subsection{Analysis of Remainder Terms} In this subsection we show that
\begin{alignat*}{2}
\lim_{\lambda \rightarrow \infty} \frac{2 \lambda \mathbf{1}_{\Omega_0}(p_0)}{\pi} \sum_{k+k' \geq 1} I_{k,k'} = o_{L^2, \epsilon}(1)
\end{alignat*}
where $o_{L^2, \epsilon}(1) \rightarrow 0$ as $\epsilon \rightarrow 0$ in $L^2(M)$. As in Section 4, additional arguments are required for the lower order terms. Nevertheless, for the cases $k+k' = 1$ we need to argue more carefully since we now require these terms to be of order $o_{L^2,\epsilon}(1)$. For this we will make use of the construction of $\{ b_{j'} \}_{1 \leq j' \leq N}$ introduced in Lemma \ref{special amplitude}.
\begin{lemma}
We can choose sequences $ \{ Q^{+}_{j', \epsilon} \}_{1 \leq j' \leq N} $ and $ \{ Q^{-}_{j', \epsilon} \}_{1 \leq j' \leq N} $ so that
\begin{alignat*}{2}
\lim_{\lambda \rightarrow \infty} \frac{2 \lambda \mathbf{1}_{\Omega_0}(p_0)}{\pi} I_{1,0} = \lim_{\lambda \rightarrow \infty}  \frac{2 \lambda \mathbf{1}_{\Omega_0}(p_0)}{\pi} I_{0,1} = o_{L^2, \epsilon}(1)
\end{alignat*}
where $o_{L^2, \epsilon}(1) \rightarrow 0$ as $\epsilon \rightarrow 0$ in $L^2(M)$.
\end{lemma}
\begin{proof}
Extending $V_1, V_2$ to $M$, since $\tilde{\rho}$ is compactly supported in $M_0'$, we can write $I_{1,0}$ as
\begin{alignat}{2}
\begin{split} \label{first split lower order identification}
I_{1,0} = \sum_{1 \leq j' \leq N} \int_{M} V & \bar{a} T \mathbf{1}_{\tilde{\Omega}_{j'}} e^{2i \psi \lambda} \tilde{\rho} \Big( T^{\star}V_1 a - \sum_{0 \leq k \leq N} Q^{+}_{k, \epsilon}(p_0) b_{k} \Big) \hspace*{0.5mm} d\text{v}_{g}   \\ & 
+  \sum_{j \geq 0} \int_{M} V \bar{a} T \mathbf{1}_{\tilde{M}} \chi_{j} e^{2i \lambda \psi} \tilde{\rho} \Big( T^{\star}V_1 a - \sum_{0 \leq k \leq N} Q^{+}_{k, \epsilon}(p_0) b_{k} \Big) \hspace*{0.5mm} d\text{v}_g.
\end{split}
\end{alignat}
Since $\tilde{\chi}_j$ is identically $1$ on $\tilde{\Omega}_{j'}$, for each $0 \leq j' \leq N$ we have that 
\begin{alignat}{2} \label{two differences}
\begin{split}
 & \hspace*{-6.7cm} \int_{M} V \bar{a} T \mathbf{1}_{\tilde{\Omega}_{j'}} e^{2i \psi \lambda} \tilde{\rho} \Big( T^{\star}V_1 a - \sum_{0 \leq k \leq N} Q^{+}_{k,\epsilon}(p_0) b_{k} \Big) \hspace*{0.5mm} d\text{v}_{g}  \\
=   \int_{M} V \bar{a}  \tilde{\chi}_{j'}' R( \mathbf{1}_{\tilde{\Omega}_{j'}}  e^{2i \psi \lambda} \tilde{\chi}_{j'} \tilde{\rho} T^{\star} V_1 a )  \hspace*{0.5mm} d\text{v}_{g} & - \sum_{0 \leq k \leq N} Q^{+}_{k,\epsilon}(p_0) \int_{M} V \bar{a} \tilde{\chi}_{j'}' R ( \mathbf{1}_{\tilde{\Omega}_{j'}} e^{2i \psi \lambda} \tilde{\chi}_{j'} \tilde{\rho} b_{k}) \hspace*{0.5mm} d\text{v}_g  \\
 +  \int_{M} V\bar{a} K_{j'}( \mathbf{1}_{\tilde{\Omega}_{j'}} e^{2i \psi \lambda} \tilde{\chi}_{j'} \tilde{\rho} T^{\star}V_1 a) \hspace*{0.5mm} d\text{v}_g & - \sum_{0 \leq k \leq N} Q^{+}_{k,\epsilon}(p_0) \int_{M} V \bar{a}  K_{j'}( \mathbf{1}_{\tilde{\Omega}_{j'}}  e^{2i \psi \lambda} \tilde{\chi}_{j'} \tilde{\rho} b_{k}) \hspace*{0.5mm} d\text{v}_{g}.
\end{split}
\end{alignat}
We analyse the two differences in (\ref{two differences}). By making the change of variables $\Phi_{|_{\tilde{\Omega}_{j'}}}(p) = z_{j'}$ and Fubini's theorem, for each $j'$ we have
\begin{alignat*}{2}
\mathbf{1}_{\Omega_0}(p_0) & \int_{M} V \bar{a} \tilde{\chi}_j' R( \mathbf{1}_{\tilde{\Omega}_{j'}} e^{2i \psi \lambda} \tilde{\chi}_{j'} \tilde{\rho} T^{\star} V_1 a ) d\text{v}_g \\
& = - \mathbf{1}_{D_r}(z_0) \int_{D_r} R(V \bar{a} \tilde{\chi}_{j'}' |g_{j'}|^{1/2}) e^{2i \text{Re} \hspace*{0.5mm} (z_{j'} - z_0)^2 \lambda } \tilde{\chi}_{j'} \tilde{\rho} T^{\star} (V_1 a) \hspace*{0.5mm} dz_{j'} d\bar{z}_{j'} \\
& = - \sum_{\mu,\mu' \geq 0} \mathbf{1}_{D_r}(z_0) \bar{z}_{0}^{\mu} z_0^{\mu'} \int_{D_r} R(V \bar{a}_{\mu} \tilde{\chi}_{j'}' |g_{j'}|^{1/2}  )  e^{2i \text{Re} \hspace*{0.5mm} (z_{j'} - z_0)^2 \lambda}\tilde{\chi}_{j'} \tilde{\rho} T^{\star}(V_1 a_{\mu'}) \hspace*{0.5mm} dz_{j'} d\bar{z}_{j'},
\end{alignat*}
where the final expansion in the above can be justified with Lemma \ref{Taylor expansion Lemma} and Sobolev embeddings. We also made the identification $\tilde{\chi}_{j'} T^{\star}(V_1a) = \tilde{\chi}_{j'} T^{\star}(V_1 a) dz_{j'} $ on local charts. Recall from (\ref{this is L^2}) that
\begin{alignat*}{2}
\| R(V \bar{a}_{\mu} \tilde{\chi}_{j'}' |g_{j'}|^{1/2} ) \tilde{\chi}_{j'} \tilde{\rho} T^{\star}(V_1 a_{\mu'}) \|_{L^2(D_r)} \lesssim \| a_{\mu} \|_{L^{\infty}(M_0)} \| a_{\mu'} \|_{L^{\infty}(M_0)} \| V \|_{L^{4/3}(M_0)} \| V_1 \|_{L^{4/3}(M_0)}
\end{alignat*}
can be bounded independently of $\mu, \mu' \geq 0$. Thus we can apply Lemma \ref{stationary phase asym} to find functions of order $o_{L^2,\lambda}^{(\mu,\mu')}(1)$ so that
\begin{alignat}{2} \label{identification first convergence estimate}
\begin{split}
\Big\| & \frac{2 \lambda}{\pi}  \int_{M} V \bar{a} \tilde{\chi}_{j'}' R( \mathbf{1}_{D_r} e^{2i \text{Re} \hspace*{0.5mm} (z_{j'} - z_0)^2 \lambda} \tilde{\chi}_{j'} \tilde{\rho} T^{\star} V_1 a) \hspace*{0.5mm} d\text{v}_{g} \\
& \hspace*{2cm} + \sum_{\mu,\mu' \geq 0} \bar{z}_0^{\mu} z_{0}^{\mu'} {[R(V \bar{a}_{\mu} \tilde{\chi}_{j'}' |g_{j'}|^{1/2} ) \tilde{\chi}_{j'} \tilde{\rho} T^{\star}(V_1 a_{\mu'})]}_{|_{z_{j'} = z_0}} \hspace*{0.5mm} \Big\|_{L^2(D_r)} \\[2mm]
& \lesssim \sum_{\mu,\mu' \geq 0} r^{\mu+\mu'} \| o_{L^2, \lambda}^{(\mu,\mu')}(1) \|_{L^{2}(D_r)} \\
& \hspace*{1.8cm} \lesssim \Big( \sum_{\mu, \mu' \geq 0} r^{\mu+\mu'} \| a \|_{L^{\infty}(M_0)} \| a_{\mu'} \|_{L^{\infty}(M_0)} \Big)  \| V \|_{L^{4/3}(M_0)} \| V_1 \|_{L^{4/3}(M_0)} < \infty
\end{split}
\end{alignat}
In particular, (\ref{identification first convergence estimate}) implies that
\begin{alignat}{2} 
\begin{split} \label{identification first convergence}
\lim_{\lambda \rightarrow \infty} \frac{2 \lambda \mathbf{1}_{\Omega_0}(p_0)}{\pi} & \sum_{1 \leq j' \leq N}  \int_{M} V \bar{a} \tilde{\chi}_{j'}' R( \mathbf{1}_{\tilde{\Omega}_{j'}} e^{2i \psi \lambda } \tilde{\chi}_{j'} \tilde{\rho} T^{\star}V_1 a ) \hspace*{0.5mm} d\text{v}_{g} \\
& = - \sum_{1 \leq j' \leq N} \sum_{\mu, \mu' \geq 0} \mathbf{1}_{D_r}(z_0)  \bar{z}_0^{\mu} z_0^{\mu'} [R( V \bar{a}_{\mu} \tilde{\chi}_{j'}' |g_{j'}|^{1/2} ) \tilde{\chi}_{j'} \tilde{\rho} T^{\star}(V_1 a_{\mu'})]_{|_{z_{j‘} = z_0}}
\end{split}
\end{alignat}
in $L^2(M)$. On the other hand, on the support of $\tilde{\chi}_{j'}$ we may exploit the Taylor expansion of $b$ introduced in Lemma \ref{Taylor expansion Lemma} so that for every $p_0 \in \Omega_0$, we have
\begin{alignat*}{2}
\tilde{\chi}_{j'}(p) b_{k}(p;p_0) = \sum_{\mu \geq 0} \tilde{\chi}_{j'}(z_{j'}) b_{k,\mu}^{(j')}(z_{j'}) z_0^{\mu} dz_{j'} \ \ \text{for all} \ \  1 \leq j',k \leq N.
\end{alignat*}
By (\ref{estimate for Taylor expansion}) such an expansion satisfies the same convergence property of $a$, thus we have
\begin{alignat*}{2} 
\mathbf{1}_{\Omega_0}(p_0) & Q^{+}_{k,\epsilon}(p_0) \int_{M} V \bar{a}  \tilde{\chi}_{j'}' R( \mathbf{1}_{\tilde{\Omega}_{j'}} e^{2i \psi \lambda} \tilde{\chi}_{j'} \tilde{\rho} b_{k}) \hspace*{0.5mm} d\text{v}_g \\
& = - \sum_{\mu, \mu' \geq 0} \mathbf{1}_{D_r}(z_0) Q^{+}_{k}(z_0) \bar{z}_{0}^{\mu} z_{0}^{\mu'} \int_{D_r} R(V \bar{a}_{\mu} \tilde{\chi}_{j'}' |g_{j'}|^{1/2})  e^{2i (z_{j'}-z_0)^2 \lambda} \tilde{\chi}_{j'} \tilde{\rho} b_{k,\mu'}^{(j')} \hspace*{0.5mm} dz_{j'} d\bar{z}_{j'} 
\end{alignat*}
for each $1 \leq j',k \leq N$. Now arguing in exactly the same way as in (\ref{identification first convergence}), it is easy to see that
\begin{alignat}{2} \label{identification second convergence} 
\begin{split}
& \lim_{\lambda \rightarrow \infty}  \frac{2\lambda \mathbf{1}_{\Omega_0}(p_0) Q^{+}_{k,\epsilon}(p_0)}{\pi} \int_{M} V \bar{a} \tilde{\chi}_{j'}' R( \mathbf{1}_{\tilde{\Omega}_{j'}} e^{2i \psi \lambda} \tilde{\chi}_{j'} \tilde{\rho} b_{k}) \hspace*{0.5mm} \hspace*{0.5mm} d\text{v}_g \\
&  =  - \sum_{\mu \geq 0} \sum_{\mu' \geq 0} \mathbf{1}_{D_r}(z_0) Q^{+}_{k,\epsilon}(z_0)  \bar{z}_{0}^{\mu} z_{0}^{\mu'} [R(V \bar{a}_{\mu} \tilde{\chi}_{j’}' |g_{j‘}|^{1/2}) ]_{|_{z_{j’} = z_0}} ( \tilde{\chi}_{j'} \tilde{\rho} )(\Phi_{|_{\tilde{\Omega}_{j'}}}^{-1}(z_0)) b_{k,\mu'}^{(j')}( \Phi_{|_{\tilde{\Omega}_{j'}}}^{-1}(z_0) ) \\[1mm]
& = - \sum_{\mu \geq 0}  \mathbf{1}_{D_r}(z_0) Q^{+}_{k,\epsilon}(z_0)  \bar{z}_0^{\mu} [R(V \bar{a}_{\mu} \tilde{\chi}_{j'}' |g_{j'}|^{1/2}  ) ]_{|_{z_{j'} = z_0}} ( \tilde{\chi}_{j'} \tilde{\rho})(\Phi_{|_{\tilde{\Omega}_{j'}}}^{-1}(z_0)) b_{k}^{(j')}( \Phi_{|_{\tilde{\Omega}_{j'}}}^{-1}(z_0) ; \Phi_{|_{\Omega_0}}^{-1}(z_0)  )
\end{split}
\end{alignat}
in $L^2(M)$. By formula (\ref{locla condition for b}) we obviously have
\begin{alignat}{2} \label{delta condition for b}
 b^{(j')}_{k}( \Phi_{|_{\tilde{\Omega}_{j'}}}^{-1}(z_0) ; \Phi_{|_{\Omega_0}}^{-1}(z_0) )  =
\begin{cases}
1 \ \ \text{if} \ \ j' = k, \ \ \text{and} \\
0 \ \ \text{if} \ \ j' \neq k
\end{cases}
\end{alignat}
for all $z_0 \in D_r$ and $1 \leq j',k \leq N$. Summing over all such $j',k$ we see from (\ref{delta condition for b}) that
\begin{alignat}{2} 
\begin{split} \label{second convergence}
\lim_{\lambda \rightarrow \infty} \sum_{1 \leq j',k \leq N} & \frac{2 \lambda \mathbf{1}_{\Omega_0}(p_0) Q^{+}_{k,\epsilon}(p_0)}{\pi} \int_{M} V \bar{a} \tilde{\chi}_{j'}' R( \mathbf{1}_{\tilde{\Omega}_{j'}}e^{2i \psi \lambda} \tilde{\chi}_{j'}  b_{k} ) \hspace*{0.5mm} d\text{v}_g \\
& = -  \sum_{1 \leq j' \leq N} \sum_{\mu \geq 0} \mathbf{1}_{D_r}(z_0) Q^{+}_{j',\epsilon}(z_0) ( \tilde{\chi}_{j'} \tilde{\rho} )(\Phi_{|_{\tilde{\Omega}_{j'}}}^{-1}(z_0))  \bar{z}_0^{\mu} [R( V \bar{a}_{\mu}  \tilde{\chi}_{j'}' |g_{j'}|^{1/2} )]_{|_{z_{j'} = z_0}}.
\end{split}
\end{alignat}
To take care of the smoothing terms in (\ref{two differences}), we apply the exact same procedure to get that
\begin{alignat}{2} \label{third convergence}
\begin{split}
& \lim_{\lambda \rightarrow \infty} \frac{2 \lambda \mathbf{1}_{\Omega_0}(p_0) }{\pi} \sum_{1 \leq j' \leq N}  \int_{M} V \bar{a} K_{j'} ( \mathbf{1}_{\tilde{\Omega}_{j'}}  e^{2i \psi \lambda} \tilde{\chi}_{j'} \tilde{\rho}  T^{\star} V_1 a) \hspace*{0.5mm} d\text{v}_{g} \\
& \hspace*{3cm} = \sum_{1 \leq j' \leq N} \sum_{\mu,\mu' \geq 0} \mathbf{1}_{D_r}(z_0) \bar{z}_{0}^{\mu} z_{0}^{\mu'} [K_{j'}^{'}(V \bar{a}_{\mu} ) \tilde{\chi}_{j'} \tilde{\rho} T^{\star}(V_1 a_{\mu'}) |g_{j'}|^{1/2}]_{|_{z_{j'} = z_0}},  \\
& \lim_{\lambda \rightarrow \infty} \frac{2 \lambda \mathbf{1}_{\Omega_0}(p_0)}{\pi} \sum_{1 \leq j', k \leq N} Q^{+}_{k,\epsilon}(p_0) \int_{M} V \bar{a} K_{j'}( \mathbf{1}_{\tilde{\Omega}_{j'}} \tilde{\chi}_{j'} e^{2i \psi \lambda} \tilde{\rho} b_{k} ) \hspace*{0.5mm} d\text{v}_{g} \\
& \hspace*{3cm} = \sum_{1 \leq j' \leq N} \sum_{\mu \geq 0} \mathbf{1}_{D_r}(z_0)  Q^{+}_{j',\epsilon}(z_0) \bar{z}_{0}^{\mu} [  K_{j'}'(V \bar{a}_{\mu}) \tilde{\chi}_{j'} \tilde{\rho} |g_{j'}|^{1/2}]_{|_{z_{j'}=z_0}}
\end{split}
\end{alignat}
in $L^{2}(M)$ where $K_{j'}'$ has smooth kernel. By Sobolov's inequality, we have
\begin{alignat*}{2}
\sum_{\mu' \geq 0} \| z_{0}^{\mu'} [T^{\star}(V_1a_{\mu'})]_{|_{z_{j'} = z_0}} \|_{L^4} \lesssim \sum_{\mu' \geq 0} r^{\mu'} \| T^{\star}(V_1 a_{\mu'}) \|_{W^{1. 4/3}} \lesssim \sum_{\mu' \geq 0} \| V_1 \|_{L^{4/3}} r^{\mu'} \| a_{\mu'} \|_{L^{\infty}}  < \infty.
\end{alignat*}
Since $\{ Q^{+}_{j',\epsilon} \}_{1 \leq j' \leq N} \subset \mathcal{C}^{\infty}_{c}(M)$ were fixed arbitrarily, putting (\ref{identification first convergence}), (\ref{second convergence}) and (\ref{third convergence}) together, we see from (\ref{two differences}) that if we choose them to be smooth approximations such that
\begin{alignat*}{2}
\lim_{\epsilon \rightarrow 0} \mathbf{1}_{D_r}(z_0) Q^{+}_{j', \epsilon}(z_0) = \sum_{ \mu' \geq 0 } \mathbf{1}_{D_r}(z_0) z_0^{\mu'} [  T^{\star}(V_1 a_{\mu'}) ]_{z_{j'} = z_0}, \ \ 1 \leq j' \leq N
\end{alignat*}
in $L^2(M)$, then we have 
\begin{alignat*}{2}
\lim_{\lambda \rightarrow \infty}  \sum_{1 \leq j' \leq N} \int_{M} V \bar{a} T \mathbf{1}_{\Omega_{j'}'} e^{2i \psi \lambda} \tilde{\rho} T^{\star}(V_1 a) \hspace*{0.5mm} d\text{v}_{g} = o_{L^2, \epsilon}(1)
\end{alignat*}
where $\lim_{\epsilon \rightarrow 0} o_{L^2, \epsilon}(1) = 0$ in $L^{2}(M)$. By construction, $\tilde{M}$ contains no point $p \in M_0$ for which we have $\Phi(p) \in D_r$. By taking $M_0'$ small enough, we may assume without loss of generality that $\text{Supp} \hspace*{0.5mm} \tilde{\rho}$ is disjoint from those neighbourhood $\Omega \subset M \backslash M_0$ such that $\Phi(\Omega) \cap D_r \neq \emptyset$. Thus $ \Phi( \tilde{M} \hspace*{0.5mm} \cap \hspace*{0.5mm} \text{Supp} \hspace*{0.5mm} \tilde{\rho}) $ and $D_r$ are disjoint, and the same procedure yields
\begin{alignat}{2} \label{final vanishing integral}
\lim_{\lambda \rightarrow \infty} \int_{M} V \bar{a} T \mathbf{1}_{\tilde{M}} \chi_{j} e^{2i \psi \lambda} \tilde{\rho} \Big( T^{\star}V_1 a  - \sum_{1 \leq k \leq N} Q^{+}_{k,\epsilon}(p_0) b_{k} \Big) \hspace*{0.5mm} d\text{v}_{g} = 0, \ \ j \geq 0.
\end{alignat}
Indeed, it is sufficient to split $T\chi_{j}$ into linear combinations of $\chi_{j}'R \chi_{j'}$ and $K_{j}$ and apply Lemma \ref{weighted main theorem expansion} and Fubini's theorem as before, at which point the expression $\mathbf{1}_{D_r} \mathbf{1}_{ \Phi ( \tilde{M} \hspace*{0.5mm} \cap \hspace*{0.5mm} \text{Supp} \hspace*{0.5mm} \tilde{\rho} \hspace*{0.5mm} \cap \hspace*{0.5mm} \Omega_j) }$ appears in the resulting limits, and we conclude from the remarks above that (\ref{final vanishing integral}) vanishes. The claim for the case of $I_{1,0}$ now follows from expression (\ref{first split lower order identification}). The obvious modifications holds for the case of $I_{0,1}$.
\end{proof}

Finally we show that
\begin{alignat}{2} \label{integral identity 100}
\lim_{\lambda \rightarrow \infty} \frac{2 \lambda \mathbf{1}_{\Omega_0}(p_0)}{\pi} \sum_{k+k' \geq 2} I_{k,k'} = 0
\end{alignat}
in $L^2(M)$. For this we rely only on the Carleman estimates derived in Section 3. 
\begin{lemma} \label{identification lemma 1}
There exists constants $C, \tilde{C} > 0$ independent of $\lambda$ and $p_0$ such that 
\begin{alignat}{2} \label{identification lemma equation 1}
\begin{split}
| & I_{k,k'} |  \leq \frac{\tilde{C}}{\lambda^{1+}} \Big( \frac{C \max\{ \| V_1 \|_{L^{p}(M_0)}, \| V_2 \|_{L^{p}(M_0)} \} }{\lambda^{0+}} \Big)^{k+k' - 4}
\end{split}
\end{alignat}
for all $k+k' \geq 4$. If $2 \leq k+k' < 4$, then we have
\begin{alignat}{2} \label{identification lemma equation 2}
|I_{k,k'}| \leq \frac{\tilde{C}}{\lambda^{1+}}.
\end{alignat}
\end{lemma}
\begin{proof}
Let $p'$ be defined by $ 1/p' = 1/p - 1/2 $. Set $1/q = 1/p + 1/p' = 2/p -1/2$. Since $ p \in {]}4/3, 2{[} $ we can choose $r \in {]}2, 4{[}$ by $1/r = 1/p - 1/4$ so that $1/2 + 1/r \geq 1/q > 1/2$. Hence if $k' = 0$, then by Corollary \ref{Carleman estimate corollary} we have
\begin{alignat*}{2}
|I_{k,0}| \leq \int_{M_0} |V||u_{k}| & |a| \hspace*{0.5mm} d\text{v}_{g} \lesssim \| a V \|_{L^{2}} \| T_{\Psi} \tilde{\rho} T^{\star}( V_1 u_{k-1} ) \|_{L^{r}} \\
& \lesssim \frac{ \| a \|_{L^{\infty}} \| V \|_{L^2} \| V_1 u_{k-1} \|_{L^{q}} }{\lambda^{1 - ( \frac{1}{q} - \frac{1}{r} ) }} \leq  \frac{ \| a \|_{L^{\infty}} \| V \|_{L^2} \| V_1 \|_{L^{p}} \| u_{k-1} \|_{L^{p'}}  }{\lambda^{ 1 - ( \frac{1}{q} - \frac{1}{r} )  }}.
\end{alignat*}
If $k=2$, then we can directly estimate from Sobolev embedding and (\ref{dbar estimate}) that
\begin{alignat}{2} \label{middle terms identification estimate 1}
\begin{split}
\| u_{k-1} \|_{L^{p'}} & \lesssim \| T_{\Psi} \tilde{\rho} T^{\star}( V_1 a ) \|_{L^{p'}} + \sum_{1 \leq j' \leq N} \|Q^{+}_{j',\epsilon}\|_{L^{\infty}} \| T_{\Psi} \tilde{\rho} b_{j'} \|_{L^{p'}} \\
& \hspace*{1cm} \lesssim  \frac{ \| a \|_{L^{\infty}} \| V_1 \|_{L^{p}} }{\lambda^{1 - (\frac{1}{p} - \frac{1}{p'}) }} + \frac{ \max_{1 \leq j' \leq N} \{ \| Q^{+}_{j',\epsilon} \|_{L^{\infty}} \| b_{j'} \|_{L^{\infty}} \} }{ \lambda^{ 1 - (\frac{1}{p} - \frac{1}{p'}) } }. 
\end{split}
\end{alignat}
Alternatively if $k \geq 3$, then we iterate the $L^{p} \rightarrow L^{\infty}$ estimate obtained in (\ref{Carleman estimate for the Laplacian}) to get that there exists $C >0$ such that
\begin{alignat}{2} \label{middle terms identification estimate 2}
\begin{split}
& \|  u_{k-1} \|_{L^{p'}} \leq \frac{\| V_1 \|_{L^{p}} \| u_{k-2} \|_{L^{\infty}} }{\lambda^{1- ( \frac{1}{p} - \frac{1}{p'} ) }} \\
& \lesssim   \frac{\| a \|_{L^{\infty}} \| V_1 \|_{L^{p}}}{\lambda^{1 - ( \frac{1}{p} - \frac{1}{p'} )}}  \Big( \frac{C \| V_1 \|_{L^{p}} }{\lambda^{0+}} \Big)^{k-2} + \frac{ \max_{1 \leq j' \leq N} \{ \| Q^{+}_{j', \epsilon} \|_{L^{\infty}} \| b_{j'} \|_{L^{\infty}} \}}{\lambda^{ 1 - ( \frac{1}{p} - \frac{1}{p'} ) }} \Big( \frac{ C \| V_1 \|_{L^{p}}}{\lambda^{0+}} \Big)^{k-3}.
\end{split}
\end{alignat}
A similar calculation works for the case $k=0$. \par 
In the cases where $k, k' \neq 0$, we can apply H\"older's inequality to see that 
\begin{alignat*}{2} \begin{split}
\int_{M_0} |V| |u_{k} | |v_{k'}| \hspace*{0.5mm} d\text{v}_{g} & \lesssim \| V \|_{L^2} \| T_{\Psi} \tilde{\rho} T^{\star}(V_1 u_{k-1}) \|_{L^4} \| \bar{T}_{\Psi} \bar{T}^{\star}( V_2 v_{k'-1} ) \|_{L^{4}} \\
& \lesssim \frac{\| V \|_{L^{2}} \| V_1 \|_{L^{p}} \| V_2 \|_{L^{p}}}{\lambda^{1+}} \| u_{k-1} \|_{L^{\infty}} \| v_{k'-1} \|_{L^{\infty}}.
\end{split}
\end{alignat*}
As above we can now enumerate the $L^{p} \rightarrow L^{\infty}$ estimates to get that
\begin{alignat}{2} \label{middle terms identification estimate 3}
\begin{split} 
 \| u_{k-1} \|_{L^{\infty}}   \lesssim \| a \|_{L^{\infty}} \| V_1 \|_{L^{p}}  \Big( \frac{C \| V_1 \|_{L^{p}} }{\lambda^{0+}} \Big)^{k-1} & +  \max_{1 \leq j' \leq N} \{ \| Q^{+}_{j', \epsilon} \|_{L^{\infty}} \| b_{j'} \|_{L^{\infty}} \} \Big( \frac{ C \| V_1 \|_{L^{p}}}{\lambda^{0+}} \Big)^{k-2},  \\
 \| v_{k'-1} \|_{L^{\infty}}  \lesssim  \| a \|_{L^{\infty}} \| V_2 \|_{L^{p}} \Big( \frac{C \| V_2 \|_{L^{p}} }{\lambda^{0+}} \Big)^{k'-1} &+ \max_{1 \leq j' \leq N} \{ \| Q^{-}_{j', \epsilon} \|_{L^{\infty}} \| b_{j'} \|_{L^{\infty}} \} \Big( \frac{ C \| V_2 \|_{L^{p}}}{\lambda^{0+}} \Big)^{k'-2}
\end{split}
\end{alignat}
for all $k,k' \geq 2 $. In particular, putting together estimates (\ref{middle terms identification estimate 2}) and (\ref{middle terms identification estimate 3}) implies (\ref{identification lemma equation 1}) for sufficiently large $\lambda$. On the other hand, if $k = k' = 1$, then
\begin{alignat}{2} \label{middle terms identification estimate 4}
\begin{split}
\int_{M_0} |V| |u_{1}| |v_{1}| \hspace*{0.5mm} d\text{v}_{g} \leq \frac{\| V \|_{L^2}}{\lambda^{1+}} & \Big( \| V_1 \|_{L^p} \| a \|_{L^{\infty}} + \max_{1 \leq j' \leq N} \| Q^{+}_{j', \epsilon} \|_{L^{\infty}} \| b_{j'} \|_{L^{\infty}} \Big) \\
& \hspace*{1cm} \times \Big( \| V_2 \|_{L^{p}}\| a \|_{L^{\infty}} + \max_{1 \leq j' leq N} \| Q^{-}_{j', \epsilon} \|_{L^{\infty}} \| b_{j'} \|_{L^{\infty}}\Big).
\end{split}
\end{alignat}
Combining (\ref{middle terms identification estimate 1}), (\ref{middle terms identification estimate 4}) and the other estimates now implies (\ref{identification lemma equation 2}).
\end{proof}

By applying the estimates in Lemma \ref{identification lemma 1}, we easily see that
\begin{alignat}{2}  \label{final estimate}
\begin{split} 
\Big| \frac{2 \lambda \mathbf{1}_{\Omega_0}(p_0)}{\pi}  \sum_{k+k' \geq 2} I_{k,k'} \Big|  \lesssim \frac{\tilde{C}}{\lambda^{0+}} + \frac{\tilde{C}}{\lambda^{0+}} \sum_{k+k' \geq 4} \Big( \frac{C \max\{ \| V_1 \|_{L^{p}(M_0)}, \| V_2 \|_{L^{p}(M_0)} \} }{\lambda^{0+}} \Big)^{k+k'-4}.
\end{split}
\end{alignat}
The last term in (\ref{final estimate}) converges for sufficiently large $\lambda > \lambda_0$ and is in particular of order $o(1)$ as $\lambda \rightarrow \infty$. Since $C$ and $\tilde{C}$ are independent of $p_0$, we can take this limit in (\ref{weighted main theorem expansion}) and deduce from the sequence of lemmas proved above that
\begin{alignat*}{2}
\mathbf{1}_{\Omega_0} V |g|^{1/2} = o_{L^2, \epsilon}(1)
\end{alignat*}
for all $\epsilon > 0$. Letting $\epsilon \rightarrow 0$ now yields $\mathbf{1}_{\Omega_0} V |g|^{1/2} = 0$. Since $|g|$ is non-vanishing we must therefore have $V = 0$ almost everywhere on $\Omega_0$. This concludes the proof of the claim.
\end{proof}

\medskip

\noindent {\bf Acknowledgments.} This work is completed as part of the author's HDR degree while under the support of A/Prof Tzou's projects ARC DP190103451 and ARC DP190103302. The author is grateful for the supervision he has received. 
\medskip

 \end{document}